\newcommand{\be}{\begin{enumerate}}
\newcommand{\ee}{\end{enumerate}}
\newcommand{\beq}{\begin{equation}}
\newcommand{\eeq}{\end{equation}}
\title{Groups acting freely on $\Lambda$-trees}
\author{\textsf{Olga Kharlampovich}\thanks{Supported by NSERC grant}
\and \textsf{Alexei Myasnikov}\thanks{Supported by NSERC grant}
\and \textsf{Denis Serbin}}
\date{\textsf{September 23, 2011}}
\newtheorem{cor}{Corollary}
\newtheorem{proposition}{Proposition}
\newtheorem{theorem}{Theorem}
\newtheorem{lemma}{Lemma}
\newtheorem{definition}{Definition}
\newtheorem{remark}{Remark}
\newtheorem{con}{Conjecture}
\begin{document}

\maketitle

\begin{abstract}
A group is called $\Lambda$-free if it has a free Lyndon length function in an ordered abelian group
$\Lambda$, which is equivalent to having a free isometric action on a $\Lambda$-tree. A group has a
regular free length function in $\Lambda$ if and only if it has a free isometric action on a
$\Lambda$-tree so that all branch points belong to the orbit of the base point. In this paper we prove
that every finitely presented $\Lambda$-free group $G$ can be embedded into a finitely presented group
with a regular free length function in $\Lambda$ so that the length function on $G$ is preserved by
the embedding. Next, we prove that every finitely presented group $\widetilde G$ with a regular free
Lyndon length function in $\Lambda$ has a regular free Lyndon length function in ${\mathbb R}^n$
ordered lexicographically for an appropriate $n$ and can be obtained from a free group by a series
of finitely many HNN-extensions in which associated subgroups are maximal abelian and length
isomorphic.

\end{abstract}

\section{Introduction}
\label{sec:1}

This is the fourth paper in a series, where we investigate non-Archimedean group actions, length
functions and infinite words. In \cite{L} Lyndon introduced real-valued length functions as a tool
to carry over Nielsen cancelation theory from free groups to a more general setting (see also
\cite{LS}).  Some results in this direction were obtained in \cite{Hoare1,Hoare2,Harrison,Prom, AM}.
In \cite{Ch} Chiswell described a crucial construction which shows that a group with a real-valued
length function has an action on an $\mathbb{R}$-tree, and vice versa. Later, Morgan and Shalen
realized that a similar construction holds for an arbitrary group with a Lyndon length function which
takes values in an arbitrary ordered abelian group $\Lambda$ (see \cite{MS}). In particular, they
introduced $\Lambda$-trees as a natural generalization of $\mathbb{R}$-trees which they studied in
relation with Thurston's Geometrization Program. Thus, actions on $\Lambda$-trees and Lyndon length
functions with values in $\Lambda$ are two equivalent languages describing the same class of groups.
In the case when the action is free (the stabilizer of every point is trivial) we call groups in this
class $\Lambda$-free. We refer to the book \cite{Ch1} for a detailed discussion on the subject.

One of the major events in combinatorial group theory in 1970's was the development of Bass-Serre
theory. We refer to the book \cite{Serre}, where Serre laid down fundamentals of the theory of
groups acting freely on simplicial trees. In particular, Bass-Serre theory makes it possible to
extract information about the structure of a group from its action on a simplicial tree.
Alperin and Bass \cite{AB} developed the initial framework of the theory of group actions on
$\Lambda$-trees and stated the fundamental research goals: find the group theoretic information
carried by an action (by isometries) on a $\Lambda$-tree; generalize Bass-Serre theory to actions
on arbitrary $\Lambda$-trees.

A joint effort of several researchers culminated in a description of finitely generated groups acting
freely on $\mathbb{R}$-trees \cite{BF,GLP}, which is now known as Rips' theorem: a finitely
generated group acts freely on an $\mathbb{R}$-tree if and only if it is a free product of free
abelian groups and surface groups (with an exception of non-orientable surfaces of genus $1, 2$, and
$3$). The key ingredient of this theory is the so-called ``Rips machine'', the idea of which comes
from Makanin's algorithm for solving equations in free groups (see \cite{Mak}). The Rips machine
appears in applications as a general tool that takes a sequence of isometric actions of a group $G$
on some ``negatively curved spaces'' and produces an isometric action of $G$ on an $\mathbb{R}$-tree
as the Gromov-Hausdorff limit of the sequence of spaces. Free actions on $\mathbb{R}$-trees cover
all Archimedean actions, since every group acting freely on a $\Lambda$-tree for an Archimedean ordered
abelian group $\Lambda$ also acts freely on an $\mathbb{R}$-tree.

In the non-Archimedean case there are only partial results for particular choices of $\Lambda$.
First of all, in \cite{B} Bass studied finitely generated groups acting freely on $\Lambda_0 \oplus
\mathbb{Z}$-trees with respect to the right lexicographic order on $\Lambda_0 \oplus \mathbb{Z}$. In
this case it was shown that the group acting freely on a $\Lambda_0 \oplus \mathbb{Z}$-tree splits
into a graph of groups with $\Lambda_0$-free vertex groups and maximal abelian edge groups. Next,
Guirardel (see \cite{G}) obtained the structure of finitely generated groups acting freely on
$\mathbb{R}^n$-trees (with the lexicographic order). In \cite{KMRS1} the authors described the
class of finitely generated groups acting freely and regularly on $\mathbb{Z}^n$-trees in terms of
HNN-extensions of a very particular type. The importance of regular actions becomes clear from the
results of \cite{KMS2}, where we proved that a finitely generated group acting freely on a
$\mathbb{Z}^n$-tree is a subgroup of a finitely generated group acting freely and regularly on a
$\mathbb{Z}^m$-tree for $m \geqslant n$, and the paper \cite{Ch2}, where is was shown that a group
acting freely on a $\Lambda$-tree (for arbitrary $\Lambda$) can always be embedded in a
length-preserving way into a group acting freely and regularly on a $\Lambda$-tree.

In this paper we give a partial solution (for finitely presented groups) of the following main
problem of the Alperin-Bass program.

\medskip

{\bf Problem.}
{\it Describe finitely presented (finitely generated) $\Lambda$-free groups for an arbitrary ordered
abelian group $\Lambda$.}

\medskip

Nowadays, the geometric method of group actions seems much more in use  than the length functions,
even though the Nielsen theory still  gives very powerful results (see the discussion in
\cite{KMRS1}). There are several reasons for this. Firstly, the Lyndon's abstract axiomatic approach
to length functions is less intuitive than group actions. Secondly, the current development of the
abstract Nielsen theory is incomplete and insufficient --- one of the principal notions of complete
actions and regular length functions were not in use until recently. Thirdly, and this is crucial,
an abstract analog of the Rips machine for length functions with values in $\mathbb{R}$ (or an
arbitrary $\Lambda$) was not introduced or developed enough to be used easily in applications.
Notice, that in the case of $\Lambda = \mathbb{Z}$ the completeness and regularity come for free,
so there was no need  for axiomatic formalization and it went mostly unnoticed. The regularity
axiom  appeared first in \cite{MR,MRS} as a tool to deal with length functions in $\mathbb{Z}^n$
(with respect to the lexicographic order). On the other hand, an analog of the Rips machine for
$\Lambda = \mathbb{Z}$ did exist for a long time --- the original Makanin-Razborov process
\cite{Raz} for solving equations in free groups. But it was not recognized as such until
\cite{KMIrc,KM2}, where it was used systematically to get splittings of groups.

Introduction of infinite $\Lambda$-words in \cite{MRS} gives another approach to group actions.
Every group admitting a faithful representation by $\Lambda$-words acts freely on a $\Lambda$-tree.
The converse is proved in \cite{Ch}. Formulation of the regularity axiom is another important
ingredient. This axiom ensures that the group is complete with respect to the Gromov's inner
product, that is, the Gromov's product of two elements of the group can be realized as the length
of a particular element of the group (not only as an abstract length). In the language of actions
this means that all branch points belong to the orbit of the base point. This allows one to use
the Nielsen cancelation argument, very similar to the case of a free group. The regularity
(completeness) condition is crucial for the existence of Makanin-Razborov type processes over
groups with $\Lambda$-length functions.

In this paper we prove the following results.

\begin{theorem}
\label{th:main1}
Any finitely presented regular $\Lambda$-free group $G$  can be represented as a union of a finite
series of groups
$$G_1 < G_2 < \cdots < G_n = G,$$
where
\begin{enumerate}
\item $G_1$ is a free group,
\item $G_{i+1}$ is obtained from $G_i$ by finitely many HNN-extensions in which associated subgroups
are maximal abelian and the associated isomorphisms preserve the length induced from $G_i$.
\end{enumerate}
\end{theorem}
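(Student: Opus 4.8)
The plan is to argue by induction on the height of the value group (the number of proper convex subgroups of $\Lambda$), after first replacing $\Lambda$ by a tractable value group of finite height. Since $G$ is finitely presented it is finitely generated, so the image of its length function $\ell\colon G\to\Lambda$ generates a finitely generated ordered abelian subgroup $\Lambda'\leqslant\Lambda$; restricting to $\Lambda'$ leaves the length function on $G$ unchanged and produces a group of finite rank, which (using the reduction to $\mathbb{R}^n$ recorded in the companion statement) I may take to be $\mathbb{Z}^n$ or $\mathbb{R}^n$ ordered lexicographically. Thus I would assume from the outset that $\Lambda$ has finite height $h$ and induct on $h$. The base case $h=1$ is Archimedean: a finitely generated group with a regular free length function in an Archimedean $\Lambda$ is free, and this free group serves as $G_1$, starting the series.

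For the inductive step, let $\Lambda_0$ be the maximal proper convex subgroup of $\Lambda$, so that $\Lambda/\Lambda_0$ is Archimedean and $\Lambda_0$ has height $h-1$. The idea is to read off a graph-of-groups decomposition of $G$ from the ``top level'' of its action: collapsing the $\Lambda_0$-subtrees of the $\Lambda$-tree $T$ yields an action of $G$ on a tree over $\Lambda/\Lambda_0$, while the stabilizers of the collapsed subtrees act freely on $\Lambda_0$-trees. Invoking the Bass-type splitting for free actions on $\Lambda_0\oplus\mathbb{Z}$-trees (cf. \cite{B}), $G$ becomes the fundamental group of a finite graph of groups whose vertex groups act freely on $\Lambda_0$-trees and whose edge groups are maximal abelian. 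The next step is to verify that this decomposition carries all the structure needed to recurse: the vertex groups are finitely presented (the graph is finite and $G$ is finitely presented) and, crucially, they inherit \emph{regular} free length functions in $\Lambda_0$, induced from $G$.

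Here the regularity hypothesis does the essential work, and this is where I expect the main obstacle to lie. First, regularity of the $G$-action must be transferred to the vertex groups so that the induction hypothesis genuinely applies; this amounts to showing that the branch points of each $\Lambda_0$-subtree lie in the orbit of the base point under the corresponding vertex stabilizer. Second, and more delicately, regularity should force the graph of groups to collapse to a single vertex group $H$ together with a family of HNN-edges, so that $G$ is obtained from $H$ by finitely many HNN-extensions rather than by a general graph of groups; the associated subgroups of these HNN-extensions are precisely the maximal abelian edge groups, and the associated isomorphisms preserve length because they are realized by elements of $\Lambda$ acting on $T$. Controlling these associated subgroups --- establishing maximality and the length-preservation of the conjugating isomorphisms --- through Nielsen-type cancelation arguments on infinite $\Lambda$-words is the technical heart of the argument.

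Finally, I would assemble the series. By induction the vertex group $H$ is a union $G_1<\cdots<G_k=H$ of the required form, and appending the finitely many HNN-extensions produced at the top level gives the final stage $G_{k+1}=G$, so that $G_1<\cdots<G_{k}<G_{k+1}=G$ has $G_1$ free and each step a finite collection of HNN-extensions with maximal abelian, length-preserving associated subgroups. The series is finite because each stage strictly lowers the height of the relevant value group, terminating at the free group $G_1$.
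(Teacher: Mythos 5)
Your proposal has two genuine gaps, and the first is a circularity. The opening reduction --- replacing $\Lambda$ by a finite-height (indeed lexicographic $\mathbb{R}^n$ or $\mathbb{Z}^n$) value group --- cannot be made ``from the outset.'' The subgroup of $\Lambda$ generated by the image $\ell(G)$ is generated by the infinitely many values $\ell(g)$, $g\in G$, and there is no a priori reason it is finitely generated; finite generation of the generating set of $G$ does not bound the Gromov products $c(g,h)$ that arise for arbitrary pairs. And you cannot instead invoke ``the companion statement'' (Theorem \ref{th:main3}, $\mathbb{R}^n$-freeness), because in this paper that theorem is \emph{derived from} the terminal generalized equation $\Omega_{v_{fin}}$ produced by the proof of Theorem \ref{th:main1}: the proof of Theorem \ref{th:main3} redefines the lengths of the bases of that equation inside $\mathbb{R}^k$, level by level. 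Using it as input to Theorem \ref{th:main1} is circular, and indeed the related $\mathbb{Z}^k$ statement is recorded in the paper as an open conjecture --- a sign that no cheap reduction of the value group is available.

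The second gap is in the inductive step. Bass's splitting theorem \cite{B} applies to free actions on $\Lambda_0\oplus\mathbb{Z}$-trees, i.e.\ it requires the top quotient to be discrete ($\mathbb{Z}$), not merely Archimedean. Once the reduction of the first paragraph fails, $\Lambda/\Lambda_0$ is only a subgroup of $\mathbb{R}$, and free actions with $\mathbb{R}$ on top require Rips-machine/Guirardel-type stability arguments \cite{G} --- machinery your sketch names but does not supply; notably, the paper itself has to establish superstability of the induced action on the collapsed $\mathbb{R}$-tree $\widehat T$, and it does so only in Section 5, after Theorem \ref{th:main1} is proved. Moreover, everything you flag as ``the technical heart'' --- converting the graph of groups into HNN-extensions only, showing the associated subgroups are maximal abelian and \emph{length}-isomorphic, and transferring regularity to the vertex groups so the induction applies --- is precisely what the paper's actual proof delivers and your proposal defers without argument. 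The paper avoids your geometric decomposition altogether: it encodes the finite presentation into a generalized equation over infinite $\Lambda$-words and runs the elimination (Makanin--Razborov type) process, in which the QH case yields HNN-extensions with cyclic associated subgroups, periodic structures (Lemma \ref{2.10''}) yield HNN-extensions with abelian associated subgroups and centralizer extensions, the accompanying linear system $\Sigma_{complete}$ on the lengths of the items forces the associated subgroups to be length-isomorphic, and termination comes from the Delzant--Potyagailo complexity \cite{DP} together with the complexity $\tau$ of the generalized equation --- none of which is visible to an induction on the height of $\Lambda$.
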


\begin{theorem}
Let $G$ be a finitely presented regular $\Lambda$-free group. Then $G$ is a torsion-free toral
relatively hyperbolic biautomatic group with a quasi-convex hierarchy (see  \cite{wise} for the definition of a quasi-convex hierarchy).
\end{theorem}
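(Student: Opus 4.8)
The plan is to derive each asserted property of $G$ from the structural decomposition provided by Theorem~\ref{th:main1}, proceeding by induction on the length $n$ of the series $G_1 < G_2 < \cdots < G_n = G$. The base case is a free group $G_1$, which is manifestly torsion-free, toral relatively hyperbolic (trivially, relative to the empty collection of peripheral subgroups, or hyperbolic outright), biautomatic, and possesses a quasi-convex hierarchy vacuously. The inductive step must then show that each of these four properties is preserved when passing from $G_i$ to $G_{i+1}$ via an HNN-extension whose associated subgroups are maximal abelian and whose associated isomorphism preserves the induced length function.

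For the torsion-free and toral relatively hyperbolic parts, I would argue as follows. Since the associated subgroups in each HNN-extension are abelian and the whole construction takes place inside a group acting freely on a $\Lambda$-tree, every abelian subgroup is in fact free abelian (freeness of the action rules out torsion and forces abelian subgroups to be finitely generated free abelian of rank bounded by the structure of $\Lambda$); torsion-freeness of an HNN-extension over torsion-free base with torsion-free associated subgroups is standard. For the relative hyperbolicity, the key input is the Bestvina--Feighn combination theorem together with the theory of relatively hyperbolic groups under HNN-extensions: an HNN-extension of a toral relatively hyperbolic group along a maximal abelian (hence, by maximality, malnormal up to commensurability) subgroup yields again a toral relatively hyperbolic group, with the peripheral structure enlarged by the new abelian subgroups. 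Here the maximality of the associated subgroups is essential, since it guarantees the malnormality/almost-malnormality condition needed to apply the combination theorem and to keep the peripheral subgroups abelian.

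For biautomaticity and the quasi-convex hierarchy, I would invoke the work of Wise and collaborators. The associated subgroups being maximal abelian means each HNN-extension is a splitting along a quasi-convex (in fact abelian, hence undistorted in a toral relatively hyperbolic group) edge group; assembling these splittings across the series $G_1 < \cdots < G_n$ yields precisely a quasi-convex hierarchy in the sense of \cite{wise}. Given such a hierarchy terminating in free (hence virtually special) groups, Wise's results furnish virtual specialness of $G$, and biautomaticity then follows since special groups embed in right-angled Artin groups, which are biautomatic, and biautomaticity is inherited by the relevant finite-index and quasi-convex subgroup structure. The length-isomorphism condition on the associated subgroups is what certifies that each edge group is genuinely abelian and that the amalgamation respects the $\Lambda$-tree action, so that the hierarchy is indeed quasi-convex at every stage.

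The main obstacle I anticipate is verifying the quasi-convexity of the associated subgroups at each step and ensuring it persists up the series: while an abelian subgroup of a toral relatively hyperbolic group is undistorted, one must confirm that after an HNN-extension the \emph{previous} edge groups remain quasi-convex in the enlarged group, so that the hierarchy accumulated through $G_1 < \cdots < G_n$ is simultaneously quasi-convex rather than merely quasi-convex one step at a time. This is exactly where the interplay between the maximal-abelian hypothesis (guaranteeing malnormality and hence control over how edge groups sit in vertex groups) and the length-preservation hypothesis (guaranteeing compatibility with the ambient $\Lambda$-tree metric) must be used carefully; I would expect the delicate part of the write-up to be tracking the peripheral structure and the quasi-convexity constants through the induction, invoking the combination-theorem machinery and Wise's quasi-convex hierarchy framework rather than re-proving them.
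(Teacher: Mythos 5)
Your first half (torsion-freeness and toral relative hyperbolicity by induction up the series of Theorem~\ref{th:main1}) matches the paper, which disposes of this in one line: relative hyperbolicity ``follows from the description of the structure of $G$'' (i.e., exactly your inductive combination argument, though the right tool is the Dahmani--Osin combination theory for relatively hyperbolic groups rather than Bestvina--Feighn), or alternatively from Guirardel \cite{G} together with Theorem~\ref{th:main3}, since $G$ is $\mathbb{R}^n$-free. The divergence --- and the problem --- is in your treatment of biautomaticity and the quasi-convex hierarchy, where you invert the paper's logical order. The paper gets biautomaticity \emph{directly} from toral relative hyperbolicity via Rebbechi \cite{Rebeca}, and only then obtains quasi-convexity of the edge groups as a consequence: centralizers in biautomatic groups are quasi-convex \cite{bridson}, the associated subgroups are maximal abelian, and a short case analysis finishes it (if the two associated subgroups are not conjugate they are centralizers in $G_{i+1}$, hence quasi-convex there; if they are conjugate, the HNN-extension is an extension of a centralizer, $C = \langle A, t \mid [a,t]=1,\ a \in A\rangle$ is quasi-convex in $G_{i+1}$, and $A$ is a direct summand of $C$, hence quasi-convex). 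This three-line centralizer argument is precisely what resolves the ``persistence of quasi-convexity up the series'' obstacle that your final paragraph correctly identifies but leaves open.

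Your alternative route through Wise \cite{wise} has two genuine gaps. First, it is near-circular: Wise's machinery takes a quasi-convex hierarchy as \emph{input}, but your only justification for quasi-convexity of the edge groups is ``abelian, hence undistorted,'' and undistortion does not imply quasi-convexity in general; in the toral relatively hyperbolic setting one would need an additional theorem (of Hruska type, relating undistortion to relative quasi-convexity) and a careful match with the quasi-convexity notion actually used in \cite{wise} --- none of which you supply, and which the paper's biautomaticity-first ordering makes unnecessary. Second, the inference ``virtually special $\Rightarrow$ embeds in a RAAG $\Rightarrow$ biautomatic, inherited by the subgroup'' is invalid as stated: biautomaticity does not pass to subgroups of biautomatic groups (finitely presented subgroups of right-angled Artin groups can fail to be of type $F_3$, e.g.\ Bestvina--Brady kernels, so they are not even combable in general). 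To repair it you would need the embedding to be convex/cocompactly cubulated and then invoke the Niblo--Reeves theorem that groups acting properly cocompactly on CAT(0) cube complexes are biautomatic --- a different theorem from the one you cite, and still contingent on cocompactness of the cubulation produced by the hierarchy theorem in the relatively hyperbolic (non-hyperbolic) setting. The clean fix is simply the paper's: cite \cite{Rebeca} for biautomaticity of toral relatively hyperbolic groups, then harvest quasi-convexity of the hierarchy from the centralizer theorem of \cite{bridson}.
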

\begin{proof}
It follows from the description of the structure of $G$ that each $G_i$ is toral relatively
hyperbolic. (It also follows from \cite{G} and Theorem 3 since $G$ acts  freely on
$\mathbb{R}^n$-tree.) From \cite{Rebeca} it follows that $G$ is biautomatic. Now by \cite{bridson}
centralizers are regular and quasi-convex in biautomatic groups. So the associated subgroups in
$G_i$ are quasiconvex in $G_i$. If they are not conjugate, then they are centralizers in $G_{i+1}$ and therefore
quasiconvex there. If they are conjugate, then the HNN-extension is equivalent to an extension of the centralizer of one of them (denote it by $A$).
The centralizer $C=<A, t| [a,t]=1, a\in A>$ is quasiconvex in $G_{i+1}$ and  $A$ is a direct summand of $C$, therefore $A$ is quasiconvex in $C$ and in $G_{i+1}.$
\end{proof}

\begin{cor}
Let $G$ be a finitely presented regular $\Lambda$-free group. Then the following algorithmic
problems are decidable in $G$:
\begin{itemize}
\item the Word and Conjugacy Problems;,
\item the Diophantine Problem (decidability of arbitrary equations in $G$).
\end{itemize}
\end{cor}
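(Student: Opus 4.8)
The plan is to derive all three decidability results from the structural properties already established in Theorem 2, namely that $G$ is a torsion-free, toral relatively hyperbolic, biautomatic group, together with the hierarchical description in Theorem \ref{th:main1}.

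First I would dispatch the Word and Conjugacy Problems using biautomaticity alone. Every automatic group admits an automatic structure that yields a quadratic-time solution to the Word Problem, so this part is immediate from Theorem 2 (biautomaticity via \cite{Rebeca}). For the Conjugacy Problem I would invoke the classical theorem of Gersten--Short that biautomatic groups have solvable conjugacy problem: the two-sided fellow-traveller property of a biautomatic structure supplies an effective bound on conjugating elements that reduces conjugacy testing to a finite search. Since Theorem 2 guarantees that $G$ is biautomatic, both problems are decidable.

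The remaining and genuinely harder part is the Diophantine Problem. Here I would use that $G$ is torsion-free and toral relatively hyperbolic. The key input is a Makanin--Razborov type algorithm for such groups: over a torsion-free toral relatively hyperbolic group arbitrary systems of equations are decidable, and in fact the solution set can be effectively described, by Dahmani's extension of the Rips--Sela and Makanin--Razborov techniques (canonical representatives) to the relatively hyperbolic setting. Applying this to $G$ gives decidability of arbitrary equations, which is exactly the Diophantine Problem.

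The main obstacle is concentrated entirely in this last step: whereas the Word and Conjugacy Problems are formal consequences of biautomaticity, the decidability of equations rests on the full strength of the canonical representatives and Makanin--Razborov machinery for relatively hyperbolic groups. One alternative I would keep in reserve is a direct inductive approach along the HNN-hierarchy of Theorem \ref{th:main1}: since each $G_{i+1}$ is obtained from $G_i$ by HNN-extensions over maximal abelian, length-isomorphic subgroups, one could try to lift an equation-solving procedure from $G_i$ to $G_{i+1}$ using the Nielsen cancellation theory afforded by the regular length function. This would avoid citing the relatively hyperbolic machinery, but controlling the interaction of equations with the stable letters across the hierarchy appears to demand essentially the same effort, so I expect the cited result for toral relatively hyperbolic groups to give the cleanest route.
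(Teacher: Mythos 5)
Your proposal is correct and follows essentially the same route as the paper: the paper's entire justification is that decidability of equations follows from Dahmani's result on existential questions in (toral) relatively hyperbolic groups (\cite{Dah}), which is exactly the key input you identify, and the Word and Conjugacy Problems are immediate from the structural Theorem on $G$ (your derivation via biautomaticity and Gersten--Short is a valid instance of this; one could equally note that both problems are special cases of the Diophantine Problem, since conjugacy is the equation $x^{-1}gx = h$). Your reserve plan of inducting along the HNN-hierarchy is not needed and is not what the paper does.
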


Indeed, decidability of equations follows from \cite{Dah}. Results of Dahmani and Groves \cite{DG}
imply the following two corollaries.

\begin{cor}
Let $G$ be a finitely presented regular $\Lambda$-free group. Then:
\begin{itemize}
\item $G$ has a non-trivial abelian splitting and one can find such a splitting effectively,
\item $G$ has a non-trivial abelian JSJ-decomposition and one can find such a decomposition
effectively.
\end{itemize}
\end{cor}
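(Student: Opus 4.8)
The plan is to deduce both assertions from the structural and algorithmic facts already assembled: Theorem~\ref{th:main1}, the fact established above that $G$ is a torsion-free toral relatively hyperbolic group, and the effective JSJ theory of Dahmani and Groves \cite{DG}. First I would fix the relatively hyperbolic structure of $G$ explicitly. Since $G$ is torsion-free toral relatively hyperbolic, its maximal parabolic (peripheral) subgroups are abelian, and by Theorem~\ref{th:main1} they are exactly the maximal abelian subgroups arising as associated subgroups in the HNN-hierarchy $G_1 < \cdots < G_n = G$. Because that hierarchy is produced from the finite presentation of $G$, a finite list of conjugacy representatives of these peripheral subgroups, together with the data certifying relative hyperbolicity, can be exhibited algorithmically; this is precisely the input format required by the procedures of \cite{DG}.

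Next, the existence of a non-trivial abelian splitting follows directly from Theorem~\ref{th:main1}: if $G = G_1$ is free it splits as a non-trivial free product (or is infinite cyclic), and otherwise $G$ is an HNN-extension of $G_{n-1}$ with maximal abelian associated subgroups, which is itself a non-trivial splitting over an abelian edge group. To make this effective, and to upgrade it to the canonical abelian JSJ-decomposition, I would invoke the algorithms of Dahmani and Groves: their work shows that for a toral relatively hyperbolic group presented as above one can decide the existence of, and explicitly compute, splittings over abelian subgroups, and can assemble the canonical abelian JSJ tree. Applying these to $G$ yields simultaneously an effective non-trivial abelian splitting and an effective abelian JSJ-decomposition.

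The main obstacle I expect is the effectiveness rather than the existence part. Existence is essentially formal once Theorem~\ref{th:main1} and the general JSJ theory of Rips--Sela and Bowditch are in hand. The delicate point is to certify that $G$ is handed to the Dahmani--Groves machinery together with a genuinely verifiable toral relatively hyperbolic structure, so that their decision and construction procedures are guaranteed to terminate with correct output; this in turn reduces to confirming that the maximal abelian associated subgroups of Theorem~\ref{th:main1} are exactly the maximal parabolics of $G$, which is supplied by the structural results already proved above.
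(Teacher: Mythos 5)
Your proposal is correct and takes essentially the same route as the paper, which deduces both items in a single line from the preceding theorem (that $G$ is torsion-free toral relatively hyperbolic, built on Theorem~\ref{th:main1}) combined with the algorithmic results of Dahmani and Groves \cite{DG}. One harmless inaccuracy: the peripheral subgroups of the toral relatively hyperbolic structure are the maximal \emph{non-cyclic} abelian subgroups, not literally all maximal abelian associated subgroups of the hierarchy (cyclic ones are hyperbolic, not parabolic), but this does not affect the argument since the Dahmani--Groves procedures apply to the standard computable peripheral structure.
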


\begin{cor}
The isomorphism problem is decidable in the class of finitely presented groups that act regularly
and freely on some $\Lambda$-tree.
\end{cor}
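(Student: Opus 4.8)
The plan is to reduce this to the solution of the isomorphism problem for toral relatively hyperbolic groups due to Dahmani and Groves \cite{DG}. By the preceding theorem, every finitely presented group that acts freely and regularly on some $\Lambda$-tree is a torsion-free toral relatively hyperbolic group. Thus the class under consideration is a subclass of the class of toral relatively hyperbolic groups, and it suffices to run the Dahmani--Groves algorithm on the two given presentations: since both input groups are guaranteed to lie in our class, they are both toral relatively hyperbolic, and the algorithm returns a correct answer. Note that we are not required to recognize membership in the class --- the isomorphism problem relative to a class takes as input presentations that are promised to define groups in that class.

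The point that requires care is that the Dahmani--Groves procedure operates on a toral relatively hyperbolic group together with its peripheral (parabolic) structure, so we must produce this structure effectively from a bare finite presentation. Here I would use the fact that in a toral relatively hyperbolic group the maximal parabolic subgroups are exactly the maximal non-cyclic abelian subgroups, and that this family is canonical up to conjugacy. By the previous corollary (again an application of \cite{DG}), a finitely presented regular $\Lambda$-free group admits an effectively computable non-trivial abelian JSJ-decomposition; from this decomposition the maximal non-cyclic abelian subgroups, and hence the peripheral structure, can be read off. This makes the input to the Dahmani--Groves algorithm computable and completes the reduction.

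The main obstacle I anticipate is precisely this effectivity of the peripheral structure: one must check that the canonical toral structure supplied by the preceding theorem and by \cite{G} (via the free action on an $\mathbb{R}^n$-tree) coincides with the structure required by the Dahmani--Groves algorithm, and that it can be computed uniformly from the presentation rather than merely shown to exist. Once this bookkeeping is in place, decidability is immediate from \cite{DG}, since the isomorphism problem is solvable in the strictly larger class of all toral relatively hyperbolic groups, into which our class embeds.
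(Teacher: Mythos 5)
Your proposal is correct and takes essentially the same route as the paper, which derives this corollary directly from the Dahmani--Groves solution of the isomorphism problem for toral relatively hyperbolic groups \cite{DG}, membership of the class in that larger class being guaranteed by the preceding theorem. Your extra worry about effectively producing the peripheral structure is already absorbed into \cite{DG}, whose algorithm takes bare finite presentations of toral relatively hyperbolic groups as input, so the detour through the effective JSJ corollary is unnecessary (and the paper does not take it).
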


\begin{theorem}
\label{th:main3}
Any finitely presented regular $\Lambda$-free group $G$ is $\mathbb{R}^n$-free for an appropriate
$n \in \mathbb{N}$, where $\mathbb{R}^n$ is ordered lexicographically.
\end{theorem}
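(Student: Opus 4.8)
The plan is to argue by induction along the series provided by Theorem \ref{th:main1}. By that theorem $G = G_n$, where $G_1$ is free and each $G_{i+1}$ is obtained from $G_i$ by finitely many HNN-extensions whose associated subgroups are maximal abelian and whose associated isomorphisms preserve the length induced from $G_i$. I would prove, by induction on $i$, the stronger statement that each $G_i$ carries a \emph{regular} free Lyndon length function with values in $\mathbb{R}^{k_i}$ (ordered lexicographically) for some $k_i \in \mathbb{N}$. The theorem is then the case $i = n$, and the final $n = k_n$ is bounded in terms of the length of the series.

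For the base case, a free group acts freely on a simplicial tree, i.e. on a $\mathbb{Z}$-tree; composing with the order embedding $\mathbb{Z} \hookrightarrow \mathbb{R}$ (base change) yields a free action on an $\mathbb{R}$-tree, and this action is regular, since the vertices of the simplicial tree form a single orbit containing the base point. Hence $G_1$ is regular $\mathbb{R}$-free and we may take $k_1 = 1$.

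For the inductive step the main work is a Bass-type construction. Assume $G_i$ is regular $\mathbb{R}^{k_i}$-free and set $\Lambda_0 := \mathbb{R}^{k_i}$. Given the HNN-extensions producing $G_{i+1}$ over maximal abelian subgroups $A_j$ with length-preserving isomorphisms $\phi_j : A_j \to B_j$, I would build a free regular length function on $G_{i+1}$ with values in $\Lambda_0 \oplus \mathbb{R}$, ordered so that the new $\mathbb{R}$-coordinate is the most significant (equivalently $\mathbb{R} \oplus \mathbb{R}^{k_i} = \mathbb{R}^{k_i+1}$ with the lexicographic order). Concretely, one places the $\Lambda_0$-tree of $G_i$ into the infinitesimal part and attaches, equivariantly, segments in the new dominant direction along which the stable letters translate; this is exactly the reverse of the splitting of \cite{B}, where a group acting freely on a $\Lambda_0 \oplus \mathbb{Z}$-tree decomposes as a graph of $\Lambda_0$-free groups with maximal abelian edge groups. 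That the subgroups $A_j, B_j$ are maximal abelian (hence centralizers) guarantees the stable letters act without fixed points and that the cancellation in the new direction is clean, so the resulting action is free; the length-preserving hypothesis on $\phi_j$ is what makes the gluing an isometry, and regularity is inherited because every newly created branch point is an endpoint of a stable-letter segment attached along an orbit point, hence lies in the orbit of the base point. Thus $G_{i+1}$ is regular $\mathbb{R}^{k_i+1}$-free, and we take $k_{i+1} = k_i + 1$.

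The main obstacle is precisely this inductive lemma: verifying that the HNN-extension of a regular $\mathbb{R}^{k_i}$-free group over maximal abelian, length-isomorphic subgroups is again regular and free on the enlarged tree with value group $\mathbb{R}^{k_i+1}$. The delicate points are (i) freeness of the action, that is, ruling out that a nontrivial element fixes a point, which requires combining freeness of the $G_i$-action with the maximality of the associated abelian subgroups and a normal-form/cancellation argument for the HNN-extension; and (ii) the regularity axiom, i.e. that all branch points of the new tree lie in the orbit of the base point, which must be checked against the new branching introduced in the dominant $\mathbb{R}$-direction. Handling several HNN-extensions at one level simultaneously (rather than one at a time) requires care but does not increase the number of coordinates added at that level, which is why the accumulated $n$ stays finite and bounded by the length of the series in Theorem \ref{th:main1}.
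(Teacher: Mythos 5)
There is a genuine gap, and it sits exactly where you placed your ``main obstacle'': the inductive combination lemma is false as you state it, and Theorem \ref{th:main1} cannot supply it. Theorem \ref{th:main1} is a one-directional structure theorem (every finitely presented regular $\Lambda$-free group admits such a tower); it is not a characterization, so the properties you extract from it --- associated subgroups maximal abelian, associated isomorphism length-preserving --- are not sufficient for the HNN-extension to act freely on any tree, let alone regularly on an $\mathbb{R}^{k_i+1}$-tree. Concretely, let $F$ be free, $a \in F$ not a proper power, $A = B = \langle a \rangle$ (maximal abelian), and $\phi(a) = a^{-1}$, which preserves length by axiom (L2). Then $G = \langle F, t \mid t^{-1} a t = a^{-1} \rangle$ contains the Klein bottle group $\langle a, t \rangle$, in which $[t^2,a]=1$ and $[t^2,t]=1$ but $[a,t] \neq 1$; this group is not commutative transitive, hence not $\Lambda$-free for any $\Lambda$. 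So ``reversing'' the splitting of \cite{B} requires extra data about \emph{which} HNN-extensions the elimination process actually produces (extensions of centralizers, QH-type extensions, stable letters infinitely longer than the associated subgroups, etc.), not just the two properties listed in Theorem \ref{th:main1}. A second, independent gap: your inductive hypothesis is too weak to propagate. After you replace the $\Lambda$-length on $G_i$ by \emph{some} regular free $\mathbb{R}^{k_i}$-length, the hypothesis that $\phi_j : A_j \to B_j$ is length-preserving --- which held for the $\Lambda$-length --- need not hold for the new length, and it is precisely this isometry that your gluing requires. You would have to strengthen the induction to produce an $\mathbb{R}^{k_i}$-length satisfying all the finitely many length constraints needed at all later levels simultaneously.

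This second point is why the paper does not induct on the tower of Theorem \ref{th:main1} at all. Instead it works directly with the generalized equation $\Omega_{fin}$ produced by the elimination process: all length compatibility conditions (in particular, that associated abelian subgroups are generated by elements of equal length) are recorded as a homogeneous linear system $\Sigma_{complete}$ with integer coefficients on the lengths of the items (Remark \ref{fin}), and the $\mathbb{R}^n$-structure is obtained by \emph{re-defining} the lengths of the finitely many bases: working up the hierarchy of closed sections, one projects $\Lambda_1/\Lambda'$ into $\mathbb{R}$ and zeroes out every component of every base length except those that occur as maximal components of some base. The real work (Lemmas \ref{quad-main}, \ref{quad}, \ref{lin}, \ref{lin1}) is to check that the infinite entire-transformation and linear-elimination processes survive the projection and that any element of infinitesimal length is a product of infinitesimal bases, so the foliated complex built from the Cayley graph still yields a free regular action, now on an $\mathbb{R}^n$-tree; the new lengths satisfy the same system $\Sigma_{complete}$, which is what replaces your unavailable ``$\phi_j$ still isometric'' hypothesis. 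Note also that the paper's bound on $n$ comes from the number of pairs of bases, not from the number of levels of the tower, so even your accounting $k_{i+1} = k_i + 1$ is not how the exponent arises in the actual proof.
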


\begin{theorem}
\label{th:main4}
Any finitely presented $\Lambda$-free group can be isometrically embedded into a finitely presented
regular $\Lambda$-free group.
\end{theorem}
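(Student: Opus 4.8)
The plan is to start from the length-preserving regular completion of \cite{Ch2}. Since $G$ acts freely on a $\Lambda$-tree, that result produces a regular $\Lambda$-free group $\widehat{G}$ together with a length-preserving embedding $G \hookrightarrow \widehat{G}$. So the mere \emph{existence} of an isometric embedding into a regular group is already available; the entire content of the theorem is that, when $G$ is finitely presented, the target can also be taken finitely presented. Accordingly, I would aim to produce a finitely presented regular $\Lambda$-free group $\widetilde{G}$ with $G \leqslant \widetilde{G} \leqslant \widehat{G}$ on which the length function restricts to that of $G$. One cannot choose $\widetilde{G}$ carelessly, since regularity is not inherited by arbitrary subgroups: $\widetilde{G}$ must be built so as to contain enough branch points.

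I would work throughout inside the algebra of infinite $\Lambda$-words of \cite{MRS}, so that elements are reduced $\Lambda$-words, the length function is the word length, and the Gromov product $(u,v) = \tfrac{1}{2}\bigl(|u| + |v| - |u^{-1}v|\bigr)$ measures the longest common initial segment of $u$ and $v$. In this model, regularity of a subgroup $H$ says exactly that the common initial segment of any two elements of $H$ is again a translate of an element of $H$; geometrically, that every branch point of the associated subtree lies in the orbit of the base point. The completion $\widehat{G}$ is obtained from $G$ by closing up under the operation that realizes such common initial segments as elements.

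The heart of the argument --- and the step I expect to be the main obstacle --- is to show that for finitely presented $G$ this closure is reached after adjoining only \emph{finitely many} new initial segments $s_1,\dots,s_p \in \widehat{G}$, so that $\widetilde{G} = \langle G, s_1,\dots,s_p\rangle$ is already regular. This is where finite presentation is used essentially (finite generation alone does not suffice, which is why the weaker statement of \cite{KMS2} had to enlarge the value group). Concretely, the branch points along the geodesics from the base point to the generators of $G$ are detected by overlaps among the $\Lambda$-words representing generators and \emph{defining relators}; the Nielsen cancellation theory for $\Lambda$-words developed in this series bounds these overlap patterns and thereby shows that only finitely many orbits of branch points arise. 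One must then verify that adjoining representatives of these, and closing under multiplication, produces no further orbits of branch points --- the key cancellation fact to be established being that, after reduction, the common initial segment of a product is always one already present among its factors. Proving simultaneously that finitely many $s_i$ suffice and that the resulting group is genuinely closed (hence regular) is the delicate part.

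It then remains to check that $\widetilde{G}$ is finitely presented. It is generated by the generators of $G$ together with $s_1,\dots,s_p$, and a finite set of defining relations is obtained from the relators of $G$ together with the finitely many relations expressing the reduced forms of $s_i s_j$, $s_i^{-1}$ and $s_i x$ for generators $x$ of $G$; finiteness here is again a consequence of the bounded cancellation data. Alternatively, and perhaps more cleanly, one can organize the passage from $G$ to $\widetilde{G}$ as a finite chain of HNN-extensions along maximal abelian (centralizer) subgroups in the manner of Theorem~\ref{th:main1}, each step preserving finite presentation, with finite presentation of $G$ serving precisely to guarantee that the chain terminates. Since $\widetilde{G} \leqslant \widehat{G}$ inherits freeness of the action and the embedding $G \hookrightarrow \widetilde{G}$ is a restriction of the length-preserving embedding into $\widehat{G}$, the embedding is isometric; and $\widetilde{G}$ is regular by construction. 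This would complete the proof.
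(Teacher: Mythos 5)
Your opening moves coincide with the paper's: start from the Chiswell--M\"uller completion $\widehat G$ of \cite{Ch2}, and adjoin to $G$ finitely many elements of $\widehat G$ coming from the overlaps among the $\Lambda$-words of the generators forced by the finitely many relators (in the paper these are exactly the pieces $\lambda_e$ of the cancellation diagrams, Remark \ref{nonreg}). But the step you yourself flag as the heart of the argument is a genuine gap, and the key cancellation fact you propose is not true as stated. It is false in general that, after reduction, the common initial segment of a product of infinite $\Lambda$-words is already present among the factors: multiplying elements of $\langle G, s_1,\dots,s_p\rangle$ creates new overlaps, hence new branch points, and the iteration need not visibly stabilize --- controlling precisely this unbounded iteration is what the paper's elimination process exists for, and no ``bounded overlap pattern'' theorem of the kind you invoke is available in the Nielsen theory of \cite{MRS}. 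A red flag you half-notice yourself: your closure claim nowhere uses finite presentation (the initial branch points from pairs of generators are finite for any finitely generated group), so if it held it would embed every finitely \emph{generated} $\Lambda$-free group isometrically into a finitely generated regular one over the same $\Lambda$, whereas already for $\Lambda=\mathbb{Z}^n$ the result of \cite{KMS2} had to enlarge the value group. In the paper, finite presentation enters elsewhere entirely: through termination of the elimination process, via Delzant--Potyagailo accessibility (Proposition \ref{DP}) and the complexity estimates for generalized equations, not through a cancellation bound.

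The paper's actual proof builds the generalized equation $\Omega$ for $\widetilde G$ (adding the finitely many elements of $\widehat G$, Remark \ref{nonreg}), runs the elimination process of Theorem \ref{th:main1} --- with the subtlety that for non-regular initial data the map in case (ET5)(a) may be a proper epimorphism whose restriction to $\widetilde G$ remains injective, so the target is a quotient of $G_\Omega$ rather than literally your subgroup of $\widehat G$ --- and obtains (Remark \ref{nonreg1}) a group containing $\widetilde G$ built from a free group by finitely many HNN-extensions with maximal abelian, length-isomorphic associated subgroups; finite presentation is a consequence of this tower structure, and regularity of the $\Lambda$- (and $\mathbb{R}^n$-) length function is then established by Lemmas \ref{quad} and \ref{lin} and induction, not by verifying closure under common initial segments inside $\widehat G$. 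So the alternative you mention in passing --- realizing the passage as a finite chain of HNN-extensions ``in the manner of Theorem \ref{th:main1}'' --- is in fact the paper's proof, but it is not an optional repackaging of your cancellation argument: it requires the generalized-equation machinery your proposal does not supply. Likewise your finite-presentation step (recording reduced forms of $s_is_j$, $s_i^{-1}$, $s_ix$) only lists candidate relations; that these finitely many relations are defining is again the unproven point, which the paper avoids by deriving finite presentation structurally.
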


\begin{cor}
Any finitely presented $\Lambda$-free group $G$ is $\mathbb{R}^n$-free for an appropriate $n \in
\mathbb{N}$ and lexicographically ordered $\mathbb{R}^n$.
\end{cor}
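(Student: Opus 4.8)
The plan is to derive this corollary directly from Theorem~\ref{th:main4} and Theorem~\ref{th:main3}, together with the elementary fact that $\Lambda$-freeness passes to subgroups. First I would invoke Theorem~\ref{th:main4} to embed the given finitely presented $\Lambda$-free group $G$ isometrically into a finitely presented \emph{regular} $\Lambda$-free group $\widetilde G$. This arranges exactly the regularity hypothesis demanded by Theorem~\ref{th:main3}, at the cost of passing to the larger overgroup $\widetilde G$.

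Next I would apply Theorem~\ref{th:main3} to $\widetilde G$. This yields an integer $n$ and a free Lyndon length function on $\widetilde G$ with values in $\mathbb{R}^n$ (ordered lexicographically), equivalently a free isometric action of $\widetilde G$ on an $\mathbb{R}^n$-tree. Finally I would use that a free isometric action on a $\Lambda$-tree restricts to a free isometric action of every subgroup: restricting the $\mathbb{R}^n$-action of $\widetilde G$ to the image of $G$ produces a free $\mathbb{R}^n$-action of $G$, so $G$ is $\mathbb{R}^n$-free for the same $n$. In length-function language, the $\mathbb{R}^n$-valued free length function on $\widetilde G$ restricts to a length function on $G$; since the axioms defining a free Lyndon length function are conditions on individual group elements, they are inherited automatically, and the restricted function is again free.

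The argument is essentially immediate once Theorems~\ref{th:main3} and~\ref{th:main4} are in hand, so I do not expect a genuine obstacle here. The only subtlety worth flagging is that the $\mathbb{R}^n$-length function obtained on $G$ need not coincide with its original $\Lambda$-length function: Theorem~\ref{th:main4} preserves the $\Lambda$-length under the embedding, but the subsequent change of value group from $\Lambda$ to $\mathbb{R}^n$ furnished by Theorem~\ref{th:main3} is not asserted to be length-preserving. Since the corollary claims only the existence of \emph{some} free $\mathbb{R}^n$-structure on $G$, and not the preservation of a prescribed length, this mismatch is harmless and the conclusion follows.
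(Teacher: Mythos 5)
Your proposal is correct and is exactly the deduction the paper intends: the corollary is stated immediately after Theorems~\ref{th:main3} and~\ref{th:main4} with no separate proof, precisely because it follows by embedding $G$ isometrically into a finitely presented regular $\Lambda$-free group (Theorem~\ref{th:main4}), applying Theorem~\ref{th:main3} to that overgroup, and restricting the resulting free $\mathbb{R}^n$-action to $G$. Your flagged caveats are also right: freeness (unlike regularity, whose axiom (L6) is existential) passes to subgroups since (L1)--(L5) are universal conditions, and the corollary claims no compatibility between the new $\mathbb{R}^n$-length and the original $\Lambda$-length.
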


Our main conjecture about $\Lambda$-free groups is still open.

\begin{con}
\label{th:main2}
Any finitely presented $\Lambda$-free group $G$ is $\mathbb{Z}^k$-free for an appropriate $k \in
\mathbb{N}$ and lexicographically ordered $\mathbb{Z}^k$.
\end{con}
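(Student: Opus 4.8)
The plan is to reduce to the regular case and then induct along the HNN-hierarchy produced by Theorem~\ref{th:main1}. By Theorem~\ref{th:main4} the group $G$ embeds isometrically into a finitely presented \emph{regular} $\Lambda$-free group $\widetilde G$, and a free action restricts to a free action on any subgroup; hence if $\widetilde G$ is $\mathbb{Z}^k$-free then so is $G$, and it suffices to prove the conjecture under the extra assumption that $G$ is regular. For such $G$ we already have two strong inputs: it is $\mathbb{R}^n$-free (Theorem~\ref{th:main3}, with $\mathbb{R}^n$ lexicographically ordered), and it carries the HNN-decomposition of Theorem~\ref{th:main1}. One preliminary warning governs the whole strategy: the sought $\mathbb{Z}^k$-free structure cannot in general be the \emph{given} length function, because a maximal abelian subgroup may be densely ordered inside $\mathbb{R}^n$ (for instance a copy of $\mathbb{Z}+\mathbb{Z}\sqrt2$), and such an Archimedean dense ordered group embeds into no lexicographic $\mathbb{Z}^k$. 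So the objective is to construct a \emph{new} free action, using only the algebraic decomposition rather than the analytic lengths.

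I would then induct on the series $G_1 < G_2 < \cdots < G_n = G$. The base case is immediate: $G_1$ is free, hence acts freely on a $\mathbb{Z}$-tree (its Cayley tree). For the inductive step I assume $G_i$ acts freely on a $\mathbb{Z}^{k_i}$-tree, where $\mathbb{Z}^{k_i}$ is ordered lexicographically, and I must promote this to $G_{i+1}$, which is obtained from $G_i$ by finitely many HNN-extensions whose associated subgroups are maximal abelian and whose associated isomorphisms preserve length. The natural way to handle one such extension is to follow the template of \cite{KMRS1}, which characterizes precisely the regular $\mathbb{Z}^n$-free groups as iterated HNN-extensions of exactly this type: one assigns the new stable letter a length living in a fresh leading (or trailing) coordinate, replacing $\mathbb{Z}^{k_i}$ by $\mathbb{Z}^{k_i}\oplus\mathbb{Z}$ or $\mathbb{Z}\oplus\mathbb{Z}^{k_i}$, and then chooses $k_{i+1}$ large enough to accommodate the finitely many extensions performed simultaneously at this level.

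The key enabling fact for the abelian data is that in a finitely presented regular $\Lambda$-free group every maximal abelian subgroup is finitely generated: Theorem~2 gives that $G$ is toral relatively hyperbolic and biautomatic, so its centralizers are finitely generated and quasiconvex. Thus each associated subgroup $A$ is free abelian of finite rank, $A\cong\mathbb{Z}^{m}$, and is in particular $\mathbb{Z}^{m}$-free under the lexicographic order. I would use this to equip $A$ and its image $B$ with compatible $\mathbb{Z}^{m}$-valued length functions and to verify that the associated isomorphism $A\to B$ can be taken length-preserving for these \emph{new} lengths, so that the resulting HNN-extension again acts freely on a $\mathbb{Z}$-tree layered over the $\mathbb{Z}^{k_i}$-action of $G_i$, with trivial stabilizers.

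The hard part, and the reason the statement remains a conjecture rather than a theorem, is global compatibility across the hierarchy. Discretizing the length on one maximal abelian subgroup fixes a choice of $\mathbb{Z}^{m}$-grading that must simultaneously be consistent with every other abelian subgroup meeting it and with the already-built $\mathbb{Z}^{k_i}$-action of $G_i$, all while preserving freeness of the action (no new elliptic elements, trivial point stabilizers). Controlling the interaction between the possibly dense Archimedean components of the original $\mathbb{R}^n$-length and the purely non-Archimedean $\mathbb{Z}^k$ target, uniformly over the finitely many simultaneous extensions at each level, is exactly the step for which no general argument is presently available; it amounts to an integral, discrete analogue of the Rips-machine output, which the methods developed here do not yet supply.
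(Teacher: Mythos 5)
You are attempting to prove Conjecture~\ref{th:main2}, which the paper explicitly leaves open (``Our main conjecture about $\Lambda$-free groups is still open''), so there is no proof of it in the paper to compare against --- and, read carefully, your text is not a proof either: your final paragraph concedes that the decisive step has ``no general argument presently available.'' The reductions you do carry out are sound and consistent with the paper's own infrastructure: Theorem~\ref{th:main4} legitimately reduces to the regular case (a free action on a $\mathbb{Z}^k$-tree restricts to a free action of any subgroup), Theorem~\ref{th:main1} supplies the HNN-hierarchy, the base case of a free group acting on its Cayley tree is fine, and your warning that a maximal abelian subgroup may carry a dense Archimedean order such as $\mathbb{Z}+\mathbb{Z}\sqrt{2}$ inside $\mathbb{R}^n$ correctly identifies why the given length function cannot simply be discretized and a genuinely new action is needed.

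The genuine gap is exactly where you place it, but it is worth stating why the inductive step fails as written. Theorem~\ref{th:main1} guarantees that the associated isomorphisms preserve the length \emph{induced from $G_i$ in $\Lambda$}; once you replace that length by an inductively constructed $\mathbb{Z}^{k_i}$-valued one, nothing ensures that the associated maximal abelian subgroups $A$ and $B$ remain length-isomorphic, nor that their images sit compatibly with the axes of the new action --- and the template of \cite{KMRS1} requires precisely this compatibility to produce a free action of the HNN-extension. Concretely, the obstruction lives in the system $\Sigma_{complete}$ of integer linear equations on lengths of items together with the order constraints of the generalized equation: the proof of Theorem~\ref{th:main3} satisfies these with real components exactly because infinite runs of the entire transformation in the quadratic and periodic cases can force incommensurable length ratios, and it is unknown whether such infinite branches can always be re-gauged with integral data without creating fixed points or collapsing the tree. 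Your proposal is therefore an honest strategy outline that reproduces the reductions available in the paper and correctly isolates the open step; it does not establish the conjecture, and no argument in the paper closes this gap.
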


\section{Length functions, actions, infinite words}
\label{sec:prelim}

Here we introduce basic definitions and notations which are to be used throughout the whole paper.

\subsection{Lyndon length functions and free actions}
\label{subsec:Lyndon}

Let $G$ be a group and $\Lambda$ an ordered
abelian group. Then a function $l: G \rightarrow \Lambda$ is called a {\it (Lyndon) length function}
on $G$ if the following conditions hold:
\begin{enumerate}
\item [(L1)] $\forall\ g \in G:\ l(g) \geqslant 0$ and $l(1) = 0$;
\item [(L2)] $\forall\ g \in G:\ l(g) = l(g^{-1})$;
\item [(L3)] $\forall\ g, f, h \in G:\ c(g,f) > c(g,h)
\rightarrow c(g,h) = c(f,h)$,

\noindent where $c(g,f) = \frac{1}{2}(l(g)+l(f)-l(g^{-1}f))$.
\end{enumerate}
Sometimes we refer to length functions with values in $\Lambda$ as to $\Lambda$-length functions.
Observe, that the word length function on a finitely generated group (with respect to a fixed
finite generating set) usually is not a Lyndon length function. In this paper we consider length
functions only of Lyndon type.

Notice that  $c(g,f)$ may not be defined in $\Lambda$ (if $l(g) + l(f) - l(g^{-1}f)$ is not divisible by
$2$), so in the axiom (L3) we assume that $\Lambda$ is canonically embedded into a divisible ordered
abelian group $\Lambda_{\mathbb{Q}} = \Lambda \otimes_\mathbb{Z} \mathbb{Q}$ (see \cite{MRS} for
details).

It is not difficult to derive the following two properties of length functions from the axioms (L1)--
(L3):
\begin{itemize}
\item $\forall\ g, f \in G:\ l(g f) \leqslant l(g) + l(f)$;
\item $\forall\ g, f \in G:\ 0 \leqslant c(g,f) \leqslant \min\{l(g),l(f)\}$.
\end{itemize}
A length function $l : G \rightarrow \Lambda$ is called {\em free} if it satisfies the following two
axioms.

\begin{enumerate}
\item [(L4)] $\forall\ g, f \in G:\ c(g,f) \in \Lambda.$
\item [(L5)] $\forall\ g \in G:\ g \neq 1 \rightarrow l(g^2) > l(g).$
\end{enumerate}

As we have mentioned in the introduction group actions on $\Lambda$-trees provide a geometric
counterpart to $\Lambda$-length functions. To explain we need the following definitions, which we
also use later in the text.

Let $X$ be a non-empty set and $\Lambda$ an ordered abelian group. A {\em $\Lambda$-metric on $X$}
is a mapping $p: X \times X \longrightarrow \Lambda$ such that for all $x,y,z \in X$:
\begin{enumerate}
\item[(M1)] $p(x,y) \geqslant 0$ and $p(x,y) = 0$ if and only if $x = y$,
\item[(M2)] $p(x,y) = p(y,x)$,
\item[(M3)] $p(x,y) \leqslant p(x,z) + p(y,z)$.
\end{enumerate}
If the axioms (M1)--(M3) are satisfied then the pair $(X,p)$ is an {\em $\Lambda$-metric space}. If
$(X,p)$ and $(X',p')$ are $\Lambda$-metric spaces, an {\it isometry} from $(X,p)$ to $(X',p')$ is a
mapping $f: X \rightarrow X'$ such that $p(x,y) = p'(f(x),f(y))$ for all $x,y \in X$.

For elements $a,b \in \Lambda$  the {\it closed segment} $[a,b]$ is defined by
$$[a,b] = \{c \in A \mid a \leqslant c \leqslant b \}.$$
More generally, a {\it segment} in an $\Lambda$-metric space is the image of an isometry $\alpha:
[a,b] \rightarrow X$ for some $a,b \in \Lambda$. The endpoints of the segment are $\alpha(a),
\alpha(b)$. A segment with endpoints $x, y \in X$ is denoted by $[x,y]$. An $\Lambda$-metric space
$(X,p)$ is {\it geodesic} if for all $x,y \in X$, there is a segment in $X$ with endpoints $x,y$.

An {\em $\Lambda$-tree} is an $\Lambda$-metric space $(X,p)$ such that:
\begin{enumerate}
\item[(T1)] $(X,p)$ is geodesic,
\item[(T2)] if two  segments of $(X,p)$ intersect in a single point, which is an endpoint of both,
then their union is a segment,
\item[(T3)] the intersection of two segments with a common endpoint is also a segment.
\end{enumerate}

We say that a group $G$ acts on a $\Lambda$-tree $X$ if it acts on $X$ by isometries, i.e. there
exists an embedding of $G$ into $Isom(X)$. An action of $G$ on $X$ is termed {\em free} if for every
$1 \neq g \in G$ neither $g$, nor $g^2$ has a fixed point in $X$.

If $G$ acts on a $\Lambda$-tree $(X,p)$ and $x \in X$ then one can associate to this action a
function $l_x : G \to \Lambda$ so that $l_x(g) = p(x, g x)$ for every $g \in G$. It is not hard to
check that $l_x$ is a Lyndon length function. Moreover, if the action of $G$ on $X$ is free then
$l_x$ is a free length function. The converse is also true, that is, every action of a group $G$ on
a $\Lambda$-tree arises from some Lyndon $\Lambda$-length function.

\begin{theorem}
\cite{Ch, MS}
Let $G$ be a group and $l: G \rightarrow \Lambda$ a Lyndon length function satisfying (L4). Then
there are a $\Lambda$-tree $(X,p)$, an action of $G$ on $X$ and a point $x \in X$ such that $l =
l_x$.
\end{theorem}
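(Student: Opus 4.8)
The plan is to carry out the classical Chiswell construction, building the tree directly out of the length function. First I would form the set
$$Y = \{(g,\xi) : g \in G,\ 0 \leqslant \xi \leqslant l(g)\} \subseteq G \times \Lambda,$$
thinking of $(g,\xi)$ as the point at distance $\xi$ from the base point along a geodesic running out to $gx$. On $Y$ I would impose the relation $(g,\xi) \sim (h,\eta)$ iff $\xi = \eta$ and $\xi \leqslant c(g,h)$, identifying the initial stretches of the geodesics to $gx$ and to $hx$ up to their Gromov product. The only nonobvious point is transitivity, which rests on the ultrametric inequality $c(g,h) \geqslant \min\{c(g,k),c(k,h)\}$; this follows from (L3), which forces the minimum of the three products $c(g,k),c(k,h),c(g,h)$ to be attained at least twice (here one uses that $c$ is symmetric by (L2)). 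Setting $X = Y/{\sim}$ and $x = [(1,0)]$, well defined since $c(\cdot,\cdot) \geqslant 0$ makes all $(g,0)$ equivalent, yields the underlying point set.

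Next I would put the metric $p([g,\xi],[h,\eta]) = \xi + \eta - 2\min\{\xi,\eta,c(g,h)\}$ on $X$. Axiom (L4) guarantees the values lie in $\Lambda$ rather than merely in $\Lambda_{\mathbb{Q}}$, and one checks, again via the ultrametric inequality, that $p$ is independent of the chosen representatives and satisfies (M1)--(M3); symmetry and non-degeneracy are immediate, while the triangle inequality is the one computation that genuinely uses the behaviour of $c$. Taking $gx := [(g,l(g))]$ and noting $c(1,g) = 0$, a one-line evaluation gives $p(x,gx) = l(g)$, so the induced length function $l_x$ coincides with $l$.

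I would then verify the tree axioms. For (T1) the explicit geodesic from $[g,\xi]$ to $[h,\eta]$ descends to the branch parameter $m = \min\{\xi,\eta,c(g,h)\}$ along the first geodesic and ascends along the second, realizing a segment of the correct length; (T2) and (T3) reduce to elementary case analysis comparing parameters with the relevant Gromov products. Finally I would define the action: for $f \in G$ set
$$f \cdot [g,\xi] = \begin{cases} [f,\ l(f)-\xi] & \text{if } \xi \leqslant c(f^{-1},g),\\ [fg,\ l(fg)-l(g)+\xi] & \text{if } \xi \geqslant c(f^{-1},g),\end{cases}$$
which folds the geodesic $[x,gx]$ isometrically onto $[fx,(fg)x]$ at the branch point lying at distance $c(f^{-1},g) = l(f) - c(f,fg)$ from $fx$. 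One checks the two clauses agree at $\xi = c(f^{-1},g)$, that the map respects $\sim$, is distance-preserving, and that $f_1 \cdot (f_2 \cdot q) = (f_1 f_2)\cdot q$, giving a homomorphism $G \to \mathrm{Isom}(X)$ with $f \cdot x = [(f,l(f))] = fx$, consistent with the length computation above.

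The main obstacle I anticipate is the bookkeeping in the last step: checking that the two-case action formula is well defined on equivalence classes, is an isometry, and is genuinely a left action all require repeatedly invoking (L3) together with the identities relating $c(f^{-1},g)$, $c(f,fg)$ and the various lengths (e.g. $2c(f,fg)-l(f) = l(fg)-l(g)$). The metric and tree axioms, by contrast, follow fairly mechanically once the single ultrametric inequality for $c$ is in hand.
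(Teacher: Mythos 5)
Your proposal is correct and is precisely the classical Chiswell construction (as extended to general $\Lambda$ by Morgan--Shalen) to which the paper appeals: the paper itself gives no proof, citing \cite{Ch, MS}, and those sources prove the theorem exactly by your quotient $Y/\!\sim$, the metric $p([g,\xi],[h,\eta]) = \xi + \eta - 2\min\{\xi,\eta,c(g,h)\}$, and the two-case action formula, all of which check out (including the identities $c(f^{-1},g) = l(f) - c(f,fg)$ and the matching of the two clauses at $\xi = c(f^{-1},g)$). One minor refinement: $p$ is $\Lambda$-valued even without (L4), since $2c(g,h) = l(g)+l(h)-l(g^{-1}h) \in \Lambda$ always; where (L4) is genuinely needed is to ensure that branch points such as $[g, c(g,h)]$ are actual points of $Y$, so that intersections of segments are segments (axiom (T3)) and your geodesic through the parameter $m = \min\{\xi,\eta,c(g,h)\}$ exists in $X$.
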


\smallskip

For elements $g_1, \ldots, g_n \in G$ we write
$$g = g_1 \circ \cdots \circ g_n$$
if $g = g_1 \cdots g_n$ and $l(g) = l(g_1) + \cdots + l(g_n)$. Also, for $\alpha \in \Lambda$ we
write $g = g_1 \circ_\alpha g_2$ if $g = g_1 g_2$ and $c(g_1^{-1},g_2) < \alpha$.

A length function $l: G \rightarrow \Lambda$ is called {\it regular} if it satisfies the {\it
regularity} axiom:
\begin{enumerate}
\item [(L6)] $\forall\ g, f \in G,\ \exists\ u, g_1, f_1 \in G$:
$$g = u \circ g_1 \ \& \  f = u \circ f_1 \ \& \ l(u) = c(g,f).$$
\end{enumerate}

Many examples of groups with regular free length functions are given in \cite{KMRS1}.

\subsection{Regular actions}
\label{sec:regact}

In this subsection we give a geometric characterization of group actions that come from regular
length functions. The four lemmas below were proved in \cite{KMRS1}, but for convenience of the
reader we give the proofs here too.

\begin{definition}
Let $G$ act on a $\Lambda$-tree $\Gamma$. The action is regular with respect to $x \in \Gamma$ if
for any $g,h \in G$ there exists $f \in G$ such that $[x, f x] = [x, g x] \cap [x, h x]$.
\end{definition}

\begin{lemma}
\cite{KMRS1}
Let $G$ act on a $\Lambda$-tree $(\Gamma,d)$. Then the action of $G$ is regular with respect to $x
\in \Gamma$ if and only if the length function $l_x: G \rightarrow \Lambda$ based at $x$ is regular.
\end{lemma}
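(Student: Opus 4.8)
The plan is to pass through the single geometric fact that the Gromov product $c(g,h)$ based at $x$ measures exactly the distance from $x$ to the point where the geodesics $[x,gx]$ and $[x,hx]$ diverge. First I would record that, since $G$ acts by isometries, $d(gx,hx) = d(x, g^{-1}hx) = l_x(g^{-1}h)$, so that
$$c(g,h) = \tfrac{1}{2}\bigl(l_x(g) + l_x(h) - l_x(g^{-1}h)\bigr) = \tfrac{1}{2}\bigl(d(x,gx) + d(x,hx) - d(gx,hx)\bigr)$$
is precisely the Gromov product of $gx$ and $hx$ with respect to $x$. By the standard theory of $\Lambda$-trees (axioms (T1)--(T3)), the two segments $[x,gx]$ and $[x,hx]$ issuing from the common point $x$ intersect in a segment $[x,m]$, and the length of this common initial segment equals this Gromov product; that is, $[x,gx]\cap[x,hx] = [x,m]$ with $d(x,m) = c(g,h)$. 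I would cite this from Chiswell's book, as it is the only nontrivial input from $\Lambda$-tree geometry. Alongside it I would use the elementary betweenness criterion: for $a \in G$ one has $ax \in [x,gx]$ if and only if $d(x,gx) = d(x,ax) + d(ax,gx)$, which by the isometry property reads $l_x(g) = l_x(a) + l_x(a^{-1}g)$, i.e. $g = a \circ (a^{-1}g)$.

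For the direction "regular action $\Rightarrow$ (L6)", fix $g,h \in G$ (so that (L6) is stated for the pair with $f$ renamed $h$). Regularity of the action produces $f \in G$ with $[x,fx] = [x,gx]\cap[x,hx] = [x,m]$; comparing endpoints gives $fx = m$, hence $l_x(f) = d(x,m) = c(g,h)$. Since $fx = m$ lies on both $[x,gx]$ and $[x,hx]$, the betweenness criterion yields $g = f \circ (f^{-1}g)$ and $h = f \circ (f^{-1}h)$. Setting $u = f$, $g_1 = f^{-1}g$, $f_1 = f^{-1}h$ verifies (L6).

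For the converse "(L6) $\Rightarrow$ regular action", fix $g,h$ and apply (L6) to obtain $u, g_1, h_1$ with $g = u \circ g_1$, $h = u \circ h_1$, and $l_x(u) = c(g,h)$. The decomposition $g = u \circ g_1$ is exactly the additivity condition $l_x(g) = l_x(u) + l_x(u^{-1}g)$, so by the betweenness criterion $ux \in [x,gx]$; likewise $ux \in [x,hx]$. Therefore $ux \in [x,gx]\cap[x,hx] = [x,m]$, and since $d(x,ux) = l_x(u) = c(g,h) = d(x,m)$, the point $ux$ lies at the same distance from $x$ as $m$ on the same segment, forcing $ux = m$. Then $f := u$ satisfies $[x,fx] = [x,m] = [x,gx]\cap[x,hx]$, which is exactly regularity of the action with respect to $x$.

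The main obstacle is really just the geometric lemma in the first paragraph --- identifying $c(g,h)$ with the distance to the branch point $m$ and establishing $[x,gx]\cap[x,hx]=[x,m]$ from the tree axioms. Once that is in hand, both implications are a direct translation between the metric additivity encoded in the symbol $\circ$ and the betweenness of $ux$ (resp.\ $fx$) on the relevant geodesics, with no further cancellation argument required.
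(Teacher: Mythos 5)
Your proof is correct and follows essentially the same route as the paper's: both directions translate between the decomposition $g = u \circ g_1$ (metric additivity) and the betweenness of $ux$ on $[x,gx]$, with the identification of $c(g,h)$ as the distance from $x$ to the branch point doing the real work. The only cosmetic difference is that you make the median $m = Y(x,gx,hx)$ explicit and pin down $fx = m$ (respectively $ux = m$) directly, whereas the paper carries out the equivalent computation purely with distances, so no substantive comparison is needed.
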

\begin{proof} By definition, the length function $l_x$ is regular if for every $g, h \in G$ there
exists $f \in G$ such that $g = f g_1,\ h = f h_1$, where $l_x(f) = c(g, h)$ and $l_x(g) = l_x(f) +
l_x(g_1),\ l_x(h) = l_x(f) + l_x(h_1)$.

\smallskip

Suppose the action of $G$ is regular with respect to $x$. Then for $g,h \in G$ there exists $f \in
G$ such that $[x, f x] = [x, g x] \cap [x, h x]$. We have $[x, g x] = [x, f x] \cup [f x, g x],\ [x,
h x] = [x, f x] \cup [f x, h x]$ and $[f x, g x] = [x, (f^{-1} g)x] = l_x(f^{-1} g),\ [f x, h x] =
[x, (f^{-1} h) x] = l_x(f^{-1} h)$. Taking $g_1 = f^{-1} g,\ h_1 = f^{-1} h$ we have $l_x(g) = l_x(f)
+ l_x(g_1),\ l_x(h) = l_x(f) + l_x(h_1)$. Finally, since $c(g,h) = \frac{1}{2}(l_x(g) + l_x(h) -
l_x(g^{-1} h))$ and $l_x(g^{-1} h) = d(x, (g^{-1}h)x) = d(g x, h x) = d(f x, g x) + d(f x, h x)$ we
get $l_x(f) = c(g,h)$.

\smallskip

Suppose that $l_x$ is regular. Then from $g = f \circ g_1,\ h = f \circ h_1$ it follows that $[x,
g x] = [x, f x] \cup [f x, g x],\ [x, h x] = [x, f x] \cup [f x, h x]$. $l_x(f) = c(g,h) =
\frac{1}{2}(l_x(g) + l_x(h) - l_x(g^{-1} h))$, so $2d(x, f x) = d(x, g x) + d(x, h x) - d(x, (g^{-1}
h)x) = d(x, g x) + d(x, h x) - d(g x, h x)$. In other words,
$$d(g x, h x) = d(x, g x) + d(x, h x) - 2d(x, f x) = (d(x, g x) - d(x, f x)) + (d(x, h x) - d(x, f x))$$
$$ = d(f x, g x) + d(f x, h x)$$
which is equivalent to $[x, f x] = [x, g x] \cap [x, h x]$.
\end{proof}

\begin{lemma}
Let $G$ act minimally on a $\Lambda$-tree $\Gamma$. If the action of $G$ is regular with respect to
$x \in \Gamma$ then all branch points of $\Gamma$ are $G$-equivalent.
\end{lemma}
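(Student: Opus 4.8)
The plan is to prove the sharper statement that every branch point of $\Gamma$ actually lies in the orbit $Gx$; since all such points are then $G$-equivalent to the single point $x$, the lemma follows at once. The two ingredients I would combine are minimality, used to express an arbitrary point of $\Gamma$ through the orbit of $x$, and the regularity hypothesis in the form $[x,hx]\cap[x,kx]=[x,fx]$, used to pin down the relevant orbit point. As a preliminary step I would record the consequence of minimality: the convex hull $\bigcup_{g,h\in G}[gx,hx]$ of the orbit $Gx$ is a $G$-invariant subtree, so by minimality it equals all of $\Gamma$; hence every point of $\Gamma$ lies on a segment joining two points of $Gx$ (I will freely use the standard $\Lambda$-tree facts, e.g.\ existence of median points, from \cite{Ch1}).

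Next I would fix a branch point $v$. By definition there are at least three distinct directions (germs of nondegenerate segments) at $v$, represented by points $w_1,w_2,w_3$. The heart of the argument is to show that each such direction is in fact realized by a segment from $v$ to a point of the orbit $Gx$. Writing $w_i\in[a_ix,b_ix]$ via the convex-hull description and letting $m_i=Y(v,a_ix,b_ix)$ be the projection of $v$ onto that segment, a short computation in the tree shows that $[v,w_i]$ shares its initial direction at $v$ with at least one of $[v,a_ix]$, $[v,b_ix]$. Choosing such an orbit point $c_ix$ in each of the three directions, the segments $[v,c_1x]$, $[v,c_2x]$, $[v,c_3x]$ leave $v$ in three distinct directions, which forces $v$ to be their median point, $v=Y(c_1x,c_2x,c_3x)$.

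I would then use equivariance to move one vertex of this median triple to the base point. Applying $c_1^{-1}$ gives $c_1^{-1}v=Y(x,hx,kx)$ with $h=c_1^{-1}c_2$ and $k=c_1^{-1}c_3$. The median $Y(x,hx,kx)$ is precisely the endpoint of the maximal common initial segment $[x,hx]\cap[x,kx]$, i.e.\ the point at which the two geodesics issuing from $x$ diverge. By regularity of the action with respect to $x$ this intersection equals $[x,fx]$ for some $f\in G$, so $Y(x,hx,kx)=fx$, and therefore $v=c_1fx\in Gx$, as required.

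The main obstacle is the middle step: carefully justifying that every direction at a branch point is realized by a segment ending in the orbit $Gx$, so that $v$ is literally the median of three \emph{orbit} points rather than of three arbitrary points of $\Gamma$. This is where minimality is genuinely needed and where the projection and median bookkeeping in the $\Lambda$-tree must be carried out correctly. Once $v$ has been identified as such a median, the regularity axiom finishes the argument almost immediately.
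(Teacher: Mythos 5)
Your proposal is correct and follows essentially the same route as the paper: the paper's proof likewise uses minimality to span $\Gamma$ by the orbit $Gx$, asserts that a branch point $y$ satisfies $[x,y]=[x,gx]\cap[x,hx]$ for some $g,h\in G$, and then applies regularity to get $y=fx$. Your median bookkeeping (realizing each direction at $v$ by an orbit point, writing $v=Y(c_1x,c_2x,c_3x)$, and translating by $c_1^{-1}$) is just a careful justification of the step the paper states without detail, so the two arguments coincide in substance.
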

\begin{proof} From minimality of the action it follows that $\Gamma$ is spanned by the set of points
$G x = \{g x \mid g \in G\}$.

Now let $y$ be a branch point in $\Gamma$. It follows that there exist (not unique in general) $g,h
\in G$ such that $[x,y] = [x, g x] \cap [x, h x]$. From regularity of the action it follows that
there exists $f \in G$ such that $y = f x$. Hence, every branch point is $G$-equivalent to $x$ and
the statement of the lemma follows.
\end{proof}

\begin{lemma}
Let $G$ act on a $\Lambda$-tree $\Gamma$. If the action of $G$ is regular with respect to $x \in
\Gamma$ then it is regular with respect to any $y \in G x$.
\end{lemma}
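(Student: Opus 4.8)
The plan is to reduce regularity at an arbitrary orbit point $y = ax$ (with $a \in G$) to regularity at the base point $x$ by transporting the entire configuration along the isometry given by $a$. The key observation I would use is that each element of $G$ acts on $\Gamma$ by an isometry, and an isometry of a $\Lambda$-tree carries the segment $[p,q]$ to the segment $[bp, bq]$ (using the standard fact that in a $\Lambda$-tree the segment between two points is unique, which follows from (T1)--(T3)) and preserves intersections of subsets, so that $b(A \cap B) = (bA) \cap (bB)$ for all $A, B \subseteq \Gamma$ and all $b \in G$.

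First I would fix $y \in Gx$, say $y = ax$ for some $a \in G$, together with arbitrary $g, h \in G$; the goal is to produce $f \in G$ with $[y, fy] = [y, gy] \cap [y, hy]$. Applying the isometry $a^{-1}$ to this desired equality, and using $a^{-1}(gy) = a^{-1}(ga)x = (a^{-1}ga)x$ together with the analogous identities for $h$ and $f$, I would rewrite the target as
$$[x, (a^{-1}fa)x] = [x, (a^{-1}ga)x] \cap [x, (a^{-1}ha)x].$$
This exhibits the regularity problem at $y$ as the regularity problem at $x$ for the conjugated elements $g' = a^{-1}ga$ and $h' = a^{-1}ha$.

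Next I would invoke regularity with respect to $x$ applied to $g'$ and $h'$: by hypothesis there exists $f' \in G$ with $[x, f'x] = [x, g'x] \cap [x, h'x]$. Setting $f = a f' a^{-1}$ gives $a^{-1}fa = f'$, so the displayed equation holds with this $f$, and applying the isometry $a$ back to it, again using that $a$ preserves segments and intersections, yields exactly $[y, fy] = [y, gy] \cap [y, hy]$. Since $g, h$ and $y \in Gx$ were arbitrary, this establishes regularity with respect to $y$.

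I do not expect any genuine obstacle here: the statement is essentially the $G$-equivariance of the regularity condition, and the whole content is the bookkeeping of conjugation under the isometric action. The only point to state with care is the elementary lemma that an isometry of a $\Lambda$-tree commutes with taking intersections and sends $[p,q]$ to $[bp,bq]$, which I would either cite from \cite{Ch1} or verify directly from the definition of isometry and the tree axioms (T1)--(T3).
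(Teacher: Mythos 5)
Your proposal is correct and follows essentially the same route as the paper's own proof: write $y = ax$, apply the isometry $a^{-1}$ to reduce the regularity condition at $y$ to the regularity condition at $x$ for the conjugates $a^{-1}ga$ and $a^{-1}ha$, and conjugate the resulting witness back to get $f = af'a^{-1}$. Your extra care in justifying that isometries of a $\Lambda$-tree send $[p,q]$ to $[bp,bq]$ and commute with intersections (via uniqueness of segments) is left implicit in the paper but is exactly the right supporting fact.
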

\begin{proof} We have to show that for every $g, h \in G$ there exists $f \in G$ such that
$[y, f y] = [y, g y] \cap [y, h y]$. Since $y = t x$ for some $t \in G$ then we have to prove that
$[t x, (f t) x] = [t x, (gt) x] \cap [t x, (ht) x]$. The latter equality is equivalent to
$[x, (t^{-1} f t) x] = [x, (t^{-1} g t) x] \cap [x, (t^{-1} h t) x]$ which follows from regularity
of the action with respect to $x$.
\end{proof}

\begin{lemma}
Let $G$ act freely on a $\Lambda$-tree $\Gamma$ so that all branch points of $\Gamma$ are
$G$-equivalent. Then the action of $G$ is regular with respect to any branch point in $\Gamma$.
\end{lemma}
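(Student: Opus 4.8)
The plan is to reduce regularity at a branch point $x$ to a single geometric fact — that the ``divergence point'' of the two geodesics issuing from $x$ to $gx$ and $hx$ is itself a branch point — and then to invoke the hypothesis that all branch points lie in one $G$-orbit. So I fix a branch point $x \in \Gamma$, take arbitrary $g, h \in G$, and consider $[x, gx] \cap [x, hx]$. By axiom (T3) this intersection is a segment; since both $[x, gx]$ and $[x, hx]$ have $x$ as a common endpoint, the intersection is an initial piece of each, hence of the form $[x, y]$ for a (unique) point $y \in \Gamma$. Here I would lean on uniqueness of geodesics in a $\Lambda$-tree to make $y$ well defined. The whole problem then reduces to showing $y \in Gx$: once $f \in G$ satisfies $fx = y$, we get $[x, fx] = [x, gx] \cap [x, hx]$, which is exactly regularity at $x$.

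Next I would dispose of the degenerate cases by a short case split. If $y = x$ take $f = 1$; if $y = gx$ (so that $[x, gx] \subseteq [x, hx]$) take $f = g$; and symmetrically if $y = hx$ take $f = h$. Thus I may assume $y$ is distinct from each of $x$, $gx$, $hx$.

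The heart of the argument is then to verify that in this remaining case $y$ is a branch point. At $y$ one has the three segments $[y, x]$, $[y, gx]$, $[y, hx]$: the first runs back along $[x, y]$, while $[y, gx]$ and $[y, hx]$ leave $[x, y]$ precisely at $y$, by maximality of $y$ as the last common point of the two geodesics; the same maximality forces $[y, gx]$ and $[y, hx]$ to depart in distinct directions from one another. Hence $y$ carries at least three distinct directions and is a branch point. Freeness of the action enters here only to ensure that the points involved are genuinely distinct (for instance $gx \neq hx$ unless $g = h$, since otherwise $g^{-1}h$ would fix $x$), so that the three directions at $y$ do not collapse.

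Finally, since both $y$ and $x$ are branch points, the hypothesis that all branch points of $\Gamma$ are $G$-equivalent yields $f \in G$ with $y = fx$, which completes the proof. I expect the main obstacle to be the middle step — confirming that $y$ really is a branch point rather than merely the endpoint of one of the two segments — which is exactly why the careful case split and the use of uniqueness of geodesics (to locate $y$ and to read off the three directions at it) are required; once $y$ is known to be a branch point, the conclusion is immediate from the orbit hypothesis.
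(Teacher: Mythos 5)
Your proof is correct and follows essentially the same route as the paper's: both identify $[x,gx]\cap[x,hx]$ with $[x,y]$ where $y = Y(x,gx,hx)$ is the median of the tripod on $x$, $gx$, $hx$, use freeness to get $gx \neq hx$, conclude $y$ is a branch point, and invoke the orbit hypothesis to find $f$ with $fx = y$. Your explicit case split for the degenerate situations $y \in \{x, gx, hx\}$ is in fact more careful than the paper's proof, which only separates out $g = h$ and tacitly treats the tripod as non-degenerate.
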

\begin{proof} Let $x$ be a branch point in $\Gamma$ and $g, h \in G$. If $g = h$ then $[x, g x] \cap
[x, h x] = [x, g x]$ and $g$ is the required element. Suppose $g \neq h$. Since the action is free
then $g x \neq h x$ and we consider the tripod formed by $x, gx, hx$. Hence, $y = Y(x, gx, hx)$ is a
branch point in $\Gamma$ and by the assumption there exists $f \in G$ such that $y = f x$.
\end{proof}

\subsection{Infinite words and length functions}
\label{subs:2.1}

In this subsection at first we recall some notions from the theory of ordered abelian groups (for
all the details refer to the books \cite{Gl} and \cite{KopMed}) and then following \cite{MRS}
describe the construction of infinite words.

\smallskip

Let $\Lambda$ be a discretely ordered abelian group with the minimal positive element $1$. It is
going to be clear from the context if we are using $1$ as an element of $\Lambda$, or as an integer.
Let $X = \{x_i \mid i \in I\}$ be a set. Put $X^{-1} = \{x_i^{-1} \mid i \in I\}$ and $X^\pm = X
\cup X^{-1}$. A {\em $\Lambda$-word} is a function of the type
$$w: [1,\alpha_w] \to X^\pm,$$
where $\alpha_w \in \Lambda,\ \alpha_w \geqslant 0$. The element $\alpha_w$ is called the {\em
length} $|w|$ of $w$.

\smallskip

By $W(\Lambda,X)$ we denote the set of all $\Lambda$-words. Observe, that $W(\Lambda,X)$ contains an
empty $\Lambda$-word which we denote by $\varepsilon$.

Concatenation $uv$ of two $\Lambda$-words $u,v \in W(\Lambda,X)$ is a $\Lambda$-word of length $|u|
+ |v|$ and such that:
\[ (uv)(a) = \left\{ \begin{array}{ll}
\mbox{$u(a)$}  & \mbox{if $1 \leqslant a \leqslant |u|$} \\
\mbox{$v(a - |u|)$ } & \mbox{if $|u|  < a \leqslant |u| + |v|$}
\end{array}
\right. \]

A $\Lambda$-word $w$ is {\it reduced} if $w(\beta + 1) \neq w(\beta)^{-1}$ for each $1 \leqslant
\beta < |w|$. We denote by $R(\Lambda,X)$ the set of all reduced $\Lambda$-words. Clearly,
$\varepsilon \in R(\Lambda,X)$. If the concatenation $uv$ of two reduced $\Lambda$-words $u$ and
$v$ is also reduced then we write $uv = u \circ v$.

\smallskip

For $u \in W(\Lambda,X)$ and $\beta \in [1, \alpha_u]$ by $u_\beta$ we denote the restriction of $u$
on $[1,\beta]$. If $u \in R(\Lambda,X)$ and $\beta \in [1, \alpha_u]$ then
$$u = u_\beta \circ {\tilde u}_\beta,$$
for some uniquely defined ${\tilde u}_\beta$.

An element ${\rm com}(u,v) \in R(\Lambda,X)$ is called the ({\emph{longest}) {\it common initial
segment} of $\Lambda$-words $u$ and $v$ if
$$u = {\rm com}(u,v) \circ \tilde{u}, \ \ v = {\rm com}(u,v) \circ \tilde{v}$$
for some (uniquely defined) $\Lambda$-words $\tilde{u}, \tilde{v}$ such that $\tilde{u}(1) \neq
\tilde{v}(1)$.

Now, we can define the product of two $\Lambda$-words. Let $u,v \in R(\Lambda,X)$. If
${\rm com}(u^{-1}, v)$ is defined then
$$u^{-1} = {\rm com}(u^{-1},v) \circ {\tilde u}, \ \ v = {\rm com} (u^{-1},v) \circ {\tilde v},$$
for some uniquely defined ${\tilde u}$ and ${\tilde v}$. In this event put
$$u \ast v = {\tilde u}^{-1} \circ {\tilde v}.$$
The  product ${\ast}$ is a partial binary operation on $R(\Lambda,X)$.

\smallskip

An element $v \in R(\Lambda,X)$ is termed {\it cyclically reduced} if $v(1)^{-1} \neq v(|v|)$. We
say that an element $v \in R(\Lambda,X)$ admits a {\it cyclic decomposition} if $v = c^{-1} \circ u
\circ c$, where $c, u \in R(\Lambda,X)$ and $u$ is cyclically reduced. Observe that a cyclic
decomposition is unique (whenever it exists). We denote by $CR(\Lambda,X)$ the set of all cyclically
reduced words in $R(\Lambda,X)$ and by $CDR(\Lambda,X)$ the set of all words from $R(\Lambda,X)$
which admit a cyclic decomposition.

Let $I_{\Lambda}$ index the set of all convex subgroups of ${\Lambda}$. $I_{\Lambda}$ is linearly
ordered (see, for example, \cite{Ch1}): $i < j$ if and only
if ${\Lambda}_i < {\Lambda}_j$, and
$${\Lambda} = \bigcup_{i\in I_{\Lambda}} {\Lambda}_i.$$
We say that $g \in G$ {\em has the height} $i \in I_{\Lambda}$ and denote
$ht(g) = i$ if $|g| \in {\Lambda}_i$ and $|g| \notin {\Lambda}_j$ for any $j < i$.

\smallskip

Below we refer to $\Lambda$-words as {\it infinite words} usually omitting $\Lambda$ whenever it
does not produce any ambiguity.

The following result establishes the connection between infinite words and length functions.
\begin{theorem}
\label{co:3.1} \cite{MRS}
Let $\Lambda$ be a discretely ordered abelian group and $X$ be a set. Then any subgroup $G$ of
$CDR(\Lambda,X)$ has a free Lyndon length function with values in $\Lambda$ -- the restriction
$L|_G$ on $G$ of the standard length function $L$ on $CDR(\Lambda,X)$.
\end{theorem}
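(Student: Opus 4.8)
The plan is to verify directly that $L|_G$, with $L(w)=|w|=\alpha_w$, satisfies the five axioms (L1)--(L5); that $G$ is a group under $\ast$ is part of the hypothesis, so I only need to check the length properties. The key preliminary step is to express the Gromov product purely in the language of infinite words, namely to show that for all $g,f\in G$
$$c(g,f)=|{\rm com}(g,f)|.$$
To get this I would unwind the definition of the product: taking $u=g^{-1}$, $v=f$ in the formula for $\ast$ gives ${\rm com}(u^{-1},v)={\rm com}(g,f)$, so writing $g={\rm com}(g,f)\circ\tilde g$ and $f={\rm com}(g,f)\circ\tilde f$ we obtain $g^{-1}\ast f=\tilde g^{-1}\circ\tilde f$, whence $|g^{-1}f|=|g|+|f|-2|{\rm com}(g,f)|$. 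Substituting into $c(g,f)=\tfrac12(|g|+|f|-|g^{-1}f|)$ yields the displayed identity, and the whole theorem then reduces to statements about longest common initial segments. With this in hand, (L1) and (L2) are immediate ($\alpha_w\geqslant 0$, the identity is $\varepsilon$, and the inverse word has equal length), and (L4) is immediate as well, since $|{\rm com}(g,f)|=\alpha_{{\rm com}(g,f)}\in\Lambda$, so the product always lands in $\Lambda$, not merely in $\Lambda_{\mathbb Q}$.

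The combinatorial heart is (L3), which via the identity above reads: if $|{\rm com}(g,f)|>|{\rm com}(g,h)|$ then $|{\rm com}(g,h)|=|{\rm com}(f,h)|$. Set $b=|{\rm com}(g,h)|$. If $b<\min\{|g|,|h|\}$, then, using that $\Lambda$ is discretely ordered so that ``the next position'' $b+1$ makes sense, $g$ and $h$ agree on $[1,b]$ but $g(b+1)\neq h(b+1)$; since $|{\rm com}(g,f)|>b$ forces $|{\rm com}(g,f)|\geqslant b+1$, the word $f$ agrees with $g$ on $[1,b+1]$, giving $f(i)=g(i)=h(i)$ for $i\leqslant b$ and $f(b+1)=g(b+1)\neq h(b+1)$, so ${\rm com}(f,h)$ has length exactly $b$. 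The remaining case is when one of $g,h$ is an initial segment of the other: $g$ cannot be an initial segment of $h$ (that would force $|{\rm com}(g,f)|\leqslant|g|=b$, contradicting the hypothesis), so $h$ is an initial segment of $g$, and since $f$ agrees with $g$ past the end of $h$, the word $h$ is also an initial segment of $f$, giving ${\rm com}(f,h)=h$ of length $b$ again.

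Finally (L5) is exactly the place where the hypothesis $G\subseteq CDR(\Lambda,X)$ is essential. For $1\neq g\in G$ I would invoke the cyclic decomposition $g=c^{-1}\circ u\circ c$ with $u$ cyclically reduced; here $u\neq\varepsilon$, for otherwise $g=\varepsilon$. Cyclic reducedness ($u(1)^{-1}\neq u(|u|)$) guarantees that $u\circ u$ is reduced and that the only cancellation in computing $g\ast g$ is against the conjugating factor, so that $g^2=g\ast g=c^{-1}\circ u^2\circ c$. Hence $|g^2|=2|c|+2|u|>2|c|+|u|=|g|$, which is (L5). I expect the main obstacle to lie in (L3): the careful case analysis distinguishing genuine divergence from the prefix cases, together with the use of discreteness of $\Lambda$ to locate the point of divergence at position $b+1$, is the only genuinely delicate argument. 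By contrast (L5) is short once the cyclic decomposition is produced, and it is precisely the nonemptiness of the cyclically reduced core $u$ that explains why the statement is made for $CDR(\Lambda,X)$ rather than for all of $R(\Lambda,X)$.
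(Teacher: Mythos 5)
Your proof is correct and complete: the reduction $c(g,f)=|{\rm com}(g,f)|$ via the definition of $\ast$, the discreteness-based case analysis locating the divergence at position $b+1$ for (L3), and the cyclic-decomposition computation $g^2=c^{-1}\circ u^2\circ c$ for (L5) constitute a full verification of the axioms. The paper itself contains no proof of this statement (it is quoted from \cite{MRS}), and your argument is essentially the standard one given there, down to the observation that (L5) is precisely where membership in $CDR(\Lambda,X)$ rather than $R(\Lambda,X)$ is needed.
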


The converse of the theorem above is also true.

\begin{theorem}
\label{chis} \cite{Ch3}
Let $G$ have a free Lyndon length function $L : G \rightarrow \Lambda$, where $\Lambda$ is a
discretely ordered abelian group. Then there exists a set $X$ and a length preserving embedding
$\phi : G \rightarrow CDR(\Lambda,X)$, that is, $|\phi(g)| = L(g)$ for any $g \in G$.
\end{theorem}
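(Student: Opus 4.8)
The plan is to realize $G$ geometrically and then read infinite words off geodesics. Since $L$ satisfies (L1)--(L4), the Chiswell--Morgan--Shalen construction recalled above (\cite{Ch, MS}) produces a $\Lambda$-tree $(T,d)$, an action of $G$ on $T$ that is free because $L$ satisfies (L5), and a base point $x_0 \in T$ with $L = l_{x_0}$, so that $d(x_0, g x_0) = L(g)$ for all $g$. Because $\Lambda$ is discretely ordered with least positive element $1$, I call two points $y,y' \in T$ adjacent if $d(y,y') = 1$, and I let $E$ be the set of ordered adjacent pairs (directed edges), equipped with the reversal involution $(y,y') \mapsto (y',y)$ and the induced $G$-action. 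The alphabet $X$ will index the orbit set $E/G$, with reversal corresponding to $x \mapsto x^{-1}$; thus $\ell(y,y') := G\cdot(y,y')$ is a $G$-invariant labelling into $X^{\pm}$ satisfying $\ell(y',y) = \ell(y,y')^{-1}$.

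Next I define $\phi$. The segment $[x_0, g x_0]$ is the image of a unique isometry $\alpha_g : [0, L(g)] \to T$, so for each $\beta \in [1, L(g)]$ the points $\alpha_g(\beta-1)$ and $\alpha_g(\beta)$ are adjacent, and I set $\phi(g)(\beta) := \ell(\alpha_g(\beta-1), \alpha_g(\beta))$. This defines $\phi(g) \in W(\Lambda, X)$ with $|\phi(g)| = L(g)$, so $\phi$ is length-preserving by construction; note that no transfinite recursion is required, as the whole word is read off at once from $\alpha_g$, which matters since the order type of $[1,L(g)]$ may be complicated.

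All the verifications reduce to one elementary consequence of freeness: distinct directed edges sharing an initial point lie in distinct $G$-orbits (if $(y,y') = g\cdot(y,y'')$ then $gy=y$, so $g=1$ and $y'=y''$). Reducedness of $\phi(g)$ is then immediate, since the edge used at position $\beta+1$ and the reverse of the edge used at position $\beta$ share the initial point $\alpha_g(\beta)$; were they in a common orbit they would be equal, forcing $\alpha_g(\beta+1) = \alpha_g(\beta-1)$, which is false. For injectivity, suppose $g \neq g'$; by freeness $g x_0 \neq g' x_0$. If $L(g) \neq L(g')$ the words differ in length; otherwise the two segments have equal length but distinct endpoints, so they diverge at a last common point $z = \alpha_g(\delta)$ with $\delta < L(g)$, and the two edges leaving $z$ toward $gx_0$ and $g'x_0$ share the initial point $z$ and hence carry distinct labels, giving $\phi(g)(\delta+1) \neq \phi(g')(\delta+1)$.

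For the homomorphism property $\phi(gh) = \phi(g) \ast \phi(h)$ I would use $G$-invariance of $\ell$: the segment $[x_0, gh x_0]$ is the reduced concatenation of $[x_0, g x_0]$ with $[g x_0, gh x_0] = g\cdot[x_0, h x_0]$, and since $\ell(g\cdot e) = \ell(e)$ the label word along the second leg, read from $g x_0$, is exactly $\phi(h)$, while along the reversed first leg it is $\phi(g)^{-1}$. Matching the geometric cancellation at the branch point to the formal definition of $\ast$ through $\mathrm{com}(\phi(g)^{-1}, \phi(h))$ and $\circ$ --- equivalently, checking that the length of the backtrack equals $c(g^{-1},h)$ --- is the main obstacle, and it is exactly where (L4)--(L5) and the uniqueness of segments in a $\Lambda$-tree must be used carefully. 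Finally, to land in $CDR(\Lambda, X)$ I note that by (L5) and freeness every $g \neq 1$ acts as a hyperbolic isometry with an axis $A_g$; writing $Y$ for the projection of $x_0$ onto $A_g$, the segment $[x_0, g x_0]$ factors as $[x_0, Y] \cup [Y, gY] \cup g\cdot[Y, x_0]$, which by $G$-invariance of $\ell$ yields a cyclic decomposition $\phi(g) = c^{-1} \circ u \circ c$ with $u$ the cyclically reduced label along one period of the axis. Hence $\phi(g) \in CDR(\Lambda, X)$, and $\phi$ is the desired length-preserving embedding.
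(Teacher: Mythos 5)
The paper itself gives no proof of this theorem: it is quoted from Chiswell \cite{Ch3}, so there is nothing internal to compare against. Your argument is, in essence, a reconstruction of the standard proof from the literature: realize $(G,L)$ on a $\Lambda$-tree via \cite{Ch,MS}, use discreteness of $\Lambda$ to treat pairs of points at distance $1$ as directed edges, label edges by their $G$-orbits, and read words off the geodesics $[x_0,gx_0]$. The skeleton is correct, and your one-line consequence of freeness --- distinct directed edges with the same initial point lie in distinct orbits --- is exactly the engine that drives reducedness, injectivity, and divergence of labels; those three verifications are sound as you wrote them.

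Two points need shoring up. First, to split the orbit set $E/G$ as $X\sqcup X^{-1}$ you need the reversal involution to act on $E/G$ without fixed points, i.e.\ no $g$ with $g\cdot(y,y')=(y',y)$; this is not covered by your initial-point lemma, but it follows from the paper's definition of a free action (neither $g$ nor $g^2$ has a fixed point for $g\neq 1$): such a $g$ would satisfy $g^2y=y$, forcing $g=1$ and hence $y=y'$, contradicting $d(y,y')=1$. This is precisely where the ``no inversions'' half of freeness, i.e.\ axiom (L5), earns its keep. Second, the step you flag as ``the main obstacle'' is already within reach of the tools you set up and should not be left open: with $w=Y(x_0,gx_0,ghx_0)$ the median, one has $d(gx_0,w)=\frac{1}{2}\bigl(L(g)+L(h)-L(gh)\bigr)=c(g^{-1},h)$, which lies in $\Lambda$ by (L4); by $G$-invariance of the labelling, the words $\phi(g)^{-1}$ and $\phi(h)$, both read from $gx_0$, agree along $[gx_0,w]$, and at $w$ the next two edges (toward $x_0$ and toward $ghx_0$) are distinct edges with common initial point, hence carry distinct labels by your lemma. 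Therefore ${\rm com}(\phi(g)^{-1},\phi(h))$ is exactly the label of $[gx_0,w]$, and $\phi(g)\ast\phi(h)$ is defined and equals the label of $[x_0,w]\cup[w,ghx_0]=[x_0,ghx_0]$, i.e.\ $\phi(gh)$. Your final step into $CDR(\Lambda,X)$ is fine: every $g\neq 1$ is hyperbolic with axis $A_g$ by freeness, $[x_0,gx_0]$ meets $A_g$ exactly in $[Y,gY]$ with $Y$ the projection of $x_0$ (standard $\Lambda$-tree theory, see \cite{Ch1}), and cyclic reducedness of $u$ follows because $u\circ u$ labels $[Y,g^2Y]$ inside the linear subtree $A_g$, so no cancellation can occur at the seam.
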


\begin{cor}
\label{chis-cor} \cite{Ch3}
Let $G$ have a free Lyndon length function $L : G \rightarrow \Lambda$, where $\Lambda$ is an
arbitrary ordered abelian group. Then there exists an embedding $\phi : G \rightarrow CDR(\Lambda',
X)$, where $\Lambda' = \mathbb{Z} \oplus \Lambda$ is discretely ordered with respect to the right
lexicographic order and $X$ is some set, such that, $|\phi(g)| = (0,L(g))$ for any $g \in G$.
\end{cor}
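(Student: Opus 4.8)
The plan is to reduce the statement to Theorem \ref{chis}, which already handles the \emph{discretely} ordered case, by enlarging the value group $\Lambda$ to a discretely ordered one into which $\Lambda$ embeds in an order-preserving way. The natural candidate is exactly the group named in the statement, $\Lambda' = \mathbb{Z} \oplus \Lambda$ equipped with the right lexicographic order, together with the map $\iota : \Lambda \to \Lambda'$ given by $\iota(\lambda) = (0,\lambda)$.

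First I would check that $\Lambda'$ is discretely ordered with minimal positive element $(1,0)$. Under the right lexicographic order the $\Lambda$-coordinate is the more significant one, so a nonzero element $(m,\lambda)$ is positive precisely when either $\lambda > 0$, or $\lambda = 0$ and $m > 0$. Any element with $\lambda > 0$ dominates $(n,0)$ for every integer $n$, while among the elements with $\lambda = 0$ the smallest positive one is $(1,0)$; hence $(1,0)$ is the minimal positive element of $\Lambda'$, and $\Lambda'$ is discretely ordered as required. The same computation shows that $\iota$ is an order-preserving group embedding: $(0,\lambda) < (0,\lambda')$ holds in the right lexicographic order if and only if $\lambda < \lambda'$ in $\Lambda$.

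Next I would define $L' := \iota \circ L : G \to \Lambda'$, so that $L'(g) = (0, L(g))$, and verify that $L'$ is again a free Lyndon length function, now with values in the discretely ordered group $\Lambda'$. Because $\iota$ is an order-preserving homomorphism, it transports each of the axioms verbatim. For (L1) we have $L'(g) = (0,L(g)) \geqslant (0,0)$ and $L'(1) = (0,0)$; (L2) is immediate; for the Gromov products one computes $c'(g,f) = \iota\big(c(g,f)\big) = (0,c(g,f))$, so both the order comparisons in (L3) and the membership statement (L4) follow from the corresponding facts for $L$ together with order-preservation and the fact that $c(g,f) \in \Lambda$; finally (L5) follows because $L(g^2) > L(g)$ forces $(0,L(g^2)) > (0,L(g))$. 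Thus $L'$ satisfies (L1)--(L5) and is a free Lyndon length function into a discretely ordered abelian group.

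At this point the statement follows by applying Theorem \ref{chis} to $L' : G \to \Lambda'$: it yields a set $X$ and a length-preserving embedding $\phi : G \to CDR(\Lambda', X)$ with $|\phi(g)| = L'(g) = (0, L(g))$ for every $g \in G$, which is exactly the desired conclusion. The only points genuinely requiring care are the two order-theoretic verifications above — that the right lexicographic order makes the $\mathbb{Z}$-factor least significant (so that $\Lambda'$ is discrete with minimal positive element $(1,0)$) and that $\iota$ is order-preserving so the length-function axioms pass to $L'$ unchanged; neither presents a real obstacle, so the corollary is essentially a formal consequence of Theorem \ref{chis}.
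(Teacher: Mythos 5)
Your proposal is correct and is essentially the argument the paper intends: the corollary is stated as an immediate consequence of Theorem \ref{chis} (cited to \cite{Ch3} without a written proof), obtained exactly by embedding $\Lambda$ into $\Lambda' = \mathbb{Z} \oplus \Lambda$ via $\lambda \mapsto (0,\lambda)$, noting that the right lexicographic order makes $\Lambda'$ discretely ordered with minimal positive element $(1,0)$, and applying the theorem to $L' = \iota \circ L$. Your verifications of discreteness, order-preservation, and the transfer of axioms (L1)--(L5) (including the passage of the Gromov product through the divisible hulls for (L3)--(L4)) are precisely the routine checks this reduction requires.
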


Theorem \ref{co:3.1}, Theorem \ref{chis}, and Corollary \ref{chis-cor} show that a group has a free
Lyndon length function if and only if it embeds into a set of infinite words and this embedding
preserves the length. Moreover, it is not hard to show that this embedding also preserves regularity
of the length function.

\begin{theorem}
\label{chis-cor-1} \cite{KhMS}
Let $G$ have a free regular Lyndon length function $L : G \rightarrow \Lambda$, where $\Lambda$ is
an arbitrary ordered abelian group. Then there exists an embedding $\phi : G \rightarrow R(\Lambda',
X)$, where $\Lambda'$ is a discretely ordered abelian group and $X$ is some set, such that, the
Lyndon length function on $\phi(G)$ induced from $R(\Lambda',X)$ is regular.
\end{theorem}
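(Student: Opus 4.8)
The plan is to show that the embedding already produced by Corollary \ref{chis-cor} automatically respects the regularity axiom, so that no genuinely new construction is needed: the entire content reduces to verifying (L6) on the image. First I would apply Corollary \ref{chis-cor} to the free regular length function $L : G \to \Lambda$, obtaining a length-preserving group embedding $\phi : G \to CDR(\Lambda', X) \subseteq R(\Lambda', X)$, where $\Lambda' = \mathbb{Z} \oplus \Lambda$ is discretely ordered under the right lexicographic order and $|\phi(g)| = (0, L(g))$ for every $g \in G$. Write $L'$ for the length function on $\phi(G)$ induced from $R(\Lambda', X)$, so that $L'(\phi(g)) = |\phi(g)| = (0, L(g))$. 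The whole assertion is that $L'$ satisfies (L6), and note that regularity of $L$ as a function into $\Lambda$ was not used in building $\phi$, so it remains available as extra input.

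The key observation is that, for the standard length on reduced $\Lambda'$-words, the Gromov product is realized by the longest common initial segment: for reduced $u,v$ one has $c(u,v) = |{\rm com}(u,v)|$, since the cancellation in $u^{-1} \ast v$ takes place exactly along the common prefix of $u$ and $v$. Consequently, regularity of $L'$ is equivalent to the statement that for all $g,f \in G$ the word ${\rm com}(\phi(g),\phi(f))$ again lies in $\phi(G)$. Because $\phi$ is a homomorphism, $\phi(g)^{-1} \ast \phi(f) = \phi(g^{-1}f)$, and length-preservation then gives $c(\phi(g),\phi(f)) = \tfrac{1}{2}\bigl((0,L(g)) + (0,L(f)) - (0,L(g^{-1}f))\bigr) = (0, c(g,f))$, where $c(g,f) \in \Lambda$ by (L4).

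Now I would feed in the regularity of $L$ on $G$: axiom (L6) yields $u, g_1, f_1 \in G$ with $g = u \circ g_1$, $f = u \circ f_1$ and $L(u) = c(g,f)$. Applying $\phi$, the relation $L(g) = L(u) + L(g_1)$ forces $|\phi(g)| = |\phi(u)| + |\phi(g_1)|$, so no cancellation can occur in the product $\phi(u) \ast \phi(g_1)$; that is, $\phi(g) = \phi(u) \circ \phi(g_1)$ as reduced words, and similarly $\phi(f) = \phi(u) \circ \phi(f_1)$. Hence $\phi(u)$ is a common initial segment of $\phi(g)$ and $\phi(f)$ of length $(0,L(u)) = (0,c(g,f)) = c(\phi(g),\phi(f)) = |{\rm com}(\phi(g),\phi(f))|$. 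Since an initial segment of a reduced word is determined by its length, $\phi(u) = {\rm com}(\phi(g),\phi(f)) \in \phi(G)$. Taking $\phi(u), \phi(g_1), \phi(f_1)$ as witnesses establishes (L6) for $L'$.

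I expect no deep obstacle: the substantive analytic work sits entirely inside Corollary \ref{chis-cor}, and the remainder is the bookkeeping that a length-preserving homomorphism transports both $\circ$-decompositions and Gromov products faithfully into the infinite-word model. The one step deserving care is the implication ``equal length forces no cancellation,'' which is where the precise definition of $\ast$ and the behaviour of concatenation lengths are used; once that is in hand, identifying $\phi(u)$ with ${\rm com}(\phi(g),\phi(f))$ and reading off regularity is immediate.
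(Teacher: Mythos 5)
Your proposal is correct and takes exactly the route the paper intends: the paper offers no written proof of Theorem \ref{chis-cor-1}, citing \cite{KhMS} and remarking just beforehand that ``it is not hard to show that this embedding also preserves regularity of the length function,'' i.e., that the length-preserving embedding of Corollary \ref{chis-cor} transports the axiom (L6). Your verification---that $c(\phi(g),\phi(f)) = (0,c(g,f)) = |{\rm com}(\phi(g),\phi(f))|$ (where ${\rm com}$ exists because $\phi(g)^{-1}\ast\phi(f)$ is defined, $\phi(G)$ being a subgroup of $CDR(\Lambda',X)$), and that additivity of lengths forces $\phi(u\circ g_1)=\phi(u)\circ\phi(g_1)$ so that the witnesses for (L6) in $G$ map to witnesses in $\phi(G)$---is precisely the bookkeeping that remark leaves to the reader, with the one delicate step (equal length forces no cancellation) correctly identified and justified.
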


\section{Generalized equations}
\label{se:4-1}

From now on we assume that $G = \langle X \mid R \rangle$ is a finitely presented group which acts
freely and regularly on a $\Lambda$-tree, where $\Lambda$ is a discretely ordered abelian group, or,
equivalently, $G$ can be represented by $\Lambda$-words over some alphabet $Z$ and the length
function on $G$ induced from $CDR(\Lambda, Z)$ is regular. Let us fix the embedding $\xi : G
\hookrightarrow CDR(\Lambda ,Z)$ for the rest of this section.

\subsection{The notion of a generalized equation}
\label{subs:notion_gen_eq}

\begin{definition}
A {\em combinatorial generalized equation} $\Omega$ (which is convenient to visualize as shown on
the picture below)

\begin{figure}[htbp]
\label{pic:example}
\centering{\mbox{\psfig{figure=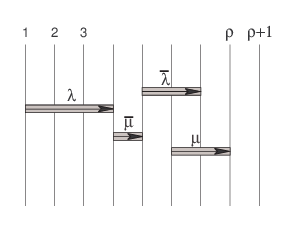}}}
\caption{A typical generalized equation.}
\end{figure}

consists of the following objects.

\begin{enumerate}

\item A finite set of {\em bases} ${\mathcal M} = BS(\Omega)$. The set of bases ${\mathcal M}$
consists of $2n$ elements ${\mathcal M} = \{\mu_1, \ldots, \mu_{2n}\}$. The set ${\mathcal M}$ comes
equipped with two functions: a function $\varepsilon: {\mathcal M} \rightarrow \{1,-1\}$ and an
involution $\Delta : {\mathcal M} \rightarrow {\mathcal M}$ (that is, $\Delta$ is a bijection such
that $\Delta^2$ is an identity on ${\mathcal M}$). Bases $\mu$ and $\overline\mu = \Delta(\mu)$ are
called {\it dual bases}. We denote bases by letters $\mu, \lambda$, etc.

\item A set of {\em boundaries} $BD = BD(\Omega) = \{1, 2, \ldots, \rho + 1\}$, that is, integer
points of the interval $I = [1, \rho + 1]$. We use letters $i,j$, etc. for boundaries.

\item Two functions $\alpha : BS \rightarrow BD$ and $\beta : BS \rightarrow BD$. We call
$\alpha(\mu)$ and $\beta(\mu)$ the {\em initial and terminal boundaries} of the base $\mu$ (or
endpoints of $\mu$). These functions satisfy the following conditions: for every base $\mu \in BS$:
$\alpha(\mu) < \beta(\mu)$ if $\varepsilon(\mu) = 1$ and $\alpha(\mu) > \beta(\mu)$ if
$\varepsilon(\mu) = -1$.

\item A set of {\em boundary connections} $(p,\lambda , q),$ where $p$ is a boundary on $\lambda$
(that is a number between $\alpha(\lambda)$ and $\beta(\lambda)$) and $q$ on $\bar\lambda$. In this
case we say that $p$ and $q$ are $\lambda$-tied. If $(p,\lambda ,q)$ is a boundary connection then
$(q,\overline\lambda , p)$ is also a boundary connection. (The meaning of boundary connections will
be explained in the transformation (ET5)).
\end{enumerate}
\end{definition}

With a combinatorial generalized equation $\Omega$ one can canonically associate a system of
equations in {\em variables} $h = (h_1, \ldots, h_\rho)$ (variables $h_i$ are also called {\it
items}). This system is called a {\em generalized equation}, and (slightly abusing the terminology)
we denote it by the same symbol $\Omega$, or $\Omega(h)$ specifying the variables it depends on. The
generalized equation $\Omega$ consists of the following two types of equations.

\begin{enumerate}
\item Each pair of dual  bases $(\lambda, \overline\lambda)$ provides an equation
$$[h_{\alpha(\lambda)} h_{\alpha(\lambda) + 1} \ldots h_{\beta(\lambda ) - 1}]^{\varepsilon
(\lambda)} = [h_{\alpha(\overline\lambda)} h_{\alpha(\overline\lambda) + 1} \ldots h_{\beta(\overline
\lambda ) - 1}]^{\varepsilon(\overline\lambda)}.$$
These equations are called {\em basic equations}.

\item Every boundary connection $(p,\lambda,q)$ gives rise to a {\em boundary equation}
$$[h_{\alpha(\lambda)} h_{\alpha(\lambda) + 1} \cdots h_{p-1}] = [h_{\alpha(\overline\lambda)}
h_{\alpha(\overline\lambda ) + 1} \cdots h_{q-1}],$$
if $\varepsilon(\lambda) = \varepsilon(\overline\lambda)$ and
$$[h_{\alpha (\lambda )} h_{\alpha(\lambda ) + 1} \cdots h_{p-1}] = [h_{q} h_{q+1} \cdots h_{\alpha (
\overline\lambda)-1}]^{-1} ,$$
if $\varepsilon(\lambda)= -\varepsilon(\overline\lambda).$
\end{enumerate}

\begin{remark}
We  assume that every generalized equation comes from a combinatorial one.
\end{remark}

Given a generalized equation $\Omega(h)$ one can define the {\em group of $\Omega(h)$}
$$G_\Omega = \langle h \mid \Omega(h) \rangle.$$

\begin{definition}
Let $\Omega(h) = \{L_1(h) = R_1(h), \ldots, L_s(h) = R_s(h)\}$ be a generalized equation in
variables $h = (h_1, \ldots, h_\rho)$. A set $U = (u_1, \ldots, u_\rho) \subseteq R(\Lambda,Z)$
of nonempty $\Lambda$-words is called a {\em solution} of $\Omega$ if:
\begin{enumerate}
\item all words $L_i(U), R_i(U)$ are reduced,
\item $L_i(U) =  R_i(U),\ i \in [1,s]$.
\end{enumerate}
\end{definition}

Observe that a solution $U$ of $\Omega(h)$ defines a homomorphism $\xi_U : G_\Omega \to R(\Lambda,Z)$
induced by the mapping $h_i \to u_i,\ i \in [1,\rho]$ since after this substitution all the equations
of $\Omega(h)$ turn into identities in $R(\Lambda,Z)$.

If we specify a particular solution $U$ of a generalized equation $\Omega$ then we use a pair
$(\Omega, U)$.

\begin{definition}
A cancelation table $C(U)$ of a solution $U = (u_1, \ldots, u_\rho)$ is defined as follows
$$C(U) = \{ h_i^\epsilon h_j^\sigma \mid {\rm there\ is\ cancelation\ in\ the\ product}\ u_i^\epsilon
\ast u_j^\sigma,\ {\rm where}\  \epsilon, \sigma = \pm 1 \}.$$
\end{definition}

\begin{definition}
A solution $U^+$ of a generalized equation $\Omega$ is called {\em consistent} with a solution $U$
if $C(U^+) \subseteq C(U)$.
\end{definition}

\subsection{From a finitely presented group to a generalized equation}
\label{subs:constr_ge}

Recall that $G = \langle X \mid R \rangle$ is finitely presented and let $X = \{x_1, \ldots, x_n\}$
and $R = \{r_1(X),\ldots, r_m(X)\}$. Adding, if necessary, auxiliary generators, we can assume that
every relator involves at most three generators.

Since $\xi$ is a homomorphism it follows that after the substitution $x_i \rightarrow \xi(x_i),\ i
\in [1,n]$ all products $r_i(\xi(X)),\ i \in [1,m]$ cancel out. Hence, we have finitely many {\em
cancellation diagrams} over $CDR(\Lambda ,Z)$, which give rise to a generalized equation $\Omega$
corresponding to the embedding $\xi : G \hookrightarrow CDR(\Lambda, Z)$.

The precise definition and all the details concerning cancellation diagrams over $CDR(\Lambda ,Z)$
can be found in \cite{KhMS}. Briefly, a cancellation diagram for $r_i(\xi(X))$ can be viewed as a
finite directed tree $T_i$ in which every positive edge $e$ has a label $\lambda_e$ so that every
occurrence $x^\delta,\ \delta \in \{-1,1\}$ of $x \in X$ in $r_i$ corresponds to a reduced path
$e_1^{\epsilon_1} \cdots e_k^{\epsilon_k}$, where $\epsilon_i \in \{-1,1\}$, in $T_i$ and
$\xi(x^\delta) = \lambda_{e_1}^{\epsilon_1} \circ \ldots \circ \lambda_{e_k}^{\epsilon_k}$. In other
words, each $\lambda_e$ is a piece of some generator of $G$ viewed as a $\Lambda$-word. Moreover,
we assume that $|\lambda_e|$ is known (since we know the homomorphism $\xi$).

\begin{figure}[htbp]
\label{triangle}
\centering{\mbox{\psfig{figure=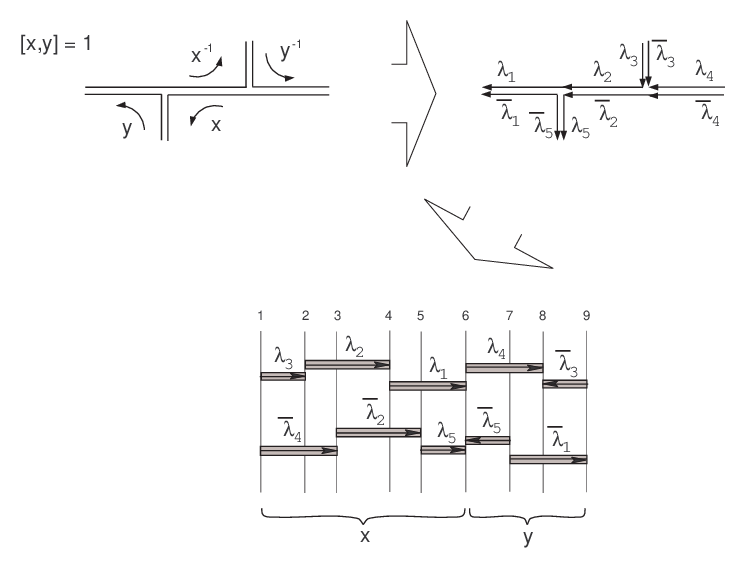}}}
\caption{From the cancellation diagram for the relation $[x,y] = 1$ to the generalized equation.}
\end{figure}

Now we would like to construct a generalized equation $\Omega_i$ corresponding to $T_i$.
Denote by $X(T_i)$ all generators of $G$ which appear in $r_i$. Next, consider a segment $J$ in
$\Lambda$ of length
$$\sum_{x \in X(T_i)} |\xi(x)|$$
which is naturally divided by the lengths of $\xi(x),\ x \in X(T_i)$ into subsegments with respect to
any given order on $X(T_i)$. Since every $\xi(x),\ x \in X(T_i)$ splits into at least one reduced
product $\lambda_{e_1}^{\epsilon_1} \circ \ldots \circ \lambda_{e_k}^{\epsilon_k}$, every such
splitting gives a subdivision of the corresponding subsegment of $J$. Hence, we subdivide $J$
using all product representations of all $\xi(x),\ x \in X(T_i)$. As a result we obtain a
subdivision of $J$ into $\rho_i$ items whose endpoints become boundaries of $\Omega_i$.
Observe that each $\lambda_e$ appears exactly twice in the products representing some $\xi(x),\ x
\in X(T_i)$ and each such entry covers several adjacent items of $J$. This pair of entries defines
a pair of dual bases $(\lambda_e, \overline{\lambda_e})$.
Hence,
$${\mathcal M}_i = BS(\Omega_i) = \{ \lambda_e,\ \overline{\lambda_e} \mid e \in E(T_i)\}.$$
$\epsilon(\lambda_e)$ depends on the sign of $\lambda_e$ in the corresponding product representing
a variable from $X(T_i)$ (similarly for $\overline{\lambda_e}$).

In the same way one can construct $T_i$ and the corresponding $\Omega_i$ for each $r_i,\ i \in [1,m]$.
Combining all combinatorial generalized equations $\Omega_i,\ i \in [1,m]$ we obtain the equation
$\Omega$ with items $h_1,\ldots,h_\rho$ and bases ${\mathcal M} = \cup_i {\mathcal M}_i$. By definition
$$G_\Omega = \langle h_1, \ldots, h_\rho \mid \Omega(h_1,\ldots,h_\rho)\rangle.$$
At the same time, since each item can be obtained in the form
$$(\lambda_{i_1}^{\epsilon_1} \circ \ldots \circ \lambda_{i_k}^{\epsilon_k}) \ast
(\lambda_{j_1}^{\delta_1} \circ \ldots \circ \lambda_{j_l}^{\delta_l})^{-1},$$
it follows that $G_\Omega$ can be generated by ${\mathcal M}$ with the relators obtained by
rewriting $\Omega(h_1,\ldots,h_\rho)$ in terms of ${\mathcal M}$.

It is possible to transform the presentation $\langle h_1, \ldots, h_\rho \mid \Omega\rangle$ into
$\langle X \mid R \rangle$ using Tietze transformations as follows. From the cancellation diagrams
constructed for each relator in $R$ it follows that $x_i = w_i(h_1,\ldots,h_\rho) = w_i(\overline{h}
),\ i \in [1,n]$. Hence
$$\langle h_1, \ldots, h_\rho \mid \Omega\rangle \simeq \langle h_1, \ldots, h_\rho, X \mid \Omega
\cup \{x_i = w_i(\overline{h}),\ i \in [1,n]\} \rangle.$$
Next, from the cancellation diagrams it follows that $R$ is a set of consequences of $\Omega \cup
\{x_i = w_i(\overline{h}),\ i \in [1,n]\}$, hence,
$$\langle h_1, \ldots, h_\rho \mid \Omega\rangle \simeq \langle h_1, \ldots, h_\rho, X \mid \Omega
\cup \{x_i = w_i(\overline{h}),\ i \in [1,n]\} \cup R \rangle.$$
Finally, since the length function on $G$ is regular, for each $h_i$ there exists a word $u_i(X)$
such that $h_i = u_i(\xi(X))$ and all the equations in $\Omega \cup \{x_i = w_i(\overline{h}),\ i
\in [1,n]\}$ follow from $R$ after we substitute $h_i$ by $u_i(X)$ for each $i$. It follows that
$$\langle h_1, \ldots, h_\rho, X \mid \Omega \cup \{x_i = w_i(\overline{h}),\ i \in [1,n]\} \cup R
\rangle$$
$$\simeq \langle h_1, \ldots, h_\rho, X \mid \Omega \cup \{x_i = w_i(\overline{h}),\ i \in [1,n]\}
\cup R \cup \{h_j = u_j(X),\ j \in [1,\rho]\} \rangle \simeq \langle X \mid R \rangle.$$
It follows that $G \simeq G_\Omega$.

Let $B$ be the standard CW $2$-complex for $G$ corresponding to the presentation $\langle {\mathcal M}
\mid \Omega({\mathcal M}) \rangle$. Let $C(\Omega)$ be a CW $2$-complex obtained from $G_{\Omega}$ by
creating for each relator a polygon with boundary labeled by this relator and gluing these polygons
together along edges labeled by the same letter. $C(\Omega)$ may have several vertices. If we add arcs
joining all the vertices in $C(\Omega)$ we obtain a CW complex $D(\Omega)$ which has the same
fundamental group as $B$ (namely, $G$).

\begin{lemma}
$G$ is a free product of $\pi_1(C(\Omega))$ and, possibly, a free group.
\end{lemma}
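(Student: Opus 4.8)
The plan is to read the statement as a purely topological computation: since it was shown above that $\pi_1(D(\Omega)) \cong \pi_1(B) \cong G$, and $D(\Omega)$ is obtained from $C(\Omega)$ by attaching finitely many arcs (that is, $1$-cells whose endpoints are vertices of $C(\Omega)$), it suffices to track what attaching these arcs does to the fundamental group. The only tool needed is the van Kampen theorem, or equivalently the elementary observation that an arc is contractible, so collapsing it produces a homotopy equivalent complex.

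First I would fix notation for $C(\Omega)$, which need not be connected: write $C_1, \dots, C_k$ for its connected components and read $\pi_1(C(\Omega))$ as the free product $\pi_1(C_1) * \cdots * \pi_1(C_k)$ taken over chosen basepoints, which is the natural meaning once the pieces are assembled into the connected complex $D(\Omega)$. I would then attach the arcs one at a time, keeping track of the two possibilities. If an arc joins vertices lying in two distinct components of the complex built so far, collapsing that (contractible) arc identifies the components at a single point, i.e.\ yields their wedge, and van Kampen gives the free product of the two fundamental groups. If instead an arc joins two vertices of the same component, collapsing it creates a new loop and the fundamental group acquires a free factor $\mathbb{Z}$. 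Choosing to attach first a subfamily of $k-1$ arcs that successively merge the $k$ components (a spanning tree in the graph whose vertices are the $C_i$ and whose edges are the attached arcs), these contribute exactly the free product $\pi_1(C_1) * \cdots * \pi_1(C_k)$ and, forming a tree, create no new loops; each of the remaining arcs then has both endpoints in the now-connected complex and contributes one free generator. Hence
$$\pi_1(D(\Omega)) \cong \pi_1(C_1) * \cdots * \pi_1(C_k) * F_r \cong \pi_1(C(\Omega)) * F_r,$$
where $F_r$ is free of rank $r$ equal to the number of arcs not used in the spanning tree. Combining this with $\pi_1(D(\Omega)) \cong G$ yields the lemma, with $F_r$ possibly trivial.

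The only delicate point, and the one I would state carefully, is the separation of the $k-1$ merging arcs from the loop-creating ones: this is exactly the choice of a spanning tree in the graph of components, after which the standard fact that attaching a $1$-cell realizes either a free product (distinct components) or a free factor $\mathbb{Z}$ (same component) finishes the argument. The homotopy-theoretic input is routine once the arcs are added in this order, so I expect no real obstacle beyond this bookkeeping and the convention that $\pi_1$ of the possibly disconnected $C(\Omega)$ means the free product of the fundamental groups of its components.
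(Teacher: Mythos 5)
Your proposal is correct and is exactly the argument the paper leaves implicit: its entire proof is the one line ``Follows from the construction,'' and the intended content is precisely your observation that $D(\Omega)$ arises from $C(\Omega)$ by attaching arcs, which via a spanning tree on the components contribute either free-product mergers or free factors $\mathbb{Z}$, so that $G \cong \pi_1(D(\Omega)) \cong \pi_1(C(\Omega)) * F_r$. Your careful handling of the possibly disconnected case (reading $\pi_1(C(\Omega))$ as the free product over components) is the natural convention and fills the only bookkeeping gap in the paper's terse statement.
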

\begin{proof}
Follows from the construction.
\end{proof}

\begin{remark}
\label{nonreg}
Let $\widetilde G$ be a finitely presented group with a free length function in $\Lambda$ (not
necessary regular). It can be embedded isometrically in the group $\widehat G$ with a free regular
length function in $\Lambda$ by \cite{Ch2}. That group can be embedded in $R(\Lambda', X)$. When we
make a generalized equation $\Omega$ for $\widetilde G$, we have to add only finite number of
elements from $\widehat G$. Let $G$ be a subgroup generated in $\widehat G$ by $\widetilde G$ and
these elements. Then $G$ is the quotient of $G_{\Omega}$ containing $\widetilde G$ as a subgroup.
\end{remark}

\section{Elimination process}
\label{se:5}

In this section at first we introduce transformations of combinatorial generalized equations, then we
show how they can be applied to ``simplify'' the given equation $\Omega$ and to extract information
about the structure of the corresponding group $G_\Omega$.

\subsection{Elementary transformations}
\label{se:5.1}

In this subsection we describe {\em elementary transformations} of generalized equations. Let
$(\Omega, U)$ be a generalized equation together with a solution $U$. An elementary transformation
(ET) associates to a generalized equation $(\Omega, U)$ a generalized equation $(\Omega_1, U_1)$ and
an epimorphism $\pi : G_\Omega \rightarrow G_{\Omega_1}$ such that for the solution $U_1$ the
following diagram commutes

\[\begin{diagram}
\label{diag:1} \node{G_\Omega} \arrow{e,t}{\pi}
\arrow{s,l}{\xi_U} \node{G_{\Omega_1}}
\arrow{sw,r}{\xi_{U_1}}\\
\node{R(\Lambda,Z)}
\end{diagram}
\]

One can view (ET) as a mapping $ET : (\Omega,U) \rightarrow (\Omega_1, U_1)$.

\begin{enumerate}
\item[(ET1)] {\em (Cutting a base (see Fig. \ref{ET1}))}.
Let $\lambda$ be a base in $\Omega$ and $p$ an internal boundary of $\lambda$ (that is, $p \neq
\alpha(\lambda), \beta(\lambda)$) with a boundary connection $(p, \lambda, q)$. Then we cut the
base $\lambda$ at $p$ into two new bases $\lambda_1$ and $\lambda_2$, and cut $\overline\lambda$ at
$q$ into the bases $\overline\lambda_1$ and $\overline\lambda_2$.

\begin{figure}[htbp]
\centering{\mbox{\psfig{figure=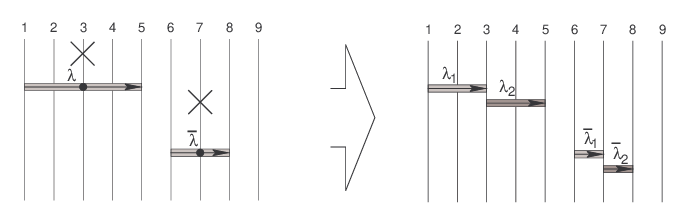}}}
\caption{Elementary transformation (ET1).}
\label{ET1}
\end{figure}

\item[(ET2)] {\em (Transfering a base (see Fig. \ref{ET2}))}.
If a base $\lambda$ of $\Omega$ contains a base $\mu$ (that is, $\alpha(\lambda) \leqslant \alpha(\mu) <
\beta(\mu) \leqslant \beta(\lambda)$) and all boundaries on $\mu$ are $\lambda$-tied by boundary some
connections then we transfer $\mu$ from its location on the base $\lambda$ to the corresponding
location on the base $\overline\lambda$.

\begin{figure}[htbp]
\centering{\mbox{\psfig{figure=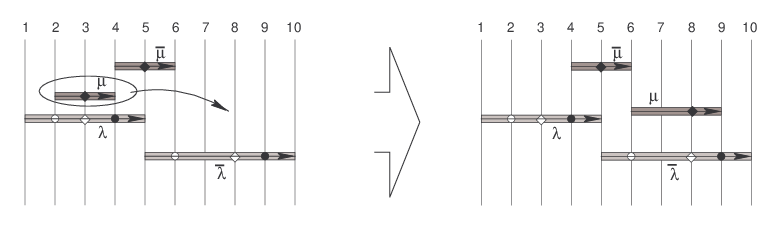}}}
\caption{Elementary transformation (ET2).}
\label{ET2}
\end{figure}

\item[(ET3)] {\em (Removal of a pair of  matched bases (see Fig. \ref{ET3}))}.
If the bases $\lambda$ and $\overline\lambda$ are {\em  matched} (that is, $\alpha(\lambda) =
\alpha(\overline\lambda), \beta(\lambda) = \beta(\overline\lambda)$) then we remove $\lambda,
\overline\lambda$ from $\Omega$.

\begin{figure}[htbp]
\centering{\mbox{\psfig{figure=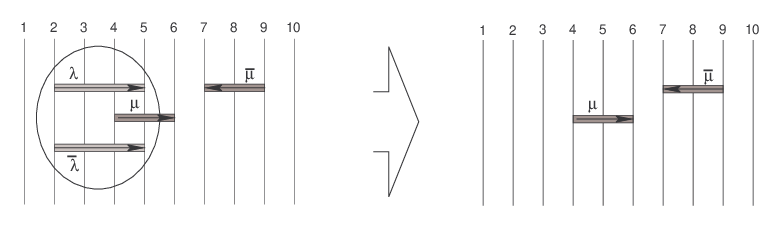}}}
\caption{Elementary transformation (ET3).}
\label{ET3}
\end{figure}

\begin{remark}
Observe, that $\Omega$ and $\Omega_1$, where $\Omega_1 = ETi(\Omega)$ for $i \in \{1,2,3\}$ have the
same set of variables $h$ and the bijection $h_i \rightarrow h_i,\ i \in [1,\rho]$ induces an
isomorphism $G_\Omega \rightarrow G_{\Omega_1}$. Moreover, $U$ is a solution of $\Omega$ if and only
if $U$ is a solution of $\Omega_1$.
\end{remark}

\item[(ET4)] {\em (Removal of a lone base (see Fig. \ref{ET4}))}.
Suppose, a base $\lambda$ in $\Omega$ does not {\em intersect} any other base, that is, the items
$h_{\alpha(\lambda)}, \ldots, h_{\beta(\lambda) - 1}$ are contained only inside of the base
$\lambda$.

\begin{figure}[htbp]
\centering{\mbox{\psfig{figure=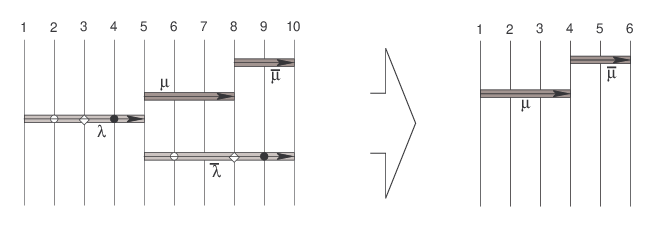}}}
\caption{Elementary transformation (ET4).}
\label{ET4}
\end{figure}

Suppose also that all boundaries in $\lambda$ are $\lambda$-tied, that is, for every $i$
($\alpha(\lambda) < i \leqslant \beta(\lambda) - 1$) there exists a boundary $b(i)$ such that $(i,
\lambda, b(i))$ is a boundary connection in $\Omega$. Then we remove the pair of bases $\lambda$ and
$\overline\lambda$ together with all the boundaries $\alpha(\lambda) + 1, \ldots, \beta(\lambda) - 1$
(and rename the rest $\beta(\lambda) - \alpha(\lambda) - 1$ of the boundaries correspondingly).

We define the isomorphism $\pi: G_{\Omega} \rightarrow G_{\Omega_1}$ as follows:
$$\pi(h_j) = h_j\ {\rm if}\ j < \alpha(\lambda)\ {\rm or}\ j \geqslant \beta(\lambda)$$
\[\pi(h_i) = \left\{\begin{array}{ll}
h_{b(i)} \cdots h_{b(i) - 1}, & if\ \varepsilon(\lambda) = \varepsilon(\overline\lambda),\\
h_{b(i)} \cdots h_{b(i-1) - 1},& if\ \varepsilon(\lambda) = -\varepsilon(\overline\lambda)
\end{array}
\right.
\]
for $\alpha + 1 \leqslant i \leqslant \beta(\lambda) - 1$.

\begin{figure}[htbp]
\centering{\mbox{\psfig{figure=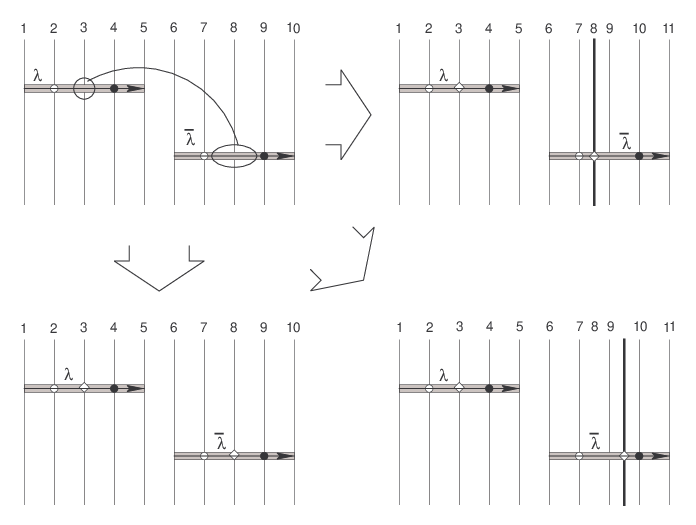}}}
\caption{Elementary transformation (ET5).}
\label{ET5}
\end{figure}

\item[(ET5)] {\em (Introduction of a boundary (see Fig. \ref{ET5}))}. Suppose a point $p$ in a base
$\lambda$ is not $\lambda$-tied. The transformation (ET5) $\lambda$-ties it. To this end, denote by
$u_\lambda$ the element of $CDR(\Lambda ,Z)$ corresponding to $\lambda$ and let $u'_\lambda$ be the
beginning of this word ending at $p$. Then we perform one of the following two transformations
according to where the end of $u'_\lambda$ on $\overline\lambda$ is situated:

\begin{enumerate}
\item If the end of $u'_\lambda$ on $\overline\lambda$ is situated on the boundary $q$ then we
introduce the boundary connection $(p, \lambda ,q)$. In this case the corresponding isomorphism
$\pi : G_\Omega \rightarrow G_{\Omega_1}$ is induced by the bijection $h_i \rightarrow h_i,\ i \in
[1,\rho]$. (If we began with the group $\tilde G$ with non-regular length function as in Remark
\ref{nonreg} this is the only place where  $\pi : G_\Omega \rightarrow G_{\Omega_1}$ may be a proper
epimorphism, but its restriction on $\tilde G$ is still an isomorphism.)

\item If the end of $u'_\lambda$ on $\overline\lambda$ is situated between $q$ and $q+1$ then we
introduce a new boundary $q'$ between $q$ and $q+1$ (and rename all the boundaries), and also
introduce a new boundary connection $(p, \lambda, q')$. In this case the corresponding isomorphism
$\pi: G_\Omega \rightarrow G_{\Omega_1}$ is induced by the map $\pi(h) = h$, if $h \neq h_q$, and
$\pi(h_q) = h_{q'} h_{q'+1}$.
\end{enumerate}
\end{enumerate}

\subsection{Derived transformations and auxiliary transformations}
\label{se:5.2half}

In this section we define complexity of a generalized equation and describe several useful
``derived'' transformations of generalized equations. Some of them can be realized as finite
sequences of elementary transformations, others result in equivalent generalized equations but
cannot be realized by finite sequences of elementary moves.

A boundary is {\em open} if it is an internal boundary of some base, otherwise it is {\em closed}.
A section $\sigma = [i, \ldots, i + k]$ is said to be {\em closed} if the boundaries $i$ and $i+k$
are closed and all the boundaries between them are open.

Sometimes it will be convenient to subdivide all sections of $\Omega$ into {\em active} (denoted
$A\Sigma_\Omega$) and {\em non-active} sections. For an item $h$ denote by $\gamma(h)$ the number of
bases containing $h$. An item $h$ is called {\em free} is it meets no base, that is, if $\gamma(h) =
0$. Free variables are transported to the very end of the interval behind all items in $\Omega$ and
they become non-active.

\begin{enumerate}
\item[(D1)] {\em (Closing a section)}.
Let $\sigma$ be a section of $\Omega$. The transformation (D1) makes the section $\sigma$ closed.
Namely, (D1) cuts all bases in $\Omega$ through the end-points of $\sigma$.

\item[(D2)] {\em (Transporting a closed section)}.
Let $\sigma$ be a closed section of a generalized equation $\Omega$. We cut $\sigma$ out of the
interval $[1,\rho_\Omega]$ together with all the bases on $\sigma$ and put $\sigma$ at the end of the
interval or between any two consecutive closed sections of $\Omega$. After that we correspondingly
re-enumerate all the items and boundaries of the latter equation to bring it to the proper form.
Clearly, the original equation $\Omega$ and the new one $\Omega'$ have the same solution sets and
their coordinate groups are isomorphic

\item[(D3)] {\em (Moving free variables to the right)}.
Suppose that $\Omega$ contains a free variable $h_q$ in an active section. Here we close the section
$[q, q + 1]$ using (D1), transport it to the very end of the interval behind all items in $\Omega$
using (D2). In the resulting generalized equation $\Omega'$ the transported section becomes a
non-active section.

\item[(D4)] {\em (Deleting a complete base)}.
A base $\mu$ of $\Omega$ is called {\em complete} if there exists a closed section $\sigma$ in
$\Omega$ such that $\sigma = [\alpha(\mu), \beta(\mu)]$.

Suppose $\mu$ is an active  complete base of $\Omega$ and $\sigma$ is a closed section such that
$\sigma = [\alpha(\mu),\beta(\mu)]$. In this case using (ET5), we transfer all bases from $\mu$ to
$\overline\mu$, then using (ET4) we remove the lone base $\mu$ together with the section
$\sigma$.

\item[(D5)] {\em (Finding the kernel)}.
We will give a definition of eliminable base for an equation $\Omega$ that does not have any boundary
connections. An active base $\mu \in A\Sigma_\Omega$ is called {\em eliminable} if at least one of
the following holds

\begin{enumerate}
\item[(a)] $\mu$ contains an item $h_i$ with $\gamma(h_i) = 1$,

\item[(b)] at least one of the boundaries $\alpha(\mu), \beta(\mu)$ is different from $1, \rho +1$
and does not touch any other base (except for $\mu$).
\end{enumerate}

The process of finding the kernel works as follows. We cut the bases of $\Omega$ along all the
boundary connections thus obtaining the equation without boundary connections, then
consequently remove eliminable bases until no eliminable base is left in the equation. The
resulting generalized equation is called the {\em kernel} of $\Omega$ and we denote
it by $Ker(\Omega)$. It is easy to see that $Ker(\Omega)$ does not depend on a particular removal
process. Indeed, if $\Omega$ has two different eliminable bases $\mu_1,\ \mu_2$, and deletion of
$\mu_i$ produces an equation $\Omega_i$ then by induction (on the number of eliminations)
$Ker(\Omega_i)$ is uniquely defined for $i = 1, 2$. Obviously, $\mu_1$ is still eliminable in
$\Omega_2$, as well as $\mu_2$ is eliminable in $\Omega_1$. Now eliminating $\mu_1$ and $\mu_2$ from
$\Omega_2$ and $\Omega_1$ we get one and the same equation $\Omega_0$. By induction, $Ker(\Omega_1)
= Ker(\Omega_0) = Ker(\Omega_2)$ hence the result. We say that an item $h_i$ {\it belongs to
the kernel} ($h_i \in Ker(\Omega)$), if $h_i$ belongs to at least one base in the kernel.
Notice that the kernel can be empty.

Also, for an equation $\Omega$ by $\overline{\Omega}$ we denote the equation which is obtained from
$\Omega$ by deleting all free variables. Obviously,
$$G_\Omega = G_{\overline{\Omega}} \ast F(\overline Y),$$
where $\overline Y$ is the set of free variables in $\Omega$.

Let us consider what happens in this process on the group level.

We start with the case when just one base is eliminated. Let $\mu$ be an eliminable base in $\Omega
= \Omega(h_1, \ldots, h_\rho)$. Denote by $\Omega_1$ the equation resulting from $\Omega$ by
eliminating $\mu$.

\begin{enumerate}
\item[(a)] Suppose $h_i \in \mu$ and $\gamma(h_i) = 1$. Then the variable $h_i$ occurs only once in
$\Omega$ - precisely in the equation $s_\mu = 1$ corresponding to the base $\mu$. Therefore, in the
group $G_\Omega$ the relation $s_\mu = 1$ can be written as $h_i = w$, where $w$ does not contain
$h_i$. Using Tietze transformations we can rewrite the presentation of $G_\Omega$ as $G_{\Omega'}$,
where $\Omega'$ is obtained from $\Omega$ by deleting $s_\mu$ and the item $h_i$. It follows
immediately that
$$G_{\Omega_1} \simeq G_{\Omega'} \ast \langle h_i \rangle$$
and
\begin{equation}
\label{eq:ker1}
G_\Omega \simeq G_{\Omega'} \simeq G_{\Omega_1} \ast F(B)
\end{equation}
for some free group $F(B)$.

\item[(b)] Suppose now that $\mu$ satisfies  case (b) above with respect to a boundary $i$. Then in
the equation $s_\mu = 1$ the variable $h_{i-1}$ either occurs only once, or it occurs precisely twice
and in this event the second occurrence of $h_{i-1}$ (in $\overline\mu$) is a part of the subword
$(h_{i-1} h_i)^{\pm 1}$. In both cases it is easy to see that the tuple
$$(h_1, \ldots, h_{i-2}, s_\mu, h_{i-1} h_i, h_{i+1}, \ldots, h_\rho)$$
generates $G$. Therefore, by eliminating the relation $s_\mu = 1$ we can rewrite the presentation of
$G_\Omega$ in the generators $\overline Y = (h_1, \ldots, h_{i-2}, h_{i-1}h_i, h_{i+1}, \ldots,
h_\rho)$. Observe also that any other equation $s_\lambda = 1$ ($\lambda \neq \mu$) of $\Omega$ either
does not contain the variables $h_{i-1}, h_i$, or it contains them as parts of the subword
$(h_{i-1}h_i)^{\pm 1}$, that is, any such a word $s_\lambda$ can be expressed as a word
$w_\lambda(\overline Y)$ in terms of the generators $\overline Y$. This shows that
$$G_\Omega \simeq \langle \overline Y \mid \{w_\lambda(\overline Y) \mid \lambda \neq \mu\} \rangle
\simeq G_{\Omega'},$$
where $\Omega'$ is a generalized equation obtained from $\Omega_1$ by deleting the boundary $i$.
Denote by $\Omega''$ an equation obtained from $\Omega'$ by adding a free variable $z$ to the right
end of $\Omega'$. It follows now that
$$G_{\Omega_1} \simeq  G_{\Omega''} \simeq G_\Omega \ast \langle z \rangle$$
and
\begin{equation}
\label{eq:ker2}
G_\Omega \simeq G_{\overline{\Omega'}} \ast F(K)
\end{equation}
for some free group $F(K)$.
\end{enumerate}

By induction on the number of steps in the process we obtain the following lemma.

\begin{lemma}
\label{7-10}
If $\Omega $ is a generalized equation, then
$$F_\Omega \simeq F_{\overline{Ker(\Omega)}} \ast F(K)$$
where $F(K)$ is a free group on $K$.
\end{lemma}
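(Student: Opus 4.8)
I read $F_\Omega$ as the group $G_\Omega = \langle h \mid \Omega(h)\rangle$ introduced in (D5), so that the statement asserts $G_\Omega \simeq G_{\overline{Ker(\Omega)}} \ast F(K)$ for a free group $F(K)$. The plan is to prove this by induction on the number of eliminable bases removed in passing from $\Omega$ to $Ker(\Omega)$, using the single-base computations carried out just above the lemma as the inductive step. First I would dispose of the boundary connections: the recipe for $Ker(\Omega)$ begins by cutting every base along all its boundary connections, and by the remark following (ET3) each such cut (ET1) induces an isomorphism $G_\Omega \simeq G_{\Omega'}$. Hence after finitely many cuts we may assume, without changing the isomorphism type of $G_\Omega$, that $\Omega$ carries no boundary connections, so that ``eliminable base'' is defined and the entire reduction takes place among groups isomorphic to $G_\Omega$.

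\textbf{Inductive step and conclusion.} The inductive step is exactly the one-base analysis preceding the lemma. If $\mu$ is an eliminable base and $\Omega_1$ results from deleting it, then case (a) gives $G_\Omega \simeq G_{\Omega_1} \ast F(B)$ by (\ref{eq:ker1}), while case (b) gives, after deleting the redundant boundary $i$ and recording the auxiliary free variable $z$, the splitting $G_\Omega \simeq G_{\overline{\Omega'}} \ast F(K)$ by (\ref{eq:ker2}). In either case $G_\Omega \simeq G_{\Omega''} \ast F_\mu$ where $\Omega''$ (namely $\Omega_1$ or $\overline{\Omega'}$) is one elimination closer to the kernel and $F_\mu$ is free. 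Since $\Omega''$ has one fewer eliminable base and, by the order-independence of $Ker(\Omega)$ established in (D5), satisfies $Ker(\Omega'') = Ker(\Omega)$, the induction hypothesis yields $G_{\Omega''} \simeq G_{\overline{Ker(\Omega)}} \ast F(K')$. Combining and absorbing $F_\mu$ into the free factor gives $G_\Omega \simeq G_{\overline{Ker(\Omega)}} \ast F(K)$. The base case, when no eliminable base remains so that $\Omega = Ker(\Omega)$, reduces to the free-variable splitting $G_\Omega = G_{\overline\Omega} \ast F(\overline Y)$ already recorded in (D5), with $F(K) = F(\overline Y)$.

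\textbf{Main obstacle.} The real care lies in the bookkeeping of free factors and free variables, i.e.\ in verifying that every free generator split off in case (a) (the lone item $h_i$) and in case (b) (the auxiliary variable $z$, together with the free variables $\overline Y$ already present in each intermediate equation) is correctly collected into a single free group $F(K)$, and that the boundary deletion and re-enumeration of items in case (b) do not alter the isomorphism type of the intermediate groups. Order-independence of $Ker(\Omega)$, proved in (D5), guarantees that the induction is well-posed no matter which eliminable base is chosen at each step, so the telescoped isomorphism is internally consistent. This is where I expect the only genuine subtlety to be; the group-theoretic content of each individual elimination is already the routine Tietze computation supplied before the lemma.
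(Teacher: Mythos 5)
Your proposal is correct and takes essentially the same route as the paper: induction along a linear elimination sequence $\Omega = \Omega_0 \rightarrow \cdots \rightarrow \Omega_l = Ker(\Omega)$, with the single-base splittings (\ref{eq:ker1}) and (\ref{eq:ker2}) as the inductive step and the free-variable splitting $G_\Omega = G_{\overline{\Omega}} \ast F(\overline Y)$ as the base case. The only cosmetic difference is that where you invoke order-independence to assert $Ker(\Omega'') = Ker(\Omega)$, the paper records the slightly more careful identities $\overline{Ker(\Omega_j)} = \overline{Ker(\overline{\Omega_j})}$ and $\overline{Ker(\Omega_j)_1} = \overline{Ker(\Omega_j')}$ --- the precise form needed in case (b), where the boundary deletion merges items so the kernels agree only after deleting free variables, a subtlety you correctly flagged as the point requiring bookkeeping.
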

\begin{proof} Let
$$\Omega = \Omega_0 \rightarrow \Omega_1 \rightarrow \cdots \rightarrow \Omega_l = Ker(\Omega)$$
be a linear elimination process. It is easy to see (by induction on $l$)
that for every $j = 0, \ldots, l-1$
$$\overline{Ker(\Omega_j)} = \overline{Ker(\overline{\Omega_j})}.$$
Moreover, if $\Omega_{j+1}$ is obtained from $\Omega_j$ as in the case (b) above, then (in the
notations above)
$$\overline{Ker(\Omega_j)_1} = \overline{Ker(\Omega_j')}.$$
Now the statement of the lemma follows from the remarks above and equalities (\ref{eq:ker1}) and
(\ref{eq:ker2}).
\end{proof}

\item[(D6)] {\em (Linear elimination)}.
Suppose that in $\Omega$ there is $h_i$ in an active section with $\gamma(h_i) = 1$ and such that
$|h_i|$ is comparable with the length of the active section. In this case we say that $\Omega$ is
{\em linear in $h_i$}.

If $\Omega$ is linear in $h_i$ in an active section such that both boundaries $i$ and $i+1$ are
closed then we remove the closed section $[i,i+1]$ together with the lone base using (ET4).

If there is no such $h_i$ but $\Omega$ is linear in some $h_i$ in an active section such that one
of the boundaries $i,i+1$ is open, say $i+1$, and the other is closed, then we perform (ET5) and
$\mu$-tie $i+1$ through the only base $\mu$ it intersects. Next, using (ET1) we cut $\mu$ in $i+1$
and then we delete the closed section $[i,i+1]$ by (ET4).

Suppose there is no $h_i$ as above but $\Omega$ is linear in some $h_i$ in an active section such
that both boundaries $i$ and $i+1$ are open. In addition, assume that there is a closed section
$\sigma$ containing exactly two (not matched) bases $\mu_1$ and $\mu_2$, such that $\sigma =
\sigma(\mu_1) = \sigma(\mu_2)$ and in the generalized equation $\widetilde{\Omega}$ (see the derived
transformation (D3)) all the bases obtained from $\mu_1,\mu_2$ by (ET1) in constructing
$\widetilde{\Omega}$ from $\Omega$, do not belong to the kernel of $\widetilde{\Omega}$. Here, using
(ET5), we $\mu_1$-tie all the boundaries inside of $\mu_1$, then using (ET2) we transfer $\mu_2$ onto
$\overline\mu_1$, and remove $\mu_1$ together with the closed section $\sigma$ using (ET4).

Suppose now that $\Omega$  satisfies the first assumption of the previous paragraph and does not
satisfy the second one. In this event we close the section $[i, i+1]$ using (D1) and remove it
using (ET4).

\begin{lemma}\label{lin-el}
Suppose that the process of linear elimination continues infinitely and there is a corresponding
sequence of generalized equations
$$\Omega \rightarrow \Omega_1 \rightarrow \cdots \rightarrow \Omega_k \rightarrow \cdots.$$
Then
\begin{enumerate}
\item (\cite{KMIrc}, Lemma 15) The number of different generalized equations that appear in the
process is finite. Therefore some generalized equation appears in this process infinitely many
times.

\item (\cite{KMIrc}, Lemma 15) If $\Omega_j = \Omega_k,\ j < k$ then $\pi(j,k)$ is an isomorphism,
invariant with respect to the kernel, namely $\pi(j,k)(h_i) = h_i$ for any variable $h_i$ that
belongs to some base in $Ker(\Omega)$.

\item The interval for the equation $\Omega_j$ can be divided into two disjoint parts, each being
the union of closed sections, such that one part is a generalized equation $Ker(\Omega)$ and the
other part is non-empty and corresponds to a generalized equation $\Omega'$, such that $G_{\Omega'}
= F(K)$ is a free group on variables $K$ and $G_\Omega = G_{Ker(\Omega)} \ast F(K)$.
\end{enumerate}
\end{lemma}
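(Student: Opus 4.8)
The plan is to dispatch parts (1) and (2) by appeal to \cite{KMIrc} and to concentrate on part (3), whose content is to exhibit the non-kernel region as a sub-equation with empty kernel, hence a free group.

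For part (1), I would use the complexity bookkeeping for linear elimination from \cite{KMIrc}: each case of (D6) is realized by a composition of the elementary moves (ET1), (ET2), (ET4), (ET5) under which a fixed complexity measure of the generalized equation does not increase, while the number of bases, boundaries and boundary connections stays uniformly bounded. Consequently only finitely many combinatorial types can occur among the $\Omega_k$, and since the process is assumed infinite, some equation recurs infinitely often. For part (2), again following \cite{KMIrc}, Lemma 15, along a loop $\Omega_j = \Omega_k$ the composite $\pi(j,k)$ is a composition of the isomorphisms attached to the individual moves (recorded in the remark after (ET3) and in the definitions of (ET4), (ET5) in the regular setting), hence an isomorphism; moreover $\pi(j,k)(h_i) = h_i$ for $h_i \in Ker(\Omega)$ because, by the defining conditions of (D6), every base acted on by linear elimination is eliminable and therefore lies outside the kernel, so kernel items are never moved.

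For part (3), fix an equation $\Omega_j$ that recurs infinitely often (part (1)). By part (2) the kernel bases, together with the items they cover, persist unchanged through the whole process, so I may first apply (D1) to close the sections at the endpoints of all kernel bases and then apply (D2) to transport all kernel sections to one end of the interval. This collects $Ker(\Omega)$ into a union of closed sections; write $\Omega'$ for the complementary union of closed sections. The complement is non-empty, since an infinite process must keep eliminating bases and all such bases lie outside the kernel, i.e. in $\Omega'$. The crux is then to show $G_{\Omega'}$ is free. I would verify that $Ker(\Omega') = \emptyset$: by the definition of the kernel in (D5) every base of $\Omega'$ is eliminable in $\Omega$, and I must check that eliminability survives the restriction, so that running the kernel-finding procedure inside $\Omega'$ deletes every base and returns the empty equation. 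Applying Lemma \ref{7-10} to $\Omega'$ then gives $G_{\Omega'} \simeq G_{\overline{Ker(\Omega')}} \ast F(K) = F(K)$. Combining the interval splitting with Lemma \ref{7-10} for $\Omega$ and the kernel-invariance of part (2) finally yields $G_\Omega \simeq G_{Ker(\Omega)} \ast F(K)$, the free variables of the kernel being absorbed into $F(K)$.

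The main obstacle I anticipate is controlling the interaction between kernel and non-kernel bases along the interval: a base removed by linear elimination may a priori sit over items shared with a kernel base, so neither the clean separation into disjoint unions of closed sections nor the vanishing $Ker(\Omega') = \emptyset$ is automatic. Establishing both requires combining the recurrence of $\Omega_j$ from part (1) with the kernel-invariance from part (2) to argue that, across one period of the process, the complementary bases are eliminated completely and cannot feed back into the kernel; this is the step where the $\gamma$-counts and boundary connections witnessing eliminability in $\Omega$ must be shown to remain available after the kernel sections are transported away.
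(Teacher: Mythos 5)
Your overall strategy for part (3) coincides with the paper's: separate the interval into the kernel part and a complementary union of closed sections $\Omega'$, show that the complement carries an empty kernel, and invoke Lemma \ref{7-10} to conclude $G_{\Omega'} = F(K)$ and $G_\Omega \simeq G_{Ker(\Omega)} \ast F(K)$; your dispatch of parts (1) and (2) to \cite{KMIrc} also matches the paper, which likewise proves only the third statement. The problem is that your proposal stops exactly where the proof has to happen. You yourself flag as ``the main obstacle'' that a base cut or removed by (D6) may a priori sit over items shared with a kernel base, so that neither the clean disjoint decomposition nor $Ker(\Omega') = \emptyset$ is automatic, and you say that resolving this requires combining the recurrence of $\Omega_j$ with the kernel-invariance of part (2) --- but you never carry that combination out. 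That unresolved claim \emph{is} the content of part (3); as written, the proposal is a plan together with an acknowledged gap, not an argument.

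The paper closes this gap with a short counting observation that your sketch is missing. Linear elimination never removes or cuts the variables $h_i$ belonging to $Ker(\Omega)$: the items deleted in (D6) have $\gamma(h_i) = 1$ and are covered only by an eliminable base, hence are non-kernel. Since kernel variables persist untouched, each time the process removes a part of a base covering some kernel variables, that base loses material over the fixed kernel region; this can therefore happen only finitely many times in total. Consequently, in an infinite process all bases cut from some point on lie entirely in the part of the interval disjoint from $Ker(\Omega)$, which simultaneously gives the disjoint decomposition into unions of closed sections and the emptiness of $Ker(\Omega')$, after which Lemma \ref{7-10} finishes as you indicate. Your alternative route via one period of a recurring equation could perhaps be made to work, but you would still have to exclude a base shedding a piece over the kernel region \emph{within} a period (the isomorphism $\pi(j,k)$ of part (2) fixes kernel items, not the positions of non-kernel bases); the paper's finiteness argument sidesteps this entirely by bounding, once and for all, how often any cutting can touch the kernel region. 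Note also that the paper needs no appeal to (D1) and (D2) to transport kernel sections: the separation falls out of the counting argument directly.
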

\begin{proof} We will only prove the third statement. In the process of linear elimination we do not
remove or cut variables $h_i$ that belong to the kernel. Since we do not cut variables that belong to
the kernel, we can remove parts of bases that cover some of these variables only finitely many times.
Therefore, bases that are cut in the process belong to the part of the interval that is disjoint from
the $Ker(\Omega)$. Together with Lemma \ref{7-10} this proves the third statement.
\end{proof}

\item[(D7)] {\em (Tietze cleaning)}.
This transformation consists of four transformations performed consecutively
\begin{enumerate}
\item[(a)] linear elimination: if the process of linear elimination goes infinitely we replace the
equation by its kernel,
\item[(b)] deleting all pairs of matched bases,
\item[(c)] deleting all complete bases,
\item[(d)] moving all free variables to the right.
\end{enumerate}

\item[(D8)] {\em (Entire transformation)}.
We need a few definitions. A base $\mu$ of the equation $\Omega$ is called a {\em leading} base if
$\alpha(\mu) = 1$. A leading base is said to be {\it maximal} (or a {\em carrier base}) if
$\beta(\lambda) \leqslant \beta(\mu)$ for any other leading base $\lambda$. Let $\mu$ be a carrier base
of $\Omega$. Any active base $\lambda \neq \mu$ with $\beta(\lambda) \leqslant \beta(\mu)$ is called a
{\em transfer} base (with respect to $\mu$).

Suppose now that $\Omega$ is a generalized equation with $\gamma(h_i) \geqslant 2$ for each $h_i$ in
the active part of $\Omega$ and such that $|h_i|$ is comparable with the length of the active part.
{\em Entire transformation} is a sequence of elementary transformations which are performed as
follows. We fix a carrier base $\mu$ of $\Omega$. We transfer all transfer bases from $\mu$ onto
$\overline \mu$. Now, there exists some $i < \beta(\mu)$ such that $h_1, \ldots, h_i$ belong to only
one base $\mu$, while $h_{i+1}$ belongs to at least two bases. Applying (ET1) we cut $\mu$ along the
boundary $i+1$. Finally, applying (ET4) we delete the section $[1,i+1]$.
\end{enumerate}

\subsection{Complexity of a generalized equation and Delzant-Potyagailo complexity $c(G)$ of a group $G$}
\label{subs:complexity}

Denote by $\rho_A$ the number of variables $h_i$ in all active sections of $\Omega$, by $n_A =
n_A(\Omega)$ the number of bases in all active sections of $\Omega$, by $\nu'$ the number of open
boundaries in the active sections, and by $\sigma'$ the number of closed boundaries in the active
sections.

For a closed section $\sigma \in \Sigma_\Omega$ denote by $n(\sigma), \rho(\sigma)$ the number of
bases and, respectively, variables in $\sigma$.
$$\rho _A = \rho_A(\Omega) = \sum_{\sigma \in A\Sigma_\Omega} \rho(\sigma),$$
$$n_A = n_A(\Omega) = \sum_{\sigma \in A\Sigma_\Omega} n(\sigma).$$

The {\em complexity} of the  generalized  equation $\Omega $ is the number
$$\tau = \tau (\Omega) = \sum_{\sigma \in A\Sigma_\Omega} \max\{0, n(\sigma) - 2\}.$$

Notice that the entire transformation (D5) as well as the cleaning process (D4) do not increase
complexity of equations.

Below we recall Delzant-Potyagailo's result (see \cite{DP}). A family ${\cal C}$ of subgroups of a
torsion-free group $G$ is called {\em elementary} if
\begin{enumerate}
\item[(a)] ${\cal C}$ is closed under taking subgroups and conjugation,
\item[(b)] every $C \in {\cal C}$ is contained in a maximal subgroup $\overline{C} \in {\cal C}$,
\item[(c)] every $C \in {\cal C}$ is small (does not contain $F_2$ as a subgroup),
\item[(d)] all maximal subgroups from ${\cal C}$ are malnormal.
\end{enumerate}
$G$ admits a {\em hierarchy} over ${\cal C}$ if the process of decomposing $G$ into an amalgamated
product or an HNN-extension over a subgroup from ${\cal C}$, then decomposing factors of $G$ into
amalgamated products and/or HNN-extensions over a subgroup from ${\cal C}$ etc. eventually stops.

\begin{proposition} (\cite{DP})
If $G$ is a finitely presented group without $2$-torsion and ${\cal C}$ is a family of elementary
subgroups of $G$ then $G$ admits a hierarchy over ${\cal C}$.
\end{proposition}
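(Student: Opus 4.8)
The plan is to exhibit a well-founded complexity invariant $c(G)$ on the class of finitely presented $2$-torsion-free groups that strictly drops under every admissible splitting, so that the decomposition process defining the hierarchy cannot continue indefinitely. Following Delzant, I would attach to $G$ a complexity built from a minimal triangulated (or efficient presentation) $2$-complex $K$ with $\pi_1(K) \cong G$, measured by a lexicographically ordered datum such as the number of $2$-cells together with a secondary count on the $1$-skeleton; the essential structural point is that $c(G)$ takes values in a well-ordered set, so that strict descent forces termination.

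First I would record the geometric interpretation of a splitting of $G$ over $C \in {\cal C}$ as an essential two-sided track $S$ in $K$ whose edge group maps to a conjugate of $C$. Cutting $K$ along $S$ produces complexes for the vertex groups $A, B$ in the amalgamated case $G = A *_C B$, or a single vertex complex together with a stable letter in the HNN case. The heart of the argument is then to show, for a suitably \emph{taut} and minimal choice of $S$, that $c(A), c(B) < c(G)$ (respectively that the vertex complexity drops in the HNN case). Once this strict-monotonicity lemma is in hand, the conclusion is immediate: a hierarchy is a tree of splittings along which $c$ strictly decreases on each downward edge, and since $c$ is valued in a well-ordered set, every branch is finite, so the process stops.

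The role of the hypotheses (a)--(d) is precisely to make this descent work. Closure under subgroups and conjugation, together with the existence of maximal members $\overline{C} \in {\cal C}$, guarantees that the edge groups of refined sub-splittings remain inside ${\cal C}$, so the same invariant governs the whole hierarchy. Smallness (no $F_2 \leqslant C$) ensures that the edge groups carry no complexity that could be reinjected into the vertex groups and reverse the drop, allowing $S$ to be normalized to a thin essential representative. Most importantly, the malnormality of the maximal subgroup $\overline{C}$ forces distinct $G$-translates of $S$ to be disjoint (or to meet only trivially); this is the mechanism that rules out the accumulation of mutually parallel cutting surfaces responsible for Dunwoody's inaccessible examples over general small subgroups, and it bounds the number of essential parallel copies of $S$ by a quantity depending only on $c(G)$.

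The main obstacle, as in all accessibility theorems of Dunwoody and Bestvina--Feighn type, is exactly this strict-monotonicity lemma, and within it the HNN case, where one must handle the stable letter without reintroducing complexity. I expect the delicate work to be the cut-and-count estimate carried out relative to the malnormal edge group: one must verify that the minimal taut track $S$ can be chosen so that cutting genuinely removes simplices rather than merely redistributing them, using property (d) to control the intersection pattern of $S$ with its translates and property (c) to keep the boundary identifications from inflating the vertex complexes. All remaining steps---termination by well-foundedness and propagation of the family ${\cal C}$ down the hierarchy---are then formal consequences of the invariant and the closure conditions.
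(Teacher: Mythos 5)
First, note what you are being compared against: the paper does not prove this proposition at all --- it is imported verbatim from Delzant--Potyagailo \cite{DP}, so the only meaningful benchmark is the argument of that paper. Your outline does correctly reproduce the \emph{shape} of the DP argument (a well-founded complexity on finitely presented $2$-torsion-free groups, realized geometrically via a triangulated $2$-complex $K$ with $\pi_1(K)\cong G$, tracks representing splittings, strict descent of the invariant, and malnormality of the maximal subgroups $\overline{C}$ as the mechanism that blocks Dunwoody-type inaccessibility). But what you have written is a plan, not a proof: the one statement that carries the entire content --- the strict monotonicity lemma $c(A), c(B) < c(G)$ for a taut minimal track with elementary malnormal edge group, including the HNN case --- is explicitly deferred (``I expect the delicate work to be \dots''). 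Since, as you yourself observe, termination and propagation of ${\cal C}$ down the hierarchy are formal consequences of that lemma, deferring it means nothing nontrivial has been established.

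Beyond the deferral, there are concrete points where the plan as stated would break. (i) The proposed invariant (``number of $2$-cells together with a secondary count on the $1$-skeleton'') does not obviously drop: cutting a complex along a track and recapping typically \emph{increases} the number of cells, and Delzant--Potyagailo need a carefully normalized complexity (Delzant's count of triangles of a simplicial $2$-complex, minimized over suitable complexes) together with the smallness hypothesis (c) precisely so that the pieces carrying the edge groups contribute nothing to the count after retriangulation; your remark that smallness ``ensures that the edge groups carry no complexity'' is the conclusion of that analysis, not an input. (ii) Hierarchical accessibility iterates on vertex groups, so you must show the vertex groups $A, B$ are again \emph{finitely presented} (otherwise $c(A)$ is undefined) and that the induced family of subgroups is again elementary in each vertex group; a splitting of a finitely presented group over a finitely generated subgroup does not in general have finitely presented vertex groups, and your closure remark does not address this. (iii) You never identify where the absence of $2$-torsion is used, although it is a genuine hypothesis in \cite{DP} and the proposition is false without some such restriction on the normalization of the complexes. (iv) The claim that malnormality of $\overline{C}$ forces distinct translates of the track $S$ to be disjoint is exactly the delicate geometric step, and you assert it rather than derive it. So the proposal is a reasonable reconstruction of the strategy of \cite{DP}, but it contains a genuine gap at its designated core, plus unaddressed issues (i)--(iv) that the actual DP proof must and does resolve.
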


\begin{cor}
If $G$ is a finitely presented $\Lambda$-free group then $G$ admits a hierarchy over the family of
all abelian subgroups.
\end{cor}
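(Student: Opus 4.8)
The plan is to apply the Delzant--Potyagailo Proposition stated just above, taking ${\cal C}$ to be the family of \emph{all} abelian subgroups of $G$; the whole content of the corollary is then the verification that $G$ satisfies the hypotheses of the proposition and that this ${\cal C}$ is an elementary family. Finite presentability of $G$ is assumed. Since $G$ is $\Lambda$-free it acts freely on a $\Lambda$-tree, and any group admitting a free isometric action on a $\Lambda$-tree is torsion-free; in particular $G$ has no $2$-torsion, so the standing hypothesis of the proposition is met. Thus it remains to check axioms (a)--(d) for the family of abelian subgroups.

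Axioms (a) and (c) are immediate: a subgroup of an abelian group is abelian and a conjugate of an abelian group is abelian, while an abelian group cannot contain the free group $F_2$. The two substantive points are (b) and (d), and for these I would invoke the standard structural properties of groups acting freely on $\Lambda$-trees: such groups are \emph{commutation transitive} (equivalently, the centralizer $C_G(c)$ of every nontrivial $c$ is abelian) and are \emph{CSA} (every maximal abelian subgroup is malnormal); these facts are not proved in the present excerpt and would be cited from the $\Lambda$-tree literature, e.g.\ \cite{Ch1}. Granting commutation transitivity, axiom (b) follows directly: given $C \in {\cal C}$, if $C = 1$ it lies in any maximal abelian subgroup, and if $C \neq 1$ choose $1 \neq c \in C$; then $\overline{C} := C_G(c)$ is abelian, contains $C$ (every element of $C$ commutes with $c$), and is maximal abelian, since any abelian overgroup of $C_G(c)$ consists of elements commuting with $c$ and hence sits inside $C_G(c)$. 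This exhibits each abelian subgroup inside a (unique) maximal abelian subgroup.

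Axiom (d) is then \emph{exactly} the CSA property: maximal subgroups of ${\cal C}$ are precisely the maximal abelian subgroups of $G$, and malnormality of these is the defining CSA condition, so (d) holds by the cited input. With (a)--(d) in hand the family of abelian subgroups is elementary, and the Delzant--Potyagailo Proposition yields a hierarchy of $G$ over this family, which is the assertion of the corollary. The main obstacle is therefore not the hierarchy machinery but securing the two algebraic inputs on $\Lambda$-free groups, namely that centralizers of nontrivial elements are abelian and that maximal abelian subgroups are malnormal; once these are granted the verification of the four axioms is routine. A minor point to handle carefully is the trivial subgroup and the precise meaning of ``maximal'' in axiom (b), to ensure the family satisfies the statement as literally written.
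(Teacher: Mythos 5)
Your proposal is correct and takes essentially the same route as the paper: the corollary is intended as an immediate application of the Delzant--Potyagailo proposition to the family of all abelian subgroups, with axioms (a)--(d) supplied by torsion-freeness, commutation transitivity, and the CSA property of $\Lambda$-free groups --- standard facts the paper itself invokes elsewhere (e.g.\ the CSA property in the proof of Lemma \ref{cycles}). The verification you spell out is exactly the content the paper leaves implicit.
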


There is a notion of complexity of a group $G$ defined in \cite{DP} and denoted by $c(G)$. We will
only use the following statement that follows from there.

\begin{proposition} (\cite{DP})
\label{DP}
If $G$ is a finitely presented $\Lambda$-free group. Let $\Gamma$ be an abelian decomposition of $G$
as a fundamental group of a graph of groups with more than one vertex and with each edge group being
maximal abelian at least in one of its vertex groups. Then for each vertex group $G_v$, $c(G_v) <
c(G)$.
\end{proposition}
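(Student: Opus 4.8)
The plan is to obtain this as a direct application of the complexity theory of Delzant and Potyagailo \cite{DP}; the only work is to confirm that the hypotheses of their complexity-monotonicity apply to the family $\mathcal{C}$ of abelian subgroups of $G$ and to the decomposition $\Gamma$.

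First I would record that $\mathcal{C}$ is an \emph{elementary} family, which is exactly what was already needed to deduce the preceding Corollary from \cite{DP}. Since $G$ is $\Lambda$-free it is torsion-free, so it has no $2$-torsion, the standing hypothesis in \cite{DP}. Properties (a) and (c) of an elementary family are immediate for abelian subgroups: the class is closed under subgroups and conjugation, and an abelian group contains no $F_2$. Properties (b) and (d) follow from the standard fact (see \cite{Ch1}) that in a $\Lambda$-free group commutation is transitive on non-trivial elements. This makes the centralizer of any non-trivial element abelian, so every abelian subgroup sits inside a unique maximal abelian subgroup, namely the centralizer of any of its non-trivial elements, giving (b); and it forces distinct maximal abelian subgroups to meet trivially and each of them to be self-normalizing, giving malnormality and hence (d).

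Next I would identify $\Gamma$ as an elementary splitting of the required shape. By hypothesis all edge groups of $\Gamma$ are abelian, hence members of $\mathcal{C}$, and the assumption that each edge group is maximal abelian in at least one of its vertex groups says precisely that each edge group is a maximal element of $\mathcal{C}$ in an adjacent vertex group. This is the configuration governed by the Delzant--Potyagailo complexity $c$: their invariant is constructed so that whenever a finitely presented torsion-free group without $2$-torsion is the fundamental group of a graph of groups over $\mathcal{C}$ with more than one vertex, and the edge groups are maximal in $\mathcal{C}$ in at least one adjacent vertex group, every vertex group $G_v$ satisfies $c(G_v) < c(G)$. Applying this to $\Gamma$ yields the proposition.

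The main obstacle is matching our hypotheses against the precise form of the complexity in \cite{DP}: one must check that maximality of an edge group in a \emph{single} adjacent vertex group (not necessarily both) already forces the \emph{strict} drop of $c$ on the vertex groups, and that the ``more than one vertex'' restriction is exactly what excludes the degenerate HNN situation in which $c$ can fail to decrease. Once the dictionary between ``maximal abelian in a vertex group'' and ``maximal elementary subgroup'' is fixed, the strict inequality is just the monotonicity of $c$ established in \cite{DP}, and nothing further is required.
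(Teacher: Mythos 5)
Your proposal is correct and coincides with the paper's treatment: the paper offers no argument for this proposition beyond the citation to \cite{DP}, and your verification that the abelian subgroups of a $\Lambda$-free group form an elementary family (torsion-freeness, smallness, unique maximal abelian subgroups, malnormality) is exactly the hypothesis check implicit in the corollary stated just before it. One small caution: commutation transitivity by itself does not give malnormality --- it yields only that distinct maximal abelian subgroups meet trivially, while self-normalization of a maximal abelian subgroup can fail in torsion-free commutation-transitive groups that are not CSA --- so the fact you should invoke is the standard theorem that tree-free groups are CSA (available in \cite{Ch1}, and used elsewhere in the paper, e.g., in the proof of Lemma \ref{cycles}), after which the appeal to the Delzant--Potyagailo complexity monotonicity goes through as you describe.
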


\subsection{Rewriting process for $\Omega$}
\label{se:5.2}

In this section we describe a rewriting process (elimination process) for a generalized equation
$\Omega$ corresponding to $G$. Performing the elimination process we eventually detect a decomposition
of $G$ as a free product or (if it is freely indecomposable) as the fundamental group of a graph of
groups with vertex groups of three types: QH vertex groups,  abelian vertex groups (corresponding to
periodic structures, see below), non-QH, non-abelian vertex groups (we will call them {\em weakly
rigid} meaning that we do not split them in this particular decomposition). We also can detect
splitting of $G$ as an HNN-extension with stable letter infinitely longer than generators of the
abelian associated subgroups. After obtaining such a decomposition we continue the elimination
process with the generalized equation corresponding to free factors of $G$  or to weakly rigid
subgroups of $G$ (we will show that this generalized equation can be naturally obtained from the
generalized equation $\Omega$. The Delzant-Potyagailo complexity of factors in a free decomposition
and complexity of weakly rigid subgroups is smaller than the complexity of $G$ (denoted $c(G)$)
\cite{DP}. In the case of an HNN extension we will show that the complexity $\tau$ of the generalized
equation corresponding to a weakly rigid subgroup is smaller that the complexity of $\Omega$.

We assume that $\Omega$ is in standard form, namely, that transformations (ET3), (D3) and (D4) have
been applied to $\Omega$ and that on each step we apply them to the generalized equation before
applying any other transformation.

Let $\Omega $ be a generalized equation. We construct a path $T(\Omega)$ (with associated structures),
as a directed path oriented from the root $v_0$, starting at $v_0$ and proceeding by induction on the
distance $n$ from the root.

We start with a general description of the path $T(\Omega)$. For each vertex $v$ in $T(\Omega)$ there
exists a unique generalized equation $\Omega_v$ associated with $v$. The initial equation $\Omega$ is
associated with the root $v_0$, $\Omega_{v_0} = \Omega$. In addition there is a homogeneous system of
linear equations $\Sigma_v$ with integer coefficients on the lengths of  variables of $\Omega _v$.
 We take $\Sigma_{v_0}$ to be empty. For each edge $v \to v'$ (here $v$ and $v'$ are the
origin and the terminus of the edge) there exists an isomorphism $\pi(v,v') : G_{\Omega_v} \to
G_{\Omega_v'}$ associated with $v \to v'$.

If
$$v_0 \rightarrow v_1 \rightarrow \cdots \rightarrow v_s \rightarrow u$$
is a dubpath of  $T(\Omega)$, then by $\pi(v,u)$ we denote composition of corresponding isomorphisms
$$\pi(v,u) = \pi(v,v_1) \circ \cdots \circ \pi(v_s,u).$$

If $v \to v'$ is an edge then there exists a finite sequence of elementary or derived transformations
from $\Omega_v$ to $\Omega_{v'}$ and the isomorphism $\pi(v,v')$ is a composition of the
isomorphisms corresponding to these transformations. We also assume that active (and non-active)
sections in $\Omega_{v'}$ are naturally inherited from $\Omega_v$, if not said otherwise. Recall that
initially all sections are active.

Suppose the path $T(\Omega)$ is constructed by induction up to a level $n$ and suppose $v$ is a
vertex at distance $n$ from the root $v_0$. We describe now how to extend the path from $v$. The
construction of the outgoing edge at $v$ depends on which case described below takes place at the
vertex $v$. There are three possible cases.

\begin{itemize}
\item {\bf Linear case:} there exists $h_i$ in the active part such that $|h_i|$ is comparable with
the length of the active part and $\gamma(h_i) = 1$.

\item {\bf Quadratic and almost quadratic case:} $\gamma(h_i) = 2$ for all $h_i$ in the active part
such that $|h_i|$ is comparable with the length of the active part.

\item {\bf General JSJ case:} $\gamma(h_i) \geqslant 2$ for all $h_i$ in the active part such that
$|h_i|$ is comparable with the length of the active part, and there exists such $h_i$ that $\gamma(h_i)
> 2$.
\end{itemize}

\subsubsection{Linear case}
\label{linear_case}

We apply Tietze cleaning (D7) at the vertex $v_n$ if it is possible. We re-write the system of linear
equations $\Sigma_{v_n}$  in new variables and obtain a new system $\Sigma_{v_{n+1}}$.

If $\Omega_{v_{n+1}}$ splits into two parts, $\Omega_{v_{n+1}}^{(1)} = Ker(\Omega_{v_n})$ and
$\Omega_{v_{n+1}}^{(2)}$ that corresponds to a free group $F(K)$, then we put the free group section
$\Omega_{v_{n+1}}^{(2)}$ into a non-active part and thus decrease the complexity. It may happen that
the kernel is empty, then the process terminates.

If it is impossible to apply Tietze cleaning (that is $\gamma(h_i) \geqslant 2$ for any $h_i$ in the
active part of $\Omega_v$ comparable to the length of the active part), we apply the entire
transformation.

{\bf Termination condition:} $\Omega_v$ does not contain active sections. In this case the vertex
$v$ is called a {\it leaf} or an {\it end vertex}.

\subsubsection{Quadratic case}
\label{quad_case}

Suppose $\Omega_v$ satisfies the condition $\gamma_i \leqslant 2$ for each $h_i$ in the active part
and $\gamma_i = 2$ for each $h_i$ in the active part comparable with the length of the active part.
First of all, we fill in all the $h_i's$ in the active part such that $\gamma_i = 1$ by new (infinitely
short) bases $\mu$ such that $\overline\mu$ covers a new variable that we add to the non-active part.

We apply the entire transformation (D8), then apply Tietze cleaning (D7), if possible, then again
apply entire transformation, etc. In this process we, maybe, will remove some pairs of matching bases
decreasing the complexity. Eventually we either end up with empty active part or the process will
continue infinitely, and the number of bases in the active part will be constant. Let $\alpha _0 +1$
be the leftmost  boundary that belongs to a base not participating in the process.

Let us analyze the structure of $G_{\Omega_v}$ in this case. Let $\sigma$ be a section in the active
part $\sigma = [h_1, \ldots, h_{\alpha _0}]$. If $\sigma$ is not closed, then cutting the second base
at $\alpha _0$ we make $\sigma$ a closed section. Let $F_1$ be a free group with basis $\{h_1, \ldots,
h_{\alpha _0}\}$. If $\sigma$ contains an open boundary $j$ then we can consider a new generalized
equation $\Omega_v'$ obtained from $\Omega_v$ by replacing the product $h_{j-1} h_j$ by a new variable
$h_{j-1}'$ and represent $G_{\Omega_v} = G_{\Omega_v'} \ast \langle h_j \rangle$. Therefore we can
suppose that $\sigma$ does not contain open boundaries. We say that $\mu$ is a {\em quadratic} base
if $\sigma$ contains $\mu$ and $\overline\mu$, otherwise $\mu$ is a {\em quadratic-coefficient base}.
Denote the set of quadratic-coefficient bases by $C$. Suppose that $\sigma$ contains quadratic bases.
Let $F_1 / \sigma$ be the quotient of $F_1$ over the normal closure of elements $h[\alpha(\mu),
\beta(\mu)] h[\alpha(\overline\mu),\beta(\overline\mu)]^{-1}$. Let ${\mathcal M}$ be a set that
contains exactly one representative of each pair of double bases on $\sigma$ and contains also each
base $\mu$ such that $\mu \in \sigma$ and $\bar\mu \notin \sigma$. If we identify each base on
$\sigma$ with its double then the product $h_1 \cdots h_i$ can be written as a product of bases from
${\mathcal M}$ in exactly two different ways: $\mu_{i_1} \cdots \mu_{i_k}$ and $\mu_{j_1} \cdots
\mu_{j_t}$. Then $F_1 / \sigma$ is isomorphic to the quotient of the free group $F(\mathcal M)$ over
the relation $\mu_{i_1} \cdots \mu_{i_k} = \mu_{j_1} \cdots \mu_{j_t}$. Every element of $\mathcal
M$ occurs in this relation at most twice. Applying an automorphism of $F(\mathcal M)$ identical on
$C$, we can obtain another basis $X \cup T \cup C$ of this group such that in this basis the relation
has form of a standard quadratic equation in variables from $X$ with coefficients in $F(C)$ (see
\cite{LS}, Section 1.7). Note that the variables from $T$ do not participate in any relations. The
quadratic equation corresponding to $\sigma$ can be written in the standard form with coefficients
expressed in terms of non-active variables.

Here we have to give some definitions.

\begin{definition}
A standard quadratic equation over the group $G$ is an equation of the one of the following forms
(below $d,c_i$ are nontrivial elements from $G$):
\begin{equation}
\label{eq:st2}
\prod_{i=1}^n [x_i,y_i] \prod_{i=1}^m z_i^{-1} c_i z_i d = 1,\ \ \ n, m \geqslant 0, m + n \geqslant 1;
\end{equation}
\begin{equation}
\label{eq:st4}
\prod_{i=1}^n x_i^2 \prod_{i=1}^m z_i^{-1} c_i z_i d = 1, \ \ \ n, m \geqslant 0, n + m \geqslant 1,
\end{equation}
where $c_1, \ldots, c_m, d$ are coefficients.
\end{definition}

Let $W$ be a strictly quadratic word over a group $G$. Then there is a $G$-automorphism $f \in
Aut_G(G[X])$ such that  $W^f$ is a standard quadratic word over $G.$

To each quadratic equation one can associate a punctured surface. For example, the orientable surface
associated to equation \ref{eq:st2} will have genus $n$ and $m + 1$ punctures.

Put
$$\kappa(S) = |X| + 1,$$

\begin{definition}
\label{regular}
Let $S = 1$ be a standard quadratic equation over a group $G$ which has a solution in $G$. The equation
$S(X) = 1$ is {\em regular} if $\kappa(S) \geqslant 4$ (equivalently, the Euler characteristic of the
corresponding punctured surface is at most $-2$) and there is a non-commutative solution of $S(X) = 1$
in $G$, or it is an equation of the type $[x,y] d = 1$, or $[x_1,y_1][x_2,y_2] = 1$.
\end{definition}

If the quadratic equation corresponding to $\sigma$ is regular, then $G_{\Omega_v}$ has a presentation
as a fundamental group of a graph of groups with QH-subgroup corresponding to this equation. (For
example, the QH-subgroup corresponding to the standard equation (\ref{eq:st2}) has the presentation
$$\langle x_1, y_1, \ldots, x_n, y_n, p_1, \ldots, p_{m+1} \mid \prod_{i=1}^n [x_i,y_i]
\prod_{i=1}^{m+1} p_i = 1 \rangle,$$
and the edge groups are $\langle p_i \rangle, i = 1, \ldots, m+1$.)

If the equation is not regular and there are quadratic bases, then $G_{\Omega_v}$ splits as an
HNN-extension over an abelian subgroup. If there are infinitely small $h_i$'s in the quadratic part
such that $\gamma(h_i) = 1$ then $G_{\Omega_v'}$ splits as a free product of the group of the
generalized equation on the non-quadratic part (obtained by removing the quadratic part and all bases
that have doubles in quadratic part) and a free group.

We say that a base {\em participates} in the infinite process of entire transformation if it
participates infinitely many times as a leading or transfer base.

\begin{lemma}
If $\sigma$ has quadratic-coefficient bases, and the entire transformation goes infinitely, then
after a finite number of steps there will be quadratic bases that have length infinitely larger than
all participating quadratic coefficient bases.
\end{lemma}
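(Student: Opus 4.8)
The plan is to track how the lengths $|h_i|$ distribute among the bases during the periodic tail of the process, and to show that infiniteness together with the presence of frozen coefficients forces the quadratic bases into a strictly higher convex subgroup of $\Lambda$ than the participating quadratic-coefficient bases. First I would use the structural input already available: since the entire transformation runs infinitely with a constant number of bases in the active part, the process is eventually periodic, so there is a fixed set $B$ of participating bases and a period after which the combinatorial type of $\Omega_v$ repeats (this is the quadratic-case analogue of Lemma \ref{lin-el}). Passing to this tail I would split $B$ into the quadratic bases, whose dual also lies in $\sigma$, and the participating quadratic-coefficient bases $c$, whose dual $\overline c$ lies outside $\sigma$ in the non-active part.

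The first key point is that the quadratic-coefficient bases cannot grow. For such a base $c$ the dual $\overline c$ sits in a non-active section which the entire transformation of $\sigma$ never touches, so $|\overline c|$ is constant throughout; since dual bases satisfy the basic equation and therefore always have equal length, every cut of $c$ is mirrored on $\overline c$, and $|c|$ can only split or be relocated, never increase. Hence all participating quadratic-coefficient bases keep their length inside a fixed convex subgroup $\Lambda_{i_0}$, i.e. $ht(c) \leqslant i_0$ for every such $c$. Next I would show that in the periodic tail the carrier base is always quadratic: were a quadratic-coefficient base $c$ a carrier, the entire transformation would transfer the active bases onto $\overline c$, pushing active content permanently into the non-active part and strictly lowering the complexity $\tau$, which can happen only finitely often. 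So in the tail every carrier is quadratic, and since carriers are the leading bases of maximal right endpoint and the process never stops, the leading part of the active section is always carried by quadratic bases.

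It remains to produce the height gap. Suppose, for contradiction, that the quadratic bases were also confined to $\Lambda_{i_0}$, so that every participating base had length in $\Lambda_{i_0}$. Projecting to the Archimedean quotient $\Lambda_{i_0}/\Lambda_{i_0 - 1}$ turns the entire transformation into a subtractive, Rauzy-type algorithm on positive real base lengths in which the coefficient lengths are held fixed. An infinite such algorithm forces the quadratic lengths to realize an irrational interval exchange; but that configuration is detected as a regular quadratic equation and yields a QH vertex group, so the combinatorial recursion would terminate rather than repeat periodically, contradicting that we are in the infinite entire-transformation regime. Therefore the quadratic bases cannot all stay in $\Lambda_{i_0}$: after finitely many periods some surviving quadratic base attains height $i_1 > i_0$, while every participating quadratic-coefficient base remains inside $\Lambda_{i_0}$. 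Since the quadratic-case hypothesis $\gamma(h_i) = 2$ for all $h_i$ comparable with the active length ties this top class to the quadratic structure, this is exactly the assertion that the quadratic bases become infinitely longer than all participating quadratic-coefficient bases.

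The main obstacle I expect is making the passage to the Archimedean quotient rigorous and justifying the dichotomy used above. One must verify that projecting modulo $\Lambda_{i_0 - 1}$ conjugates one period of the entire transformation to a well-defined subtractive move on real lengths, so that the cutting (ET1), deletion (ET4), and repeated relocation of coefficient bases descend correctly, and that an infinite subtractive process with fixed coefficients can only come from an irrational interval exchange. Ruling out the interleaving scenario, in which subtraction keeps the quadratic and coefficient bases in a single Archimedean class indefinitely without producing a recognizable surface, is where the real work lies, and it is exactly the step that forces the height separation claimed by the lemma.
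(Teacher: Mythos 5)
Your opening moves coincide with the paper's: the observation that after finitely many steps a quadratic-coefficient base can serve only as a transfer base (a coefficient carrier would push quadratic bases out of $\sigma$, which can happen only finitely often) is exactly the first sentence of the paper's proof, and the height-stability of the coefficient bases is unobjectionable. The genuine gap is in your third paragraph, where the whole content of the lemma lives. Your contradiction --- project to the Archimedean quotient, claim that an infinite subtractive process with frozen coefficient lengths forces an irrational interval exchange, which ``is detected as a regular quadratic equation and yields a QH vertex group, so the combinatorial recursion would terminate rather than repeat periodically'' --- does not work in this framework. In the paper the QH vertex is extracted precisely \emph{because} the entire transformation runs infinitely (see the quadratic case and case (a) of the general JSJ case): detecting a QH vertex is a conclusion drawn from the infinite regime, not an alternative to it, so no contradiction between ``infinite run'' and ``QH detected'' is available. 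Indeed, the present lemma is itself an ingredient used inside that infinite regime, so an argument premised on their incompatibility is circular with respect to the surrounding process. You concede the point yourself when you write that ruling out the interleaving scenario ``is where the real work lies'' --- that is, the central step is left open.

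What actually closes the gap is a short length count that your proposal never makes, and it is the paper's entire argument: each time a quadratic-coefficient base of length $\ell$ is used as a transfer base, the entire transformation decreases the length of the quadratic part by $\ell$ (the coverage it vacates is left to the carrier alone and is cut off and deleted). A \emph{participating} coefficient base is by definition used infinitely many times, while the quadratic part has a fixed length in $\Lambda$; infinitely many decrements by $\ell$ are possible only if $\ell$ is infinitely smaller than the length of the quadratic part. Since the quadratic part is covered by boundedly many bases, some quadratic base has length comparable with the quadratic part, hence infinitely larger than every participating coefficient base. Note that this argument needs neither your eventual-periodicity claim (which is itself unjustified: constancy of the number of bases in the active part does not yield a repeating combinatorial type, and nothing like it is proved in the quadratic setting) nor any passage to interval-exchange dynamics; the non-Archimedean comparison is produced directly by the counting, not by a rigidity phenomenon in the Archimedean quotient.
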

\begin{proof}
After finite number of steps quadratic-coefficient bases can only be used as transfer bases (otherwise
quadratic bases would be leaving $\sigma$, and this cannot happen infinitely many times). Every time
such a base of length $\ell$ used as a transfer base, the length of the quadratic part decreases by
$\ell$. For the process to go infinitely, $\ell$ must be infinitely less than the length of the
quadratic part. Therefore there is a quadratic base that have length infinitely larger than all
participating quadratic coefficient bases.
\end{proof}

If $\sigma$ does not have quadratic-coefficient bases then $G_{R(\Omega_v)}$ splits as a free product
with one factor being a closed surface group or a free group.

We move $\sigma$ into a non-active part and thus decrease the complexity $\tau$. Eventually in this
case we will make the active part empty.

\subsubsection{Almost Quadratic case}
\label{almost_quad_case}

In this case $\gamma_i = 2$ for all $h_i$ in the active part such that $|h_i|$ is comparable with
the length of the active part (some short bases may be covered once or more than twice).
This case is, actually, a part of the case considered in the next
subsection, but we consider it separately to make the subdivision into the cases depend only on
$\gamma_i$ for the variables $h_i$ of the maximal height.

We apply the transformation (D1) to close  quadratic sections that begin with  bases of the maximal
height and put them in front of the interval. This may increase the complexity by two times the number
of closed sections that we put in front of the interval applying (D1). Then we apply the entire
transformation. Every time when the content of one of the closed sections in front of the interval is
moved to the other closed section, the complexity decreases by two. If the process  goes infinitely
then eventually the number of quadratic bases of maximal height does not decrease. After a finite
number of steps participating quadratic-coefficient bases can only be used as transfer bases. And,
therefore, there will be participating quadratic bases that have length infinitely larger than all
participating quadratic coefficient bases.

We fill in all the $h_i's$ in the active part such that $\gamma_i = 1$ by new (infinitely short)
bases $\mu$ such that $\overline\mu$ covers a new free variable in the non-active part appearing only
once. Then we work with the quadratic part the same way as in  the quadratic case. We repeat the
described transformation until there is no quadratic base on the active part that has length comparable
with the length of the remaining active part. Then we  consider the remaining generalized equation
in the active part. We remove from the active part doubles of all quadratic coefficient bases that
belong to non-active part (doing this we may create new boundaries). We will, certainly remember the
relations corresponding to these pairs of bases. In this case the remaining generalized equation has
smaller complexity. Relations corresponding to the quadratic sections that we made non-active show
that $G_{\Omega}$ is an HNN-extension of the subgroup generated by the variables in the active part
and (maybe) a free group.

Removing these double bases we have to add equations to $\Sigma_{v_{i+1}}$ that guarantee that the
associated cyclic subgroups are generated by elements of the same length.

\subsubsection{General JSJ-case}
\label{gen_jsj_case}

Generalized equation $\Omega_v$ satisfies the condition $\gamma_i \geqslant 2$ for each $h_i$ in the
active part such that $|h_i|$ is comparable with the length of the active part, and $\gamma_i > 2$
for at least one $h_i$. First of all, we fill in all the $h_i's$ in the active part such that
$\gamma_i = 1$ by new (infinitely short) bases with doubles corresponding to free variables in the
non-active part. We apply the transformation (D1) to close the quadratic part and put it in front of
the interval.

\paragraph{(a) QH-subgroup case.}

Suppose that the entire transformation of the quadratic part (D8) goes infinitely. Then the quadratic
part of $\Omega_v$ (or the initial  section from the beginning of the  quadratic part until the first
base on the quadratic part that does not participate in the entire transformation) corresponds to a
QH-vertex or to the representation of $G_\Omega$ as an HNN-extension, and there is a quadratic base
(on this section) that is infinitely longer than all the quadratic coefficient bases (on this section).
We work with the quadratic part the same way as in the quadratic case until there is no quadratic
base satisfying the condition above. Then we make the quadratic section non-active, and consider the
remaining generalized equation where we remove doubles of all the quadratic coefficient bases. We
certainly have to remember that the bases that we removed express some variables in the quadratic
part (that became non-active) in the variables in the active part.} We have to add an equation to
$\Sigma_{v_{i+1}}$ that guarantees that the associated cyclic subgroups are generated by elements of
the same length. In this case the subgroup of $G_\Omega$ that is isomorphic to the coordinate group
of the new generalized equation in active part is a vertex group in an abelian splitting of
$G_\Omega$ and has smaller Delzant--Potyagailo complexity.

\paragraph{(b) QH-shortening}

\begin{lemma}
Suppose that the quadratic part of $\Omega$ does not correspond to the HNN-splitting of $G_\Omega$
(there are only quadratic coefficient bases), or, we cannot apply the entire transformation to the
quadratic part infinitely. In this case either $G_{\Omega}$ is a non-trivial free product or, applying
the automorphism of $G_\Omega$, one can replace the words corresponding to the quadratic bases in
the quadratic part by such words that in the new solution $H^+$ of $\Omega$ the length of the
quadratic part is bounded by some function $f_1(\Omega)$ times the length of the non-quadratic part.
Solution $H^+$ can be chosen consistent with $H$.
\end{lemma}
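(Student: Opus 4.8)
The plan is to exploit the fact that the quadratic part of $\Omega$ determines a QH-subgroup $Q \leqslant G_\Omega$ -- the fundamental group of a punctured surface $S$ whose boundary (coefficient) words are exactly the quadratic-coefficient bases, expressed in the non-active variables. A solution $H$ of $\Omega$ restricts to a solution of the associated standard quadratic equation over $R(\Lambda,Z)$, and the lengths of the coefficient words are fixed and controlled by the non-quadratic part. The shortening will be produced by the \emph{canonical automorphisms} of $G_\Omega$: every element of the mapping class group of $S$ (a product of Dehn twists along simple closed curves) acts on $Q$ fixing each boundary subgroup up to conjugacy, and since in the graph-of-groups decomposition the boundary subgroups $\langle p_i \rangle$ are precisely the edge groups incident to the QH-vertex $Q$, each such automorphism restricts to a conjugation on every edge group and hence extends by the identity outside $Q$ to an automorphism $\phi$ of $G_\Omega$. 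For any such $\phi$ the composite $\xi_H \circ \phi$ is again a homomorphism $G_\Omega \to R(\Lambda,Z)$, hence a new solution $H^+$ of $\Omega$; this is the source of the shortened solution.

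First I would show that the entire transformation (D8), applied to the quadratic part, is realized on the group level by these canonical automorphisms: each carrier/transfer step corresponds to a Dehn twist (or boundary-preserving mapping-class element), so running the entire transformation for $k$ steps amounts to replacing $H$ by a solution with $\xi_{H^+} = \xi_H \circ \phi_k$ for a canonical automorphism $\phi_k$ of $G_\Omega$. I would then take $H^+$ to be \emph{terminal} under this descent -- a solution obtained from $H$ by a canonical automorphism whose quadratic part can no longer be shortened by the entire transformation. By the hypothesis that the entire transformation does not iterate infinitely on the quadratic part, such a terminal $H^+$ is reached after finitely many steps. Consistency $C(H^+) \subseteq C(H)$ is preserved because the reduction moves only cancel subwords already present in the images under $H$ rather than creating new cancellation, so at each step the cancellation table can only shrink.

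It then remains to bound the quadratic part of the terminal solution. Starting from $H^+$ and attempting one more entire-transformation step, one of two things happens. If at some stage a quadratic base becomes lone or a quadratic variable becomes free -- i.e.\ the surface equation degenerates and a genuine free factor splits off via the kernel analysis of (D5) and the isomorphisms (\ref{eq:ker1}), (\ref{eq:ker2}) -- then $G_\Omega$ is a non-trivial free product, which is the first alternative. Otherwise the descent halts with the quadratic part still present, and since every step that strictly shortens the quadratic part is itself a canonical automorphism, it cannot shorten $H^+$ further without contradicting terminality; hence at $H^+$ no carrier base is longer than the accumulated coefficient bases. Counting, the number of quadratic and quadratic-coefficient bases is bounded by the combinatorics of $\Omega$, and each step removes at most the length of one coefficient base, so the total $\Lambda$-length of the quadratic part at $H^+$ is at most $f_1(\Omega)$ times the length of the non-quadratic part, where $f_1(\Omega)$ depends only on the number of bases and boundaries of $\Omega$.

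I expect the main obstacle to be the explicit linear bound $f_1(\Omega)$ in the $\Lambda$-word (non-Archimedean) setting: one must verify that whenever the quadratic part exceeds $f_1(\Omega)$ times the coefficient length, an entire-transformation step -- equivalently a canonical automorphism -- strictly decreases its $\Lambda$-length, and that this descent genuinely terminates rather than merely decreasing without stabilizing. In the classical free-group case this is the statement that minimal solutions of quadratic equations have length linear in the coefficients; here it has to be re-derived from the free axioms (L4), (L5) and the regularity axiom (L6), together with the cancellation combinatorics of $R(\Lambda,Z)$, taking care that $\Lambda$-lengths (rather than integer lengths) still admit the required descent and that the resulting bound is uniform across the whole orbit of $H$ under the canonical automorphisms.
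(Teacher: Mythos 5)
Your skeleton does coincide with the paper's at the decisive points: the entire transformation (D8) applied to the quadratic part is realized on the group level by automorphisms of $G_\Omega$ (the paper phrases this as a sequence of Tietze transformations of the generators), the new solution is $\xi_{H^+} = \xi_H \circ \phi$, and consistency $C(H^+) \subseteq C(H)$ holds because each step only rewrites variables as subwords of existing values. But your bound argument has a genuine gap, and you flag it yourself. You propose to pass to a solution \emph{terminal} under the descent by canonical automorphisms and then, in your last paragraph, identify as the ``main obstacle'' the verification that a solution whose quadratic part exceeds $f_1(\Omega)$ times the coefficient length can always be strictly shortened and that the descent stabilizes. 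In the non-Archimedean setting this route fails outright: the positive cone of $\Lambda$ is not well-ordered, so a strictly decreasing sequence of $\Lambda$-lengths need not terminate, and a minimal (``terminal'') solution in the orbit of $H$ under canonical automorphisms need not exist. The paper needs no minimality whatsoever. The lemma's hypothesis is precisely that the entire transformation on the quadratic part does \emph{not} iterate infinitely (otherwise one is in the QH-subgroup case (a)); so the process halts after finitely many steps for reasons given, not derived. At halting, each participating quadratic base has been transferred onto a dual lying in the non-quadratic part, hence its image under $H^+$ is a subword of the non-quadratic part and its length is bounded by the length of that part; since the number of bases is a combinatorial invariant of $\Omega$, summing gives the factor $f_1(\Omega)$ immediately, with no descent, no minimal solutions, and no re-derivation of the classical linear bound for quadratic equations.

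Your free-product alternative is also routed through the wrong mechanism. You invoke a quadratic base becoming lone, or a variable becoming free, together with the kernel analysis of (D5) --- but that is the linear-elimination machinery, which presupposes an item with $\gamma(h_i) = 1$, whereas in the quadratic part every relevant item is covered exactly twice. The paper's mechanism is different: if not all participating bases are transferred into the non-quadratic part, then reincarnations of the leftover quadratic bases must align with their duals, forming \emph{matching pairs} removable by (ET3); removing such a pair uncovers the items beneath it (each covered exactly twice, both times by the pair), producing free variables and hence a nontrivial free decomposition of $G_\Omega$. A free variable is indeed the proximate source of the free factor, so your conclusion is in the right place, but the event that produces it is the matching pair, not a kernel computation. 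Finally, the mapping-class-group/Dehn-twist framing of the automorphisms is a harmless and standard gloss on what (D8) induces on a quadratic section, but it buys nothing here and imports the surface-group picture even in the degenerate case of the hypothesis where the quadratic part carries only quadratic-coefficient bases --- a case your argument never addresses, though there the bound is immediate since every item of the quadratic part is covered by bases whose duals lie in the non-quadratic part.
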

\begin{proof}
We apply entire transformation to the quadratic part. This corresponds to a sequence of Tietze
transformations of generators of $G_\Omega$. Since the entire transformation does not go infinitely,
some of the bases will eventually be transferred to the non-quadratic part (therefore, their length
is bounded by the length of the non-quadratic part). If not all of them are transferred, then
reincarnations of some of the other participating bases  must form matching pairs giving a
representation of $G_\Omega$ as a free product.
\end{proof}

If there is a matching pair, we replace $G_{\Omega}$ by the group obtained by removing a cyclic free
factor corresponding to a matching pair. We also replace $\Omega$ by the generalized equation
obtained by removing the matching pair. The Delzant--Potyagailo complexity decreases.

\paragraph{(c) Abelian splitting: short shift.}

\begin{proposition}
\label{PerSt}
Suppose $\Omega_v$ satisfies the following condition: the carrier base $\mu$ of the equation
$\Omega_v$ intersects with its dual $\overline\mu$ (form an overlapping pair) and is at least twice
longer than $|\alpha(\overline\mu) - \alpha(\mu)|$. Then $G_{\Omega_v}$ either splits as a
fundamental group of a graph of groups that has a free abelian vertex group or splits as an
HNN-extension with abelian associated subgroups.
\end{proposition}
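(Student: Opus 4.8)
The plan is to read off a \emph{periodic structure} from the overlapping carrier pair and then convert it into the desired abelian splitting by Bass--Serre theory. Write $d = |\alpha(\overline\mu) - \alpha(\mu)|$ for the shift; since $\mu$ is the carrier base we have $\alpha(\mu) = 1$, and the overlapping hypothesis means $0 < d < |\mu|$. First I would exploit the basic equation attached to the pair $(\mu, \overline\mu)$. In the orientation-preserving case $\varepsilon(\mu) = \varepsilon(\overline\mu)$ it reads
\[
h[\alpha(\mu) \ldots \beta(\mu)-1] = h[\alpha(\overline\mu) \ldots \beta(\overline\mu)-1],
\]
and since the right-hand side is the left-hand side shifted by $d$, the $\Lambda$-word $W$ occupying the section $[1, \beta(\overline\mu)]$ satisfies $W(i) = W(i+d)$ throughout its range; that is, $W$ is periodic of period $d$. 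The assumption $|\mu| \geqslant 2d$ guarantees that this range has length at least $3d$, so $W$ contains at least two full periods and the period is genuine: there is a cyclically reduced $\Lambda$-word $\mathcal{P}$ of length $d$ with $W$ an initial segment of a power of $\mathcal{P}$. (The orientation-reversing case forces a reflection relation on $W$; it is treated analogously and either reduces to the periodic case after extracting a root or lands in the quadratic regime already handled.)

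Next I would build the periodic structure around $\mathcal{P}$. Regarding $\mathcal{P}$ as an element of $CDR(\Lambda, Z)$, let $A$ be its centralizer in $G_{\Omega_v}$ (equivalently, the centralizer in $CDR(\Lambda,Z)$ intersected with $G_{\Omega_v}$); because $\mathcal{P}$ is cyclically reduced this centralizer is abelian, in fact maximal abelian. I would then classify every base lying inside the periodic section $[1, \beta(\overline\mu)]$ according to whether the $\Lambda$-word it carries is itself $\mathcal{P}$-periodic. Bases compatible with $\mathcal{P}$ contribute relations expressing their items as powers of $\mathcal{P}$ up to conjugacy, while the remaining transversal data records how the period is glued to the non-periodic part of $\Omega_v$ through the closed boundaries of the section. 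This is precisely the periodic-structure bookkeeping underlying the abelian vertex groups referred to in Section~\ref{se:5.2}.

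Finally I would extract the splitting. If the periodic structure closes up on itself --- every base of the section being $\mathcal{P}$-periodic and no base producing a net shift --- then the subgroup of $G_{\Omega_v}$ generated by the items of the section is abelian, generated by powers and roots of $\mathcal{P}$, and it appears as a free abelian vertex group in a graph-of-groups decomposition of $G_{\Omega_v}$, with the cyclic groups carried by the closed boundaries of the section as edge groups. If instead some base --- in particular the carrier $\mu$ itself, which identifies position $i$ with position $i+d$ --- realizes a genuine shift by $\mathcal{P}$, then that identification is implemented by a stable letter $t$ conjugating one power of $\mathcal{P}$ to another, so $G_{\Omega_v}$ splits as an HNN-extension whose associated subgroups are generated by powers of $\mathcal{P}$ and hence abelian. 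In either case Bass--Serre theory yields the asserted decomposition.

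The main obstacle I anticipate is the combinatorial analysis of the second step: one must verify that the bases inside the periodic section genuinely organize into the two clean alternatives of the third step, handle bases that protrude past the boundaries of the section, check that the property of being $\mathcal{P}$-periodic is preserved under the transformations involved, and confirm that the resulting edge and vertex groups are exactly the abelian groups claimed. Establishing that these two cases are exhaustive --- that under the carrier hypothesis no other type of base can occur in the periodic section --- is where the real work lies; the passage from the combinatorial periodic structure to the honest graph of groups is then a routine application of Bass--Serre theory.
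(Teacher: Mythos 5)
Your first step---extracting a period $\mathcal{P}$ of length $d = |\alpha(\overline\mu)-\alpha(\mu)|$ from the overlapping pair, so that the word on $[1,\beta(\overline\mu)]$ is $\mathcal{P}$-periodic---agrees with the paper's setup, and your second step is a fair sketch of the periodic-structure bookkeeping (though the paper must additionally close the structure under duality and first builds an auxiliary equation $\widehat\Omega_v$, adding a new pair of bases and a constant section and applying entire transformations, so that the whole equation becomes a union of closed sections each carrying an overlapping pair; periodicity then propagates from section to section via the chain construction of Lemma \ref{periodicity}). The genuine gap is your third step, and you flag it yourself: the ``exhaustiveness of the two cases'' that you defer \emph{is} the proof. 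Your dichotomy is neither exhaustive nor correctly analyzed. In your ``closed up'' alternative, the subgroup generated by the items of the section is not abelian in general: items shorter than $2|\mathcal{P}|$ need not be periodic at all, and in the paper's decomposition the non-abelian vertex group $K$ absorbs all of this short material, while the free abelian vertex group is generated not by ``powers and roots of $\mathcal{P}$'' but by the labels $h(\overline c^{(1)}), h(\overline c^{(2)})$ of independent cycles of an auxiliary graph. In your HNN alternative, note that the carrier base \emph{always} realizes a shift by $d$ (that is what an overlapping pair means), so as stated your dichotomy collapses into one case; worse, the would-be stable letter is a product of the items $h_i$, not an independent generator, so one cannot simply declare an HNN splitting---one must exhibit a basis of $G_{\Omega_v}$ in which that element occurs in every relator only in conjugation position, and nothing in your plan produces such a basis.

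Supplying that basis is the actual content of the paper's argument and has no counterpart in your proposal. The paper forms the graph $\Gamma$ whose vertices are $R$-equivalence classes of boundaries and whose edges are the long items, proves via the CSA property that labels of any two cycles at a common vertex commute (Lemmas \ref{paths} and \ref{cycles}), chooses a maximal subforest $T_0$ of the short-labelled subgraph $\Gamma_0$ extended to a spanning tree $T$, and then runs an abelianization argument ($\widetilde Z = \widetilde Z_1 \oplus \widetilde Z_2$ with $\widetilde B \subseteq \widetilde Z_1$ of finite index) to produce a basis $\overline c^{(1)}, \overline c^{(2)}$ of $\pi_1(\Gamma, v_0)$. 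Only after rewriting in the resulting generating set $Y$ can one verify that every occurrence of a long edge variable $h(e_i)$ has the form $h(e_i)^{-1}\, h(c)\, h(e_i)$, whence $G_{\Omega'}$ is obtained from the vertex group $K$ by HNN-extensions with abelian associated subgroups $\langle \overline u^{(i)} \rangle, \langle \overline z^{(i)} \rangle$, followed by an extension of the centralizer of $\langle h(\overline c^{(1)}) \rangle$ by the free abelian group on $h(\overline c^{(2)})$ (Lemma \ref{2.10''}); note also that the associated subgroups are generated by the tuples $\overline u^{(i)}, \overline z^{(i)}$ of cycle labels, not by powers of a single period. Your two cases would have to be replaced by this analysis; as written, the passage from the periodic structure to the splitting is asserted rather than proved, and it is not ``a routine application of Bass--Serre theory.''
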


The proof uses the technique of so-called periodic structures introduced by Razborov in his Ph.D
thesis and almost repeats the proof given in \cite{KMIrc} to show that the coordinate group of a
generalized equation splits in this case  as a fundamental group of a graph of groups that has a
free abelian vertex group or splits as an HNN-extension with abelian associated subgroups.
Nevertheless, we have to adjust the proof to the case of infinite non-Archimedean words. For
convenience of the reader we give a detailed proof of the proposition.

We construct an auxiliary equation ${\widehat\Omega_v}$ (which does not occur in $T(\Omega)$)
as follows. Firstly, we add a new constant non-active section $[\rho_v + 1, \rho_v + 2]$ to
the right of all sections in $\Omega_v$ (in particular, $h_{\rho_v + 1}$ is a new free variable).
Secondly, we introduce a new pair of bases $(\lambda, \overline\lambda)$ such that
$$\alpha(\lambda) = 1,\ \beta(\lambda) = \beta(\overline\mu),\ \alpha(\overline\lambda) = \rho_v
+ 1, \ \beta(\overline\lambda) = \rho_v + 2.$$
Notice that $\Omega_v$ can be obtained from ${\widehat\Omega_v}$ by (ET4): deleting
$\overline\lambda$ together with the closed section $[\rho_v + 1, \rho_v + 2]$. Let
$${\widehat\pi}_v : G_{\Omega_v} \rightarrow G_{\widehat\Omega_v}$$
be the isomorphism induced by (ET4). General JSJ-case still holds for $\widehat\Omega_v$, but now
$\lambda$ is the carrier base. Applying to $\widehat\Omega_v$ the entire transformation we obtain a
new vertex $\Omega_{v'}$ together with the isomorphism
$$\eta_{v'} : G_{\widehat\Omega_v} \rightarrow G_{\Omega_{v'}}.$$

As a culmination of the above process of adding auxiliary edges we get a generalized equation
$\Omega'$, which consists of finitely many closed sections $\sigma_1, \ldots, \sigma_p$ such that
there exists an overlapping pair $(\mu_i, \overline\mu_i)$ for each $\sigma_i,\ i \in [1,p]$.

\smallskip

Let $\sigma = \sigma_{i_0}$, where $i_0 \in [1,p]$ have the overlapping pair $(\mu,\overline\mu)$
such that
$$|[\alpha(\mu),\alpha(\overline\mu)]| = \max_{i \in [1,p]} |[\alpha(\mu_i),
\alpha(\overline\mu_i)]|.$$
Without loss of generality we can assume $i_0 = 1$, that is, $\sigma = [1, \beta(\overline\mu)]$.

\smallskip

A {\em periodic structure} on $\Omega'$ is a pair $\langle {\mathcal P}, R \rangle$, where
\begin{enumerate}
\item ${\mathcal P}$ is a set consisting of some items $h_i$, some bases $\lambda$, and some closed
sections $\sigma$ from $\Omega'$ such that the following conditions are satisfied:
\begin{itemize}
\item[(a)] if $h_i \in {\mathcal P}$ and $h_i \in \lambda$ then $\lambda \in {\mathcal P}$,
\item[(b)] if $\lambda \in {\mathcal P}$, then $\overline\lambda \in {\mathcal P}$,
\item[(c)] if $\lambda \in {\mathcal P}$ and $\lambda \in \sigma$, then $\sigma \in {\mathcal P}$,
\item[(d)] there exists a function ${\mathcal X}$ mapping the set of closed sections from
${\mathcal P}$ into $\{-1, +1\}$ such that for every $\lambda, \sigma_1, \sigma_2 \in {\mathcal P}$,
the condition that $\lambda \in \sigma_1$ and $\overline\lambda \in \sigma_2$ implies
$\varepsilon(\lambda) \cdot \varepsilon(\overline\lambda) = {\mathcal X}(\sigma_1) \cdot
{\mathcal X}(\sigma_2)$.
\end{itemize}
\item $R$ is an equivalence relation on the set of boundaries ${\mathcal B}$ belonging to closed
sections from ${\mathcal P}$ (if a boundary $l$ belongs to two closed section $\sigma_{left}(l) =
[i,l], \sigma_{right}(l) = [l,j]$ from ${\mathcal P}$ then instead of $l$ we add to ${\mathcal B}$
formal copies $l_{left}, l_{right}$ of $l$, which are boundaries of $\sigma_{left}(l)$ and
$\sigma_{right}(l)$ respectively), defined as follows: if $\lambda \in {\mathcal P}$ then
$$\alpha(\lambda) \sim_R \alpha(\overline\lambda),\ \beta(\lambda) \sim_R \beta(\overline\lambda)\
if\ \varepsilon(\lambda) = \varepsilon(\overline\lambda)$$
$$\alpha(\lambda) \sim_R \beta(\overline\lambda),\ \beta(\lambda) \sim_R \alpha(\overline\lambda)\
if\ \varepsilon(\lambda) = -\varepsilon(\overline\lambda).$$
\end{enumerate}

We call items, bases and sections {\em long} if they belong to ${\mathcal P}$ (the rest items, bases
and sections of $\Omega'$ are called {\em short} respectively).

\smallskip

A {\em period} is just a cyclically reduced $\Lambda$-word $u$. A reduced infinite word $w$ is called
{\em $u$-periodic} for a period $u$ if $w^{-1} \ast u \ast w = v$, where $|u| = |v|$. Since $u$ is
cyclically reduced it follows that either $w^{-1} \ast u = w^{-1} \circ u$, or $u \ast w = u \circ
w$. We are going to distinguish two cases of periodicity:
\begin{enumerate}
\item $w$ is {\em unbounded} $u$-periodic if $ht(w) \geqslant ht(u)$,
\item $w$ is {\em bounded} $u$-periodic if $ht(w) = ht(u)$ and $w = u^k \circ u_1$, where $k \geqslant
2$ and $u = u_1 \circ u_2$.
\end{enumerate}
Observe that in both cases $w$ begins with a power of $u$, but in the former case the power is
unbounded, while in the latter case it is bounded (hence the terms used).

\begin{lemma}
\label{product}
Let $w_1,w_2 \in CDR(\Lambda,Z)$, where $|w_1| \geqslant |w_2|$, be $u$-periodic for some period $u \in
CDR(\Lambda,Z)$. Assume that $w_1^{-1} \ast u \ast w_1 = v_1,\ w_2^{-1} \ast u \ast w_2 = v_2$. If
$w_1^{-1} \ast w_2$ is defined in $CDR(\Lambda,Z)$ then $w_2$ cancels completely in this product.
Moreover, $w_1^{-1} \ast w_2$ is $v_1^{-1}$-periodic.
\end{lemma}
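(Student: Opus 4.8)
The plan is to reduce the whole statement to a single structural fact: that the shorter word $w_2$ is an initial segment of the longer word $w_1$. Once this is known, the assertion about complete cancellation is immediate from the definition of $\ast$, and the periodicity of the quotient follows from a short conjugation computation. So the heart of the argument is the comparison of two $u$-periodic words of comparable length.

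First I would use the observation recorded just before the lemma, namely that a $u$-periodic word begins with a power of $u$. Since $u$ is cyclically reduced, consecutive copies of $u$ do not cancel, so the right-infinite periodic word $u^{\infty}$ determined by $u$ is a well-defined reduced $\Lambda$-word, and each of $w_1,w_2$ agrees with $u^{\infty}$ throughout its own domain. Thus $w_1$ and $w_2$ are initial segments of one and the same ray $u^{\infty}$. Because initial segments of a fixed word are nested and $|w_1| \geqslant |w_2|$, the word $w_2$ is an initial segment of $w_1$, say $w_1 = w_2 \circ \tilde w_1$. Hence ${\rm com}(w_1,w_2) = w_2$, and by the definition of the product $w_1^{-1} \ast w_2 = \tilde w_1^{-1}$; in other words $w_2$ is entirely consumed by cancellation, which is the first assertion.

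For the ``Moreover'' I would verify directly the conjugation relation that defines $v_1^{-1}$-periodicity. Note first that $v_1$ is a cyclic permutation of $u$ of the same length, hence cyclically reduced, so $v_1^{-1}$ is a legitimate period. Put $z = w_1^{-1} \ast w_2 = \tilde w_1^{-1}$, so that $z^{-1} = \tilde w_1$ and $w_1 = w_2 \circ \tilde w_1$; the latter gives the identities $\tilde w_1 \ast w_1^{-1} = w_2^{-1}$ and $w_1 \ast \tilde w_1^{-1} = w_2$. Using $v_1 = w_1^{-1} \ast u \ast w_1$ one then computes
$$z^{-1} \ast v_1^{-1} \ast z = \tilde w_1 \ast w_1^{-1} \ast u^{-1} \ast w_1 \ast \tilde w_1^{-1} = w_2^{-1} \ast u^{-1} \ast w_2 = v_2^{-1},$$
and since $|v_2^{-1}| = |u| = |v_1^{-1}|$ this shows exactly that $z$ is $v_1^{-1}$-periodic (with conjugate $v_2^{-1}$). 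Here the hypothesis that $w_1^{-1}\ast w_2$ is defined in $CDR(\Lambda,Z)$ is what guarantees that $z$ and all the intermediate $\ast$-products above lie in the group-like part of $CDR(\Lambda,Z)$, so that these manipulations are valid.

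The main obstacle is the ray-agreement step in the non-Archimedean setting. In the bounded case $w_i = u^{k_i}\circ (u_i)$ with $k_i\in\mathbb{Z}$, and everything is elementary; but in the unbounded case the ``power'' of $u$ splitting off $w_i$ is an infinite element of $\Lambda$ rather than an integer, so I cannot simply write $w_i = u^{n_i}\circ c_i$ with $n_i\in\mathbb{Z}$ and compare exponents. The correct move is to bypass integer exponents entirely and argue at the level of domains: one must justify that $u^{\infty}$ is a well-defined reduced $\Lambda$-word over the convex subgroup of height $ht(u)$, that the conjugation definition of periodicity together with $ht(w_i)\geqslant ht(u)$ forces $w_i$ to coincide with $u^{\infty}$ on all of $[1,|w_i|]$, and that consequently $w_1$ and $w_2$ do not drift out of phase. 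With that in hand the nesting of initial segments is immediate, and the rest of the proof is formal.
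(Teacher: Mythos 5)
Your reduction of the first assertion to the claim that $w_1$ and $w_2$ are initial segments of \emph{one and the same} ray $u^{\infty}$ is the genuine gap, and the repair you propose in your closing paragraph is a false statement. Unwinding the definition of $u$-periodicity (via $u \ast w = u \circ w$ and full cancellation of $w^{-1}$ in $w^{-1} \ast (u \circ w)$), the condition on $w$ says exactly: $w(\beta) = u(\beta)$ for $\beta \leqslant |u|$, and $w(\beta) = w(\beta - |u|)$ for $|u| < \beta \leqslant |w|$. When $ht(w) > ht(u)$ this translation invariance determines $w$ only on positions of the form $q|u| + r$ with \emph{integer} $q$; elsewhere it is a constraint, not a determination, and there is no canonical ray $u^{\infty}$ of length $|w|$. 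Concretely, let $\Lambda = \mathbb{Z}^2$ ordered lexicographically with the first coordinate dominant, let $u = ab$ with $|u| = (0,2)$, and define $w$ on $[1,(1,0)]$ to spell $abab\cdots$ on the positions $(0,k)$, $k \geqslant 1$, and an arbitrary reduced $2$-periodic pattern $\cdots cdcd$ on the positions $(1,k)$, $k \leqslant 0$. Then $w^{-1} \ast u \ast w = cd$ has length $|u|$, so $w$ is unbounded $u$-periodic for \emph{every} choice of the letters $c,d$. Two such words with different upper patterns have equal length, are both $u$-periodic, and neither is an initial segment of the other (their ${\rm com}$ is undefined: the disagreement set has no least element). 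So nesting does not follow from periodicity alone, and your plan to show that the conjugation condition together with $ht(w_i) \geqslant ht(u)$ forces $w_i$ to coincide with $u^{\infty}$ on all of $[1,|w_i|]$ cannot be carried out. The same example shows that $v_1$ ($= cd$ here) need not be a cyclic permutation of $u$, contrary to the remark opening your ``Moreover'' step; $v_1$ is still cyclically reduced, but for a different reason (repeating the cancellation computation with the period $u^2$ shows $v_1 \ast v_1$ is defined of length $2|u|$).

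The hypothesis that $w_1^{-1} \ast w_2$ is defined --- which you invoke only at the end, to legitimize the $\ast$-manipulations --- is precisely what must be used to \emph{prove} nesting, not after it. Reading the definition, as you do, so that a $u$-periodic word begins with a positive power of $u$, put $D = \{\beta \leqslant |w_2| : w_1(\beta) \neq w_2(\beta)\}$. Both words carry the letters of $u$ on $[1, \min(|u|,|w_2|)]$, so every $\beta \in D$ satisfies $\beta > |u|$, and then the translation invariance of both words gives $\beta - |u| \in D$; hence $D$ has no least element. On the other hand, if $w_1^{-1} \ast w_2$ is defined then ${\rm com}(w_1,w_2)$ exists, and if $D \neq \emptyset$ its least element would be $|{\rm com}(w_1,w_2)| + 1$. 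Therefore $D = \emptyset$, i.e.\ $w_1 = w_2 \circ \tilde w_1$, which is the complete cancellation claim; from there your conjugation computation $z^{-1} \ast v_1^{-1} \ast z = v_2^{-1}$ with $|v_2^{-1}| = |v_1^{-1}|$ is correct as written and yields the ``Moreover'' clause. (The paper's own proof is the single line ``follows from the definition,'' so this shift-invariance argument is the content that actually needs supplying; the ray picture you rely on is sound only in the bounded, Archimedean regime.)
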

\begin{proof} Follows from the definition of $u$-periodic words.
\end{proof}

\begin{lemma}
\label{double shift}
If $w \in CDR(\Lambda,Z)$ is both $u$- and $v$-periodic for some periods $u, v \in CDR(\Lambda,Z)$
then $[u,v] = 1$. In particular, $u$ is unbounded $v$-periodic if $ht(u) > ht(v)$.
\end{lemma}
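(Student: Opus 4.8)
The plan is to reinterpret periodicity geometrically. The computation behind the definition of periodicity shows that conjugating a cyclically reduced $u$ by an initial segment of the one-sided infinite periodic word $u^{\infty} = u \circ u \circ \cdots$ returns a cyclic rotation of $u$ of the same length; hence the condition that $w$ be $u$-periodic is equivalent to $w$ being an initial segment of $u^{\infty}$, and likewise $v$-periodicity makes $w$ an initial segment of $v^{\infty}$. In particular $u^{\infty}$ and $v^{\infty}$ share the common initial segment $w$, and the whole statement reduces to a coincidence-of-periodic-words problem.

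First I would record a length bound. In the bounded case the definition gives $w = u^{k} \circ u_1$ with $k \geqslant 2$, so $|w| \geqslant 2|u|$, and symmetrically $|w| \geqslant 2|v|$; in the unbounded case $ht(w) \geqslant ht(u)$ (resp. $ht(v)$) makes $|w|$ dwarf $|u|$ (resp. $|v|$). In every case $2|w| \geqslant 2|u| + 2|v|$, hence $|w| \geqslant |u| + |v|$. Assuming without loss of generality that $|u| \geqslant |v|$, both $u$ and $v$ are initial segments of $w$ (they have length at most $|w|$ and $w$ is an initial segment of each of $u^{\infty}$ and $v^{\infty}$), so $u$ is an initial segment of $v^{\infty}$ of length $\geqslant |v|$ and $v$ is an initial segment of $u^{\infty}$. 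Thus it suffices to treat two periodic words whose infinite periodic extensions agree on an initial segment of length $\geqslant |u| + |v|$ and to show that their periods commute.

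The core step is a Fine--Wilf / Euclidean descent on the periods. Writing $u = v^{m} \circ v_1$ with $v = v_1 \circ v_2$ and $m \geqslant 1$, which is possible since $u$ is an initial segment of $v^{\infty}$ with $|u| \geqslant |v|$, the agreement of $u^{\infty}$ and $v^{\infty}$ on a segment of length $\geqslant |u| + |v|$ forces the remainder $v_1$ to be again an initial segment of the periodic word generated by the shorter of $v$ and the rotation $v_2 \circ v_1$. Iterating peels off periods exactly as in the Euclidean algorithm and produces a cyclically reduced $\Lambda$-word $t$ of which both $u$ and $v$ are (possibly $\Lambda$-) powers; then $u \ast v = v \ast u$ follows at once, so $[u,v] = 1$. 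Lemma \ref{product} is the tool that controls each descent step: it guarantees that when two $t$-periodic pieces are aligned the shorter cancels completely into the longer, so each stage stays inside $CDR(\Lambda,Z)$ and preserves periodicity.

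The main obstacle is that this descent is genuinely non-Archimedean: when $ht(u) \neq ht(v)$ one cannot subtract the shorter length from the longer finitely many times to reach the base range, since the two lengths lie in different convex subgroups of $\Lambda$. I would split by heights. If $ht(u) = ht(v)$, the descent lives inside a single convex quotient in which lengths behave essentially like integers, and the classical Fine--Wilf argument, combined with Lemma \ref{product}, applies and yields the common period $t$. If $ht(u) > ht(v)$, then $u$ is an initial segment of $v^{\infty}$ to unbounded depth relative to $|v|$, that is, $u$ is \emph{unbounded} $v$-periodic; this is precisely the asserted ``in particular'' clause, and in this regime the common-root conclusion, hence $[u,v] = 1$, follows directly from the unbounded periodicity together with Lemma \ref{product}, with no descent required. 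Assembling the equal-height and higher-height regimes gives $[u,v] = 1$ in all cases and delivers the ``in particular'' statement as the content of the higher-height case.
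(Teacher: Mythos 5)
Your setup is sound and matches the paper's skeleton: the reduction to ``$u^\infty$ and $v^\infty$ share the initial segment $w$ with $|w| \geqslant |u| + |v|$'' is correct, and so is the case split by height. But both cases are discharged incorrectly. In the equal-height case your Euclidean descent is not well-founded: the relevant convex quotient of $\Lambda$ is an Archimedean ordered group, i.e.\ a subgroup of $\mathbb{R}$, not of $\mathbb{Z}$, so ``lengths behave essentially like integers'' fails and the subtraction process need not terminate; worse, a remainder can drop to a strictly smaller height even when $ht(u) = ht(v)$ (in $\Lambda = \mathbb{Z}^2$ take $|u| = (1,1)$ and $|v| = (0,1)$: the first remainder has length $(1,0)$, after which no finite number of subtractions reduces $(0,1)$ below it). Moreover your target conclusion --- a cyclically reduced common root $t$ of which $u$ and $v$ are powers --- is both stronger than what the lemma asserts and unreachable by finite descent. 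The paper avoids all of this by quoting Lemma 3.3 of \cite{KhMS}; and from exactly the hypothesis you established, commutativity also follows by a direct prefix comparison with no descent at all: the prefix of $w$ of length $|u|+|v|$ equals $u \circ v$ when read along the $u$-periodic structure and equals $v \circ u$ when read along the $v$-periodic structure, whence $u \circ v = v \circ u$ and $[u,v] = 1$.

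In the case $ht(u) > ht(v)$ you correctly note that $u$ is unbounded $v$-periodic (this is the ``in particular'' clause), but you then assert that commutativity ``follows directly \ldots together with Lemma \ref{product}'' without an argument, and Lemma \ref{product} cannot supply one: it concerns two words periodic with the \emph{same} period, whereas the issue here is the interaction of two distinct periods. Unbounded $v$-periodicity of $u$ alone does not give $[u,v]=1$: a priori $u$ could end in the middle of a $v$-period, $u$ beginning with arbitrarily large powers of $v$ but terminating in a proper initial piece $v_1$ of $v$, and such a $u$ need not commute with $v$. The missing step --- which is the entire content of the paper's proof in this case --- is an alignment argument: $w$ begins with $u \cdot u$ (the exponent in the $u$-periodic structure of $w$ is at least $2$ in both the bounded and unbounded cases), and $v$-periodicity of $w$ gives the shift-invariance $w(\gamma + |v|) = w(\gamma)$; applied to the positions $[\,|u| - |v| + 1,\, |u|\,]$ this shows that the terminal subword of $u$ of length $|v|$ coincides with $w[\,|u|+1,\, |u|+|v|\,]$, which is the initial subword $v$ of the second copy of $u$. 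Thus $u$ ends exactly on a period boundary, and commutativity follows. Without this step your higher-height case is a restatement of the desired conclusion rather than a proof.
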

\begin{proof} If $ht(u) = ht(v)$ then commutativity follows from Lemma 3.3 \cite{KhMS} since powers
of $u$ and $v$ have a common initial segment of length greater than $|u| + |v|$. If $ht(u) > ht(v)$
then from $v$-periodicity of $w$ it follows that $u$ is $v$-periodic. Now, since $w$ contains any
natural power of $u$ as an initial subword it follows that the terminal subword of $u$ of length
$|v|$ must coincide with $v$ and commutativity follows.
\end{proof}

\smallskip

Let $U$ be a solution of the equation $\Omega'$, where $h_i \to U_i$. According to our assumption,
$X_\mu$ is $P$-periodic, where $P = U[1, \alpha(\overline\mu)]$. Since $\mu$ and $\overline\mu$ are
products of items then every $U_i,\ i \in [\alpha(\overline\mu), \beta(\mu)]$, such that $|U_i|
\geqslant 2|P|$, is $Q_i$-periodic, where $Q_i$ is a $\Lambda$-word such that $|Q_i| = |P|$. Moreover,
we can associate with $U$ a periodic structure ${\mathcal P}(U ,P) = \langle {\mathcal P}, R
\rangle$ as follows. We construct ${\mathcal P}$ as a union of the chain of sets
$${\mathcal P_1} \subseteq {\mathcal P_1} \subseteq \cdots \subseteq {\mathcal P_k},$$
where $k \in \mathbb{N}$. To construct ${\mathcal P_0}$ we add to it $\sigma_1$ and all $Q$-periodic
items $h \in \sigma_1$, where $|Q| = |P|$, together with any base $\lambda$ (and its dual) containing
$h$. Now, assume that ${\mathcal P_i}$ is constructed. If there exists a base $\lambda \in {\mathcal
P_i}$ such that $\overline\lambda \in \sigma_j$ then we set
$${\mathcal P_{i+1}} = {\mathcal P_i} \cup \{\sigma_j\} \cup \{h, \lambda \in \sigma_j \mid
h\ {\rm is\ a\ Q-periodic\ item\ where}\ |Q| = |P|,\ \lambda\ {\rm is\ a\ base\ containing}\ h\}.$$
Since $\Omega'$ contains only finitely many closed sections then this process stops.

\begin{lemma}
\label{periodicity}
Let $U$ be a solution of the equation $\Omega'$ and ${\mathcal P}(U, P)$ be a periodic structure
corresponding to $U$. Then every base (item) $\lambda \in {\mathcal P}$ is $Q_\lambda$-periodic,
where $|Q_\lambda| = |P|$ and $Q_\lambda$ is conjugate to $P$ in $G$.
\end{lemma}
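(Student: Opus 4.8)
The plan is to induct on the chain $\mathcal{P}_0 \subseteq \mathcal{P}_1 \subseteq \cdots \subseteq \mathcal{P}_k = \mathcal{P}$ used to build the periodic structure, proving at each stage the stronger statement that every long item and every long base acquires a period $Q$ with $|Q| = |P|$ that is \emph{conjugate to $P$ in $G$}. For the base of the induction I would work on $\sigma_1 = [1, \beta(\overline\mu)]$. By hypothesis the word $X_\mu = U[1,\beta(\mu)]$ under the carrier base is $P$-periodic for $P = U[1, \alpha(\overline\mu)]$, which I may assume cyclically reduced. Hence every long item $h_i \in \sigma_1$, being a subword of the $P$-periodic word $X_\mu$ with $|U_i| \geqslant 2|P|$, is $Q$-periodic for the cyclic permutation $Q$ of $P$ beginning at the appropriate residue modulo $|P|$; such a $Q$ is a conjugate of $P$. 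Since $h_i$ already carries the solution-period $Q_i$ with $|Q_i| = |P|$ by construction, Lemma \ref{double shift} applies to $U_i$ (which is simultaneously $Q_i$- and $Q$-periodic, both of height $ht(P)$) to give $[Q_i, Q] = 1$; as $Q_i$ and $Q$ are cyclically reduced of the same length, commutativity forces $Q_i = Q$, so $Q_i$ is conjugate to $P$.

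Next I would propagate periodicity from an item to the bases containing it and then across dual pairs. If a long base $\lambda$ contains a long $Q$-periodic item $h_i$, then $X_\lambda = U[\alpha(\lambda),\beta(\lambda)]$ contains the long $Q$-periodic subword $U_i$; unwinding the definition of $u$-periodicity through $w^{-1} \ast u \ast w = v$ and invoking Lemma \ref{product}, the periodicity spreads along the $\ast$-product of the items comprising $X_\lambda$, yielding a period $Q_\lambda$ of length $|P|$ conjugate to $P$. The inductive step then transports this across a dual pair: when $\sigma_j$ is adjoined because some $\lambda \in \mathcal{P}_i$ has $\overline\lambda \in \sigma_j$, the basic equation $X_\lambda = X_{\overline\lambda}^{\varepsilon(\lambda)\varepsilon(\overline\lambda)}$ forces $X_{\overline\lambda}$ to be periodic. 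If the signs agree then $X_{\overline\lambda} = X_\lambda$ keeps the period $Q_\lambda$; if they differ then $X_{\overline\lambda} = X_\lambda^{-1}$, and a short computation shows that $w^{-1}$ is $v$-periodic with $v = w^{-1} \ast u \ast w$ conjugate to $u$ whenever $w$ is $u$-periodic, so $\overline\lambda$ again inherits a period conjugate to $P$. Once $\sigma_j$ contains the long periodic base $\overline\lambda$, the remaining long items and bases of $\sigma_j$ are reached by the same subword/overlap argument as in the base case, reconciling their solution-periods with the inherited ones through Lemma \ref{double shift}.

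The main obstacle I anticipate is the middle step: spreading periodicity from a long periodic \emph{subword} of a base to the \emph{entire} base word in the non-Archimedean setting, where lengths live in $\Lambda$ and one must watch heights. This is exactly where Lemmas \ref{product} and \ref{double shift} carry the weight — the former guarantees that in a $\ast$-product of two $u$-periodic words the shorter one cancels completely and the result stays periodic, so periodicity survives the reductions hidden in the products representing the items, while the latter rules out two genuinely distinct periods of equal length coexisting on one long word. A secondary point needing care is checking that the conjugators produced along the way (cyclic permutations of $P$ and prefix products $U[1,s]$) represent genuine elements of $G$ under the fixed embedding $\xi : G \hookrightarrow CDR(\Lambda,Z)$, so that the conclusion that $Q_\lambda$ is conjugate to $P$ \emph{in $G$}, and not merely in the ambient $CDR(\Lambda,Z)$, is actually justified.
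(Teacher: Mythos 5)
Your overall skeleton --- induction along the chain $\mathcal{P}_1 \subseteq \cdots \subseteq \mathcal{P}_k$, a base case on $\sigma_1$ via subwords of the $P$-periodic carrier word (with the conjugator being a prefix $w = U[\alpha(\overline\mu),\alpha(\lambda)] \in G$), and reconciliation of competing periods through Lemma \ref{double shift} --- coincides with the paper's proof. But your inductive step has a genuine gap. When $\sigma_j$ is adjoined because $\overline\lambda \in \sigma_j$ for some $\lambda$ already in the structure, the inherited periodic word $X_{\overline\lambda} = X_\lambda^{\pm 1}$ covers only the portion of $\sigma_j$ lying under $\overline\lambda$. Your claim that ``the remaining long items and bases of $\sigma_j$ are reached by the same subword/overlap argument as in the base case'' does not go through: those items need not be subwords of $X_{\overline\lambda}$, and Lemma \ref{product} cannot transport periodicity across parts of the section on which no periodic word yet lives. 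In the base case the short-shift hypothesis supplied a periodic word covering the whole section; in $\sigma_j$ your argument produces no analogue.

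The missing ingredients are exactly what the paper's inductive step uses. First, $\Omega'$ was constructed so that \emph{every} closed section $\sigma_j$ carries its own overlapping pair $(\mu_j, \overline\mu_j)$, making the whole section $Q$-periodic for $Q = U[\alpha(\mu_j), \alpha(\overline\mu_j)]$ --- this, not $X_{\overline\lambda}$, is what covers the remaining long items. Second, $\sigma_1$ was chosen to maximize $|[\alpha(\mu_i),\alpha(\overline\mu_i)]|$ over all sections, giving $|P| \geqslant |Q|$; you never invoke this maximality, yet without it the long items of $\sigma_j$ (of length $\geqslant 2|P|$) could be short relative to $Q$ and the transfer would fail. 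Third, these two facts are combined precisely on the doubly periodic base $\overline\lambda$: it is $Q'$-periodic (from $\sigma_j$'s own overlapping pair) and $Q_\lambda$-periodic (inherited, with $|Q_\lambda| = |P|$ and $Q_\lambda$ conjugate to $P$ by induction), so Lemma \ref{double shift} gives $[Q', Q_\lambda] = 1$, hence $[Q, g^{-1} \ast Q_\lambda \ast g] = 1$; then, since $|P| \geqslant |Q|$ and $|\mu_j| \geqslant |\lambda|$, the carrier $\mu_j$ itself is $(g^{-1} \ast Q_\lambda \ast g)$-periodic, i.e.\ the \emph{entire section} acquires a period of length exactly $|P|$ conjugate to $P$ in $G$. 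Only after this upgrade do the remaining long items and bases of $\sigma_j$ follow by the base-case subword argument. Your sketch instead applies Lemma \ref{double shift} item by item against ``inherited'' periods that do not actually extend over those items, so as written the induction does not close.
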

\begin{proof} It is enough to prove the statement for bases - the argument for items is similar. We
are going to use the induction on $k$, where $k$ is the number of sections in ${\mathcal P}$

\smallskip

By our assumption (short shift) we have $\mu \in {\mathcal P}$. It follows that any base $\lambda
\in \sigma_1$, such that $|\lambda| \geqslant 2|P|$, is $Q_\lambda$-periodic, where $|Q_\lambda| =
|P|$. Moreover, if $w = U[\alpha(\overline\mu),\alpha\lambda]$ then $w \in G$ and $w^{-1} \ast P
\ast w = Q_\lambda$. Hence, all bases from ${\mathcal P_1}$ have the properties stated in the lemma.

\smallskip

Suppose we have the statement of the lemma for all bases from ${\mathcal P_{k-1}}$. By definition,
${\mathcal P_k}$ is obtained from ${\mathcal P_{k-1}}$ by adding a section $\sigma_k$, such that
there exists $\lambda \in {\mathcal P_{k-1}}$ and $\overline\lambda \in \sigma_k$, as well as all
appropriate items and bases of $\sigma_k$. Observe that since $\mu_k$ is $Q$-periodic, where $Q =
U[\alpha(\mu_k), \alpha(\overline\mu_k)]$, it follows that $\overline\lambda$ is $Q'$-periodic,
where $|Q'| = |Q|$ and $Q'$ is conjugate to $Q$ by means of $g \in G$. At the same time, by the
induction hypothesis, $\lambda$ is $Q_\lambda$-periodic, where $|Q_\lambda| = |P|$. Hence, by Lemma
\ref{double shift} we have $[Q', Q_\lambda] = 1$ and it follows that $[Q, g^{-1} \ast
Q_\lambda \ast g] = 1$, where $|g^{-1} \ast Q_\lambda \ast g| = |P|$. Now, since $|P| \geqslant
|Q|$ and $|\mu_k| \geqslant |\lambda|$ it follows that $\mu_k$ is $g^{-1} \ast Q_\lambda \ast g$-periodic.
The statement of the lemma for all bases in $\sigma_k$ of the length at least $2 |P|$ follows
automatically now.

\end{proof}

\begin{remark}
\label{rem:period}
From the construction of ${\mathcal P}(U, P)$ and Lemma \ref{periodicity} it follows that if
$\sigma_i \in {\mathcal P}$ with an overlapping pair $(\mu_i,\overline\mu_i)$ and if $l \in [\alpha
(\sigma_i),\beta(\sigma_i)]$ then there exists a period $Q_l$ conjugate in $G$ either to $P$ or
$P^{-1}$ such that
\begin{enumerate}
\item[(i)] if $ht([\alpha(\sigma_i),l]) > ht(P)$ then $U[\alpha(\sigma_i),l]$ has any natural power
of $Q_l$ as a terminal subword,
\item[(ii)] if $ht([l,\beta(\sigma_i)]) > ht(P)$ then $U[1,\beta(\sigma_i)]$ has any natural power
of $Q_l$ as an initial subword,
\item[(iii)] if $ht([\alpha(\sigma_i),l]) \leqslant ht(P)$ then $U[\alpha(\sigma_i),l] = Q'' \circ
Q_l^{k_l}$, where $Q_l = Q' \circ Q''$,
\item[(iv)] if $ht([l,\beta(\sigma_i)]) \leqslant ht(P)$ then $U[1,\beta(\sigma_i)] = Q_l^{k_l} \circ
Q'$, where $Q_l = Q' \circ Q''$.
\end{enumerate}
\end{remark}

\begin{lemma}
\label{le:PP}
${\mathcal P}(U, P)$ corresponding to a solution $U$ of $\Omega'$ is a periodic structure on
$\Omega'$.
\end{lemma}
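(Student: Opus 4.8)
The plan is to verify, one by one, the four closure conditions (a)--(d) that define the set $\mathcal{P}$ of a periodic structure, and then to observe that $R$ is automatically an equivalence relation. Conditions (a), (b) and (c) require essentially no work: they are built into the inductive construction of $\mathcal{P}(U,P)$ as the union of the chain $\mathcal{P}_1 \subseteq \cdots \subseteq \mathcal{P}_k$. Indeed, whenever a $Q$-periodic item $h$ (with $|Q| = |P|$) is placed into $\mathcal{P}$ we simultaneously adjoin every base containing $h$, which gives (a); each base is adjoined together with its dual, which gives (b); and a base is only ever added as part of a closed section $\sigma_j$, which gives (c). So the real content of the lemma lies in producing the orientation function $\mathcal{X}$ of condition (d).

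For condition (d) I would define $\mathcal{X}$ by induction along the construction chain. Set $\mathcal{X}(\sigma_1) = +1$ for the initial section. When a new section $\sigma_j$ is adjoined because some base $\lambda \in \sigma_i$ (with $\sigma_i$ already in $\mathcal{P}$) has its dual $\overline\lambda \in \sigma_j$, put
$$\mathcal{X}(\sigma_j) := \mathcal{X}(\sigma_i)\,\varepsilon(\lambda)\,\varepsilon(\overline\lambda).$$
With this definition condition (d) holds for the particular edge $(\lambda,\sigma_i,\sigma_j)$ by fiat; the genuine issue is that $\sigma_j$ may be reachable through several different bases, so one must check that all of them prescribe the same sign, i.e.\ that $\mathcal{X}$ is a well-defined $\{-1,+1\}$-valued cocycle on the graph whose vertices are the long sections and whose edges are the pairs $(\lambda,\overline\lambda)$ joining two long sections.

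The tool for this well-definedness is the periodicity already established in Lemma~\ref{periodicity} and Remark~\ref{rem:period}. For each long section $\sigma$ the word $U[\alpha(\sigma),\beta(\sigma)]$ reads as a power of a fixed period $Q_\sigma$ with $|Q_\sigma| = |P|$ and $Q_\sigma$ conjugate to $P$ or to $P^{-1}$; whether it is a conjugate of $P$ or of $P^{-1}$ is exactly the datum recorded by $\mathcal{X}(\sigma)$. For a base $\lambda$ with $\lambda \in \sigma_1$ and $\overline\lambda \in \sigma_2$, the basic equation forces $U_\lambda = U_{\overline\lambda}^{\varepsilon(\lambda)\varepsilon(\overline\lambda)}$, and comparing the two period directions through Lemma~\ref{double shift} (two periods of equal height sharing a long common power commute, so the direction of the period is an invariant of the section) yields precisely $\varepsilon(\lambda)\varepsilon(\overline\lambda) = \mathcal{X}(\sigma_1)\mathcal{X}(\sigma_2)$. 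Traversing any closed loop in the section graph therefore multiplies the signs back to $+1$, which is exactly the consistency needed to make the inductive definition of $\mathcal{X}$ independent of the chosen path.

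Finally, $R$ is defined as the relation generated by the identifications $\alpha(\lambda) \sim_R \alpha(\overline\lambda)$, $\beta(\lambda) \sim_R \beta(\overline\lambda)$ (or the reversed pair when $\varepsilon(\lambda) = -\varepsilon(\overline\lambda)$) over all $\lambda \in \mathcal{P}$; as the reflexive--symmetric--transitive closure of a set of pairs it is automatically an equivalence relation, and the left/right splitting of a boundary lying in two long sections is consistent because the same sign bookkeeping of $\mathcal{X}$ dictates which endpoint of $\overline\lambda$ is glued to which endpoint of $\lambda$. I expect the cocycle well-definedness of $\mathcal{X}$ to be the only genuine obstacle; everything else is unwinding the construction and invoking the periodicity lemmas already in hand.
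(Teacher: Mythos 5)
Your proof is correct, and for the set ${\mathcal P}$ and the sign function ${\mathcal X}$ it follows essentially the paper's route: conditions 1(a)--(c) are absorbed into the construction, and ${\mathcal X}$ is produced with ${\mathcal X}(\sigma_1)=1$ and propagation by $\varepsilon(\lambda)\varepsilon(\overline\lambda)$. The one place you genuinely diverge is the relation $R$. The paper does not take the generated equivalence closure; it defines $l_1 \sim_R l_2$ if and only if $Q_{l_1} = Q_{l_2}$, where $Q_l$ is the period attached to the boundary $l$ by Remark~\ref{rem:period} --- an equivalence relation by fiat, and the coarsest natural choice. Your $R$, the reflexive--symmetric--transitive closure of the listed pairs, is the finest admissible choice and satisfies the formal definition equally trivially; moreover it refines the paper's relation, since each generating pair identifies boundaries carrying the same period (both periods are read off the common word $U_\lambda = U_{\overline\lambda}^{\pm 1}$), so a well-defined period $Q_{(k)}$ per class --- which is what Lemmas~\ref{paths} and~\ref{cycles} actually consume downstream --- survives under your choice as well, at the cost of this one extra observation that the paper's definition builds in for free. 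On well-definedness of ${\mathcal X}$: the paper checks only the case of two bases joining $\sigma_1$ directly to $\sigma_2$, reducing a sign conflict to the existence of an element conjugating $P$ into $P^{-1}$, which is impossible; your cocycle (path-independence) formulation via Lemma~\ref{double shift} handles the general multi-path situation more explicitly and is a slight improvement in completeness. Note, though, that commutativity of equal-length periods alone does not close the argument --- one still needs, exactly as the paper does, that no group element conjugates $P$ to $P^{-1}$ (this follows from the CSA property and torsion-freeness, as in the proof of Lemma~\ref{cycles}), and you leave this step at the same level of implicitness as the paper itself.
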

\begin{proof} Parts 1(a), 1(b) and 1(c) of the definition of periodic structure hold by construction.

\smallskip

Set ${\mathcal X}(\sigma_1) = 1$. If there is a base $\lambda \in \sigma_1$ such that $\overline
\lambda \in \sigma_2$ then we set ${\mathcal X}(\sigma_2) = \varepsilon(\lambda) \varepsilon
(\overline\lambda)$. Observe that the sign of ${\mathcal X}(\sigma_2)$ is well-defined since if
there exists another base $\lambda_1 \in \sigma_1$ such that $\overline\lambda_1 \in \sigma_2$ and
$\varepsilon(\lambda) \varepsilon(\overline\lambda) \neq \varepsilon(\lambda_1) \varepsilon(\overline
\lambda_1)$ then it follows that there exists a group element conjugating $P$ into $P^{-1}$, which is
impossible. Hence 1(d) holds.

\smallskip

By Remark \ref{rem:period} if two boundaries $l_1, l_2$ belong to sections from ${\mathcal P}$ then
there exist $Q_{l_1}, Q_{l_2}$ conjugate in $G$ either to $P$ or $P^{-1}$ such that some of the
conditions listed in the remark hold. Thus, we define a relation $R$ as follows: $l_1 \sim_R l_2$ if
and only if $Q_{l_1} = Q_{l_2}$, and part 2 follows immediately.

\end{proof}

Now let us fix a nonempty periodic structure $\langle {\mathcal P}, R \rangle$. Item (d) allows us
to assume (after replacing the variables $h_i, \ldots, h_{j-1}$ by $h_{j-1}^{-1}, \ldots, h_i^{-1}$
on those sections $[i,j] \in {\mathcal P}$ for which ${\mathcal X}([i,j]) = -1$) that $\varepsilon
(\mu) = 1$ for all $\mu \in {\mathcal P}$. For a boundary $k$, we will denote by $(k)$ the
equivalence class of the relation $R$ to which it belongs.

\smallskip

Let us construct an oriented graph $\Gamma$ whose set of vertices is the set of $R$-equivalence
classes. For each long $h_k$ we introduce an oriented edge $e$ leading from $(k)$ to $(k+1)$ and an
inverse edge $e^{-1}$ leading from $(k+1)$ to $(k)$. This edge $e$ is assigned the label $h(e) =
h_k$ (respectively, $h(e^{-1}) = h_k^{-1}$). For every path $r = e_1^{\pm 1} \cdots e_s^{\pm 1}$ in
the graph $\Gamma$, we denote by $h(r)$ its label $h(e_1^{\pm 1}) \cdots h(e_j^{\pm 1})$.

The periodic structure $\langle {\mathcal P}, R \rangle$ is called {\em connected}, if the graph
$\Gamma$ is connected. Suppose first that $\langle {\mathcal P}, R \rangle$ is connected. We can
also suppose that each boundary of $\Omega'$ is a boundary between two bases.

\begin{lemma}
\label{paths}
Let $U$ be a solution of $\Omega'$ and $\langle {\mathcal P}, R \rangle = {\mathcal P}(U, P)$. If
$p$ is a path in the graph $\Gamma$ from $(i)$ to $(j)$ then $U[p]^{-1} \ast Q_i \ast U[p] = Q_j$
in $G$.
\end{lemma}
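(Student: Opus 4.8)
The plan is to prove the identity by induction on the length $s$ of the path $p = e_1^{\pm 1} \cdots e_s^{\pm 1}$, reducing everything to the case of a single edge. Two facts make the induction go through. First, labels of paths multiply, so under the solution the value $U[p]$ is the corresponding product in $G$ of the item-values; hence if $p = p_1 p_2$ with $p_1$ running from $(i)$ to $(m)$ and $p_2$ from $(m)$ to $(j)$, then $U[p] = U[p_1] \ast U[p_2]$ in $G$. Second, conjugation is compatible with composition: from $U[p_1]^{-1} \ast Q_i \ast U[p_1] = Q_m$ and $U[p_2]^{-1} \ast Q_m \ast U[p_2] = Q_j$ one substitutes to get $U[p]^{-1} \ast Q_i \ast U[p] = Q_j$. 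The empty path gives the trivial identity $Q_i = Q_i$, which is consistent precisely because $R$ identifies two boundaries exactly when their periods coincide, so $Q_i$ depends only on the class $(i)$.

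Thus the whole content is the single-edge case: for a long item $h_k$ the edge $e$ from $(k)$ to $(k+1)$ with label $h_k$ should satisfy $U_k^{-1} \ast Q_{(k)} \ast U_k = Q_{(k+1)}$. First I would fix the closed section $\sigma_i$ containing $h_k$; both boundaries $k$ and $k+1$ lie in $\sigma_i$, since an edge of $\Gamma$ joins consecutive boundaries of a single item. By Lemma \ref{periodicity} the part of $U$ to the left of $k$ inside $\sigma_i$ is $Q_{(k)}$-periodic and the part to the left of $k+1$ is $Q_{(k+1)}$-periodic, with $|Q_{(k)}| = |Q_{(k+1)}| = |P|$ and both periods conjugate to $P$ in $G$. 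Using Remark \ref{rem:period} I would compare the prefixes $U[\alpha(\sigma_i), k]$ and $U[\alpha(\sigma_i), k+1] = U[\alpha(\sigma_i), k] \circ U_k$: crossing the long item $U_k$ cyclically shifts the terminal period, and since $U_k$ is itself periodic of the same period-length, $U_k$ differs from a power of $Q_{(k)}$ by a fractional shift, so conjugation by $U_k$ reduces to conjugation by that shift. Matching the two periodic descriptions via Lemma \ref{product} then forces $Q_{(k+1)} = U_k^{-1} \ast Q_{(k)} \ast U_k$ in $G$. The inverse edge $e^{-1}$ from $(k+1)$ to $(k)$ is immediate: conjugating this identity by $U_k$ gives $U_k \ast Q_{(k+1)} \ast U_k^{-1} = Q_{(k)}$, exactly the relation required for the label $h_k^{-1}$.

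The main obstacle will be this single-edge case in the non-Archimedean setting, where one must handle the two regimes of periodicity. When $ht([\alpha(\sigma_i), k]) > ht(P)$ the prefix genuinely contains unbounded powers of $Q_{(k)}$ and the conjugation can be read off directly; when the height drops to $ht(P)$ one has only a bounded power and must use the decomposition $Q_l = Q' \circ Q''$ of Remark \ref{rem:period}(iii)--(iv) to pin the period down exactly. A second delicate point is that all equalities are asserted \emph{in $G$}, not merely between $\Lambda$-words: the periods $Q_{(k)}, Q_{(k+1)}$ and the conjugator $U_k$ represent group elements, so Lemma \ref{double shift} is needed to guarantee that two periodic descriptions of the same subword force commutativity of the corresponding periods, and hence that the conjugacy holds at the level of $G$. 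Once the single-edge relation is established in both regimes, the induction over $s$ closes with no further difficulty.
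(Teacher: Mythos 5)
Your proposal is correct and follows essentially the same route as the paper: the paper's (one-line) proof writes $p$ as a product of long items $h_{i_1}\cdots h_{i_k}$, notes that the products of the corresponding $U_j$'s are defined, and invokes Lemma \ref{product}, which is exactly your single-edge case (a long item is $Q$-periodic with $|Q|=|P|$ by Lemma \ref{periodicity}, so by the definition of periodicity it conjugates the period at its left boundary to the period at its right one) composed along the path. Your edge-by-edge induction, the treatment of inverse edges, and the appeal to Remark \ref{rem:period} and Lemma \ref{double shift} just make explicit the details the paper leaves implicit.
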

\begin{proof} If $p = h_{i_1} \cdots h_{i_k}$ then all products between $U_j,\ j \in \{i_1, \ldots,
i_k\}$ are defined and the required result follows from Lemma \ref{product}.
\end{proof}

\begin{lemma}
\label{cycles}
Let $U$ be a solution of $\Omega'$ and $\langle {\mathcal P}, R \rangle = {\mathcal P}(U, P)$. If
$c_1, c_2$ are cycles in the graph $\Gamma$ at the vertex $v = (k)$ then $[U[c_1], U[c_2]] = 1
$ in $G$.
\end{lemma}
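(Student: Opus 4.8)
The plan is to reduce the commutation of the two cycle labels to the statement that each of them centralizes one and the same nontrivial element --- namely the period $Q_k$ attached to the base vertex $v=(k)$ --- and then to use that centralizers of nontrivial elements in a $\Lambda$-free group are abelian.

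First I would observe that a cycle $c_i$ based at $v=(k)$ is exactly a path in $\Gamma$ from $(k)$ to $(k)$. Applying Lemma~\ref{paths} with $i=j=k$ therefore gives $U[c_1]^{-1}\ast Q_k\ast U[c_1]=Q_k$ and $U[c_2]^{-1}\ast Q_k\ast U[c_2]=Q_k$ in $G$. Since $\ast$ restricts to the group multiplication of $G$ on elements of $G$, each of these identities says precisely that $U[c_i]$ commutes with $Q_k$, i.e. $U[c_1],U[c_2]\in C_G(Q_k)$. This is the only step in which the combinatorics of $\Gamma$ and the hypothesis that $c_i$ is a \emph{cycle} are used; everything afterwards is internal algebra of $G$. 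I would also record that $Q_k\neq 1$: by the construction of ${\mathcal P}(U,P)$ and by Lemma~\ref{periodicity}, $Q_k$ is conjugate in $G$ to $P=U[1,\alpha(\overline\mu)]$, which is a nonempty cyclically reduced word because $(\mu,\overline\mu)$ is an overlapping pair, so $|P|>0$. Hence $U[c_1]$ and $U[c_2]$ lie in the centralizer of a common nontrivial element.

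The heart of the matter, and the step I expect to be the main obstacle, is to pass from ``both centralize $Q_k$'' to ``they centralize each other.'' For this I would invoke the fact that $G$, being $\Lambda$-free (it acts freely on a $\Lambda$-tree, equivalently $G\hookrightarrow CDR(\Lambda,Z)$ with a free length function), is commutative transitive: the centralizer of any nontrivial element is abelian. Consequently $C_G(Q_k)$ is abelian and $[U[c_1],U[c_2]]=1$, as required.

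If one prefers to keep the argument inside the infinite-word calculus rather than citing commutative transitivity as an external property, the same conclusion should be reachable from the periodicity machinery already set up. Indeed, an element commuting with the cyclically reduced word $Q_k$ is, after passing to its cyclic decomposition, $Q_k$-periodic up to conjugacy; two words whose associated periods both commute with $Q_k$ then have commuting periods by Lemma~\ref{double shift}, which confines $U[c_1]$ and $U[c_2]$ to a single abelian subgroup determined by the common root of $Q_k$. Either route completes the proof, the only genuinely nontrivial input being the abelianness of centralizers in a $\Lambda$-free group.
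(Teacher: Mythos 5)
Your proof is correct and takes essentially the same approach as the paper: both arguments use Lemma~\ref{paths} to show that $U[c_1]$ and $U[c_2]$ centralize a common nontrivial period and then conclude via the CSA-property (commutative transitivity) of the $\Lambda$-free group $G$. The only cosmetic difference is that you apply Lemma~\ref{paths} directly at $(k)$ with $i=j=k$ and period $Q_k$, whereas the paper first settles the base case $(k)=(1)$ with period $P$ and then transfers to general $(k)$ by conjugating along a path to the base vertex --- a harmless streamlining on your part.
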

\begin{proof} Assume at first that $(k) = (1)$. There exists a loop $c$ at $(1)$ such that $U[c] =
P$. By Lemma \ref{paths} $U[p]^{-1} \ast P \ast U[p] = P$ in $G$ for every path $p$ from $(1)$ to
$(1)$, so $[U[c_1], P] = [U[c_2], P] = 1$ and from the CSA-property of $G$ the required follows.

If $(k) \neq (1)$ then, since $\Gamma$ is connected it follows that there exists a path $p$ from $v$
to $v_0$. Moreover, by Lemma \ref{paths} we have $U[p]^{-1} \ast Q_k \ast U[p] = P$ in $G$. So, by
the CSA-property commutativity of cycles at $(k)$ follows from commutativity of cycles at $(1)$
which is already proved.

\end{proof}

$\Omega'$ is called {\em periodized} with respect to a given periodic structure $\langle {\mathcal
P}, R \rangle$ if for every two cycles $c_1$ and $c_2$ with the same initial vertex in the graph
$\Gamma$ there is a relation $[h(c_1), h(c_2)] = 1$ in $G_{\Omega'}$. Observe that according to Lemma
\ref{cycles} $\Omega'$ is periodized with respect to ${\mathcal P}(U, P)$ for any solution $U$.

\smallskip

Let $\Gamma_0$ be the subgraph of the graph $\Gamma$ having the same set of vertices and consisting
of the edges $e$ whose labels do not belong to ${\mathcal P}$. Choose a maximal sub-forest $T_0$ in
the graph $\Gamma_0$ and extend it to a maximal sub-forest $T$ of the graph $\Gamma$. Since $\langle
{\mathcal P}, R \rangle$ is connected by assumption, it follows that $T$ is a tree. Let $v_0$ be an
arbitrary vertex of the graph $\Gamma$ and $r(v_0, v)$ the (unique) path from $v_0$ to $v$ all of
whose vertices belong to $T$. For every edge $e = (v, v')$ not lying in $T$, we introduce a cycle
$c_e = r(v_0, v) e (r(v_0, v'))^{-1}$. Then the fundamental group $\pi_1(\Gamma, v_0)$ is generated
by the cycles $c_e$ (see, for example, the proof of Proposition 3.2.1 \cite{LS}).

Furthermore, the set of elements
\begin{equation}
\label{2.52}
\{h(e) \mid e \in T \} \cup \{h(c_e) \mid e \not \in T \}
\end{equation}
forms a basis of the free group with the set of generators $\{h_k \mid h_k \in \sigma\}$. If $\lambda
\in {\mathcal P}$, then $(\beta(\lambda)) = (\beta(\overline\lambda)),\ (\alpha(\lambda)) = (\alpha
(\overline\lambda))$ by the definition of $R$ and, consequently, the word
$$h[\alpha(\lambda), \beta(\lambda)]\ h[\alpha(\overline\lambda), \beta(\overline\lambda)]^{-1}$$
is the label of a cycle $c'(\lambda)$ from $\pi_1(\Gamma,(\alpha(\lambda)))$. Let $c(\lambda) =
r(v_0, (\alpha(\lambda)))\ c'(\lambda)\ r(v_0,(\alpha(\lambda)))^{-1}$. Then
\begin{equation}
\label{2.53}
h(c(\lambda)) = u\ h[\alpha(\lambda), \beta(\lambda)]\ h[\alpha(\overline\lambda), \beta(\overline
\lambda)]^{-1}\ u^{-1},
\end{equation}
where $u$ is a certain word. Since $c(\lambda) \in \pi_1(\Gamma, v_0)$, it follows that $c(\lambda)
= b_\lambda (\{c_e \mid e \not \in T \})$, where $b_\lambda$ is a certain word in the indicated
generators which can be effectively constructed (see Proposition 3.2.1 \cite{LS}).

Let $\tilde{b}_\lambda$ denote the image of the word $b_\lambda$ in the abelianization of $\pi(\Gamma,
v_0)$. Denote by $\widetilde{Z}$ the free abelian group consisting of formal linear combinations
$\sum_{e \not \in T} n_e \tilde{c}_e\ \ (n_e \in {\mathbb Z})$, and by $\widetilde{B}$ its subgroup
generated by the elements $\tilde{b}_\lambda\ \ (\lambda \in {\mathcal P})$ and the elements
$\tilde{c}_e\ \ (e \not \in T, \ h(e) \not \in {\mathcal P})$. Let $\widetilde{A} = \widetilde{Z} /
\widetilde{B},\ T(\widetilde{A})$ the torsion subgroups of the group $\widetilde{A}$, and
$\widetilde{Z}_1$ the preimage of $T(\widetilde{A})$ in $\widetilde{Z}$. The group $\widetilde{Z} /
\widetilde{Z}_1$ is free, therefore, there exists a decomposition of the form
\begin{equation}
\label{2.54}
\widetilde{Z} = \widetilde{Z}_1 \oplus \widetilde{Z}_2,\ \widetilde{B} \subseteq \widetilde{Z}_1,\
|\widetilde{Z}_1 : \widetilde{B}| < \infty.
\end{equation}
Note that it is possible to express effectively a certain basis $\widetilde{\overline{c}}^{(1)},\
\widetilde{\overline{c}}^{(2)}$ of the group $\widetilde{Z}$ in terms of the generators
$\widetilde{c}_e$ so that for the subgroups $\widetilde{Z}_1$ and $\widetilde{Z}_2$ generated
respectively by $\widetilde{\overline{c}}^{(1)}$ and $\widetilde{\overline{c}}^{(2)}$, the relation
(\ref{2.54}) holds. It suffices, for instance, to look through the bases one by one, using the fact
that under the condition $\widetilde{Z} = \widetilde{Z}_1 \oplus \widetilde{Z}_2$ the relations
$\widetilde{B} \subseteq \widetilde{Z}_1,\ |\widetilde{Z}_1 : \widetilde{B}| < \infty$ hold if and
only if the generators of the groups $\widetilde{B}$ and $\widetilde{Z}_1$ generate the same linear
subspace over ${\bf Q}$, and the latter is easily verified algorithmically. Notice, that a more
economical algorithm can be constructed by analyzing the proof of the classification theorem for
finitely generated abelian groups. By Proposition 1.4.4  \cite{LS}, one can effectively construct a
basis $\overline{c}^{(1)}$, $\overline{c}^{(2)}$ of the free (non-abelian) group $\pi_1(\Gamma,
v_0)$ so that $\widetilde{\overline{c}}^{(1)}$, $\widetilde{\overline{c}}^{(2)}$ are the natural
images of the elements $\overline{c}^{(1)}$, $\overline{c}^{(2)}$ in $\widetilde{Z}$.

Now assume that $\langle {\mathcal P}, R \rangle$ is an arbitrary periodic structure on $\Omega'$,
not necessarily connected. Let $\Gamma_1, \ldots, \Gamma_r$ be the connected components of the graph
$\Gamma$. The labels of edges of the component $\Gamma_i$ form in $\Omega'$ a union of closed sections
from ${\mathcal P}$. Moreover, if a base $\lambda \in {\mathcal P}$ belongs to such a section, then
its dual $\overline\lambda$ also does by definition of periodic structure. Therefore, by taking for
${\mathcal P}_i$ the set of labels of edges from $\Gamma_i$ belonging to ${\mathcal P}$, sections
to which these labels belong, and bases $\lambda \in {\mathcal P}$ belonging to these sections, and
restricting in the corresponding way the relation $R$, we obtain a periodic connected structure
$\langle {\mathcal P}_i, R_i \rangle$ with the graph $\Gamma_i$.

The notation $\langle {\mathcal P}', R' \rangle \subseteq \langle {\mathcal P}, R \rangle$ means
that ${\mathcal P}' \subseteq {\mathcal P}$ and the relation $R'$ is a restriction of the relation
$R$. In particular, $\langle {\mathcal P}_i, R_i \rangle \subseteq \langle {\mathcal P}, R \rangle$
in the situation described in the previous paragraph. Since $\Omega'$ is periodized, the periodic
structure must be connected.

Let $e_1, \ldots, e_m$ be all the edges of the graph $\Gamma$ from $T - T_0$. Since $T_0$ is the
spanning forest of the graph $\Gamma_0$, it follows that $h(e_1), \ldots, h(e_m) \in {\mathcal P}$.
Let $F(\Omega')$ be a free group generated by the variables of $\Omega'$. Consider in the group
$F(\Omega')$ a new basis
$$Y = \{\overline t,\ \{h(e) \mid e \in T\},\ h(\overline c^{(1)}),\ h(\overline c^{(2)})\},$$
where variables $\bar t$ do not belong to the closed sections from $\mathcal P$. Let $v_i$ be the
initial vertex of the edge $e_i$. We introduce new variables $\overline u^{(i)} = \{u_{ie} \mid e
\notin T,\ e \notin {\mathcal P}\},\ \overline z^{(i)} = \{z_{ie} \mid e \notin T,\ e \notin
{\mathcal P}\}$ for $1 \leqslant i \leqslant m$, as follows
\begin{equation}
\label{2.59}
u_{ie} = h(r(v_0,v_i))^{-1}\ h(c_e)\ h(r(v_0,v_i)),
\end{equation}
\begin{equation}
\label{2.60}
h(e_i)^{-1}\ u_{ie}\ h(e_i) = z_{ie}.
\end{equation}

Without loss of generality we can assume $v_0 = (1)$.

\begin{lemma}
\label{2.10''}
Let $\Omega'$ be periodized with respect to a periodic structure $\langle {\mathcal P}, R \rangle$.
Let $K$ be the subgroup of $G_{\Omega'}$ generated by
$$Y_0 = \{ \overline t,\ \{h(e) \mid e \in T_0\},\ h(\overline c^{(1)}),\ \overline u^{(i)},
\overline z^{(i)}, i \in [1,m]\}.$$
If $|\overline c^{(2)}| = s \geqslant 1$ then the group $G_{\Omega'}$ splits as a fundamental group
of a graph of groups with two vertices, where one vertex group is $K$ and the other is a free abelian
group generated by $h(\overline c^{(2)}),\ h(\overline c^{(1)})$. The corresponding edge group is
generated by $h(\overline c^{(1)})$. The other edges are loops at the vertex with vertex group $K$,
have stable letters $h(e_i),\ i \in [1,m]$, and associated subgroups $\langle \overline u^{i}
\rangle,\ \langle \overline z^{i} \rangle$. If $\overline c^{(2)} = \emptyset$ then there is no
vertex with abelian vertex group.
\end{lemma}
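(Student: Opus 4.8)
The plan is to work directly inside $G_{\Omega'}$, exploiting that the periodization hypothesis already supplies the commutation relations among cycle labels, and to read off the claimed graph of groups by rewriting the presentation in the basis $Y$. First I would use that $Y = \{\overline t,\ \{h(e)\mid e\in T\},\ h(\overline c^{(1)}),\ h(\overline c^{(2)})\}$ is a free basis of $F(\Omega')$, as recorded in the construction preceding (\ref{2.59}), so that a sequence of Tietze transformations rewrites $G_{\Omega'}$ with generating set $Y$ together with the auxiliary variables $\overline u^{(i)}, \overline z^{(i)}$ introduced by (\ref{2.59})--(\ref{2.60}). I would then sort the defining relations into three groups: (i) the basic equations of the long bases $\lambda\in{\mathcal P}$, which by (\ref{2.53}) become the relations $b_\lambda(\{c_e\mid e\notin T\}) = 1$ involving only cycle labels; (ii) the periodization commutators, which by Lemma \ref{cycles} force all $h(c_e)$ (cycles based at $v_0$) to commute; and (iii) all remaining relations --- those coming from short bases, from bases outside ${\mathcal P}$, and the boundary equations --- which by construction are expressible in terms of $\overline t$, the short tree edges $h(e)$ ($e\in T_0$), and conjugates of cycle labels, hence lie in the subgroup $K$ generated by $Y_0$.

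Next I would extract the abelian vertex. By Lemma \ref{cycles} the labels $\{h(c_e)\mid e\notin T\}$ generate an abelian subgroup of $G_{\Omega'}$, and the only relations among them are the images of the $b_\lambda$; passing to the abelianization $\widetilde Z$ of $\pi_1(\Gamma,v_0)$, these relations span exactly $\widetilde B$. The decomposition (\ref{2.54}), namely $\widetilde Z = \widetilde Z_1\oplus \widetilde Z_2$ with $\widetilde B\subseteq \widetilde Z_1$ of finite index, is precisely the device that isolates a free abelian complement: since $\widetilde B$ meets the $\widetilde Z_2$-direction trivially, the elements $h(\overline c^{(2)})$ satisfy no relation pushing them into $K$, so $A = \langle h(\overline c^{(1)}),\, h(\overline c^{(2)})\rangle$ is free abelian and its interaction with the rest of the group is carried by $\langle h(\overline c^{(1)})\rangle$ alone. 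When $\overline c^{(2)} = \emptyset$ one has $\widetilde Z_2 = 0$ and no such complement appears, which is the degenerate alternative in the statement.

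With these pieces in hand I would assemble the graph of groups in two stages. First, the long tree edges $e_1,\dots,e_m$ become stable letters: relations (\ref{2.59})--(\ref{2.60}) say exactly that $h(e_i)$ conjugates $\langle \overline u^{(i)}\rangle$ onto $\langle \overline z^{(i)}\rangle$, and both associated subgroups are conjugates of cycle subgroups, hence abelian subgroups of $K$; peeling these off realizes the loops at the $K$-vertex as HNN-extensions with the stated associated subgroups. What remains is the amalgamated product of $K$ and the free abelian group $A$ along $\langle h(\overline c^{(1)})\rangle$, which is the single edge joining the two vertices. Matching this presentation against the standard graph-of-groups presentation then completes the argument.

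The main obstacle I expect is the verification in the second stage that $A\cap K = \langle h(\overline c^{(1)})\rangle$ exactly, i.e. that $h(\overline c^{(2)})$ contributes a bona fide free abelian factor with no hidden relation tying it to $K$; this is where the finite-index direct-sum splitting (\ref{2.54}) and the careful lift of the bases $\overline c^{(1)},\overline c^{(2)}$ from $\widetilde Z_1,\widetilde Z_2$ to $\pi_1(\Gamma,v_0)$ must be used to guarantee that the rewriting introduces no relations beyond those already accounted for. A secondary technical point is confirming that $\langle \overline u^{(i)}\rangle$ and $\langle \overline z^{(i)}\rangle$ are correctly identified and genuinely isomorphic under conjugation by $h(e_i)$, so that the HNN-stages are honest; here the CSA-property of $G$, together with Lemma \ref{paths}, is what guarantees these cycle subgroups are abelian and that the splitting is nondegenerate.
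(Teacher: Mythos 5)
Your plan follows the same route as the paper's proof --- rewrite $G_{\Omega'}$ in the basis $Y$, use periodization to make cycle labels commute, turn the edges $e_1,\dots,e_m$ into stable letters with associated subgroups $\langle \overline u^{(i)}\rangle,\ \langle \overline z^{(i)}\rangle$, and attach the abelian vertex along $\langle h(\overline c^{(1)})\rangle$ via the decomposition (\ref{2.54}) --- but it has a genuine gap at exactly the step you wave through ``by construction.'' Your item (iii) claims the remaining relations (short bases, bases outside ${\mathcal P}$, boundary equations) are expressible in $\overline t$, the edges of $T_0$, and conjugates of cycle labels, ``hence lie in the subgroup $K$.'' That is false as stated: a short letter $h_k = h(e)$ with $e \notin T$ rewrites as $h_k = h(s)\ h(r_2)^{-1}\ h(c_e)\ h(r_2)$, where $r_2$ is a path in the tree $T$ which in general \emph{does} traverse the long edges $e_i \in T - T_0$; so the rewritten short-base relations genuinely contain the letters $h(e_i)$, which are not in $Y_0$. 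The core of the paper's proof is precisely the combinatorial verification you omit: since $e \in \Gamma_0$, the endpoints $(k)$ and $(k+1)$ lie in one connected component of $\Gamma_0$ and are joined by a path $s$ in the forest $T_0$, and uniqueness of reduced paths in $T$ forces $r_1 = s\, r_2^{-1}$; consequently each $h(e_i)$ occurs in the rewritten $h_k$ at most once, with sign determined solely by the orientation of $e_i$ relative to the root $v_0$, so that in every relation the occurrences of $h(e_i)$ pair up into blocks $h(e_i)^{\mp 1}\ h(c)\ h(e_i)^{\pm 1}$ with $c$ a cycle based at an endpoint of $e_i$. Only after this analysis can the substitutions (\ref{2.59})--(\ref{2.60}) eliminate $h(e_i)$ from the residual system $\overline\psi(Y_0) = 1$, and only then does the presentation acquire the HNN shape; if a single relation contained $h(e_i)$ in a non-conjugation pattern, or with inconsistent signs not bracketing a cycle label, your ``peeling off'' of HNN-extensions would fail.

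Two smaller corrections. First, your worry about $A \cap K = \langle h(\overline c^{(1)})\rangle$ is resolved at the level of presentations, not of subgroups of an ambient group: by (\ref{2.53}) the long-base relations become words $b_\lambda$ in the cycles, hence --- modulo the commutation relations and the choice of basis realizing (\ref{2.54}) --- words in $h(\overline c^{(1)})$ alone, so $\overline\psi$ contains neither $h(e_i)$ nor $h(\overline c^{(2)})$, and the final presentation is literally that of the claimed graph of groups (an HNN-tower over $K$ followed by an extension of the centralizer of $\langle h(\overline c^{(1)})\rangle$ by the free abelian group on $h(\overline c^{(2)})$). Second, your appeal to the CSA property of $G$ is misplaced in this lemma: commutativity of the $u_{ie}$ (and of all cycle labels at a common vertex) is immediate from the hypothesis that $\Omega'$ is \emph{periodized}; CSA and Lemma \ref{cycles} were consumed earlier, to show that a solution $U$ produces a periodized equation. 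The proof of the present lemma is a purely syntactic rewriting and needs no property of $G$ beyond the periodization hypothesis.
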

\begin{proof} We are going to study in more detail how the variables $h(e_i),\ i \in [1,m]$ can
participate in the equations from $\Omega'$ rewritten in the set of variables $Y$.

If $h_k$ does not lie on a closed section from ${\mathcal P}$, or $h_k \notin {\mathcal P}$, but $e
\in T$ (where $h(e) = h_k$), then $h_k$ belongs to the basis $Y$ and is distinct from each of
$h(e_1), \ldots, h(e_m)$. Now let $h(e) = h_k$, $h_k \notin {\mathcal P}$ and $e \notin T$. Then
$e = r_1 c_e r_2$, where $r_1, r_2$ are paths in $T$. Since $e \in \Gamma_0,\ h(c_e)$ belongs to
$\langle c^{(1)} \rangle$ modulo commutation of cycles. The vertices $(k)$ and $(k+1)$ lie in the
same connected component of the graph $\Gamma_0$ and, hence, they are connected by a path $s$ in
the forest $T_0$. Furthermore, $r_1$ and $s\ r_2^{-1}$ are paths in the tree $T$ connecting the
vertices $(k)$ and $v_0$. Consequently, $r_1 = s\ r_2^{-1}$. Thus, $e = s\ r_2^{-1}\ c_e\ r_2$ and
$h_k = h(s)\ h(r_2)^{-1}\ h(c_e)\ h(r_2)$. The variable $h(e_i),\ i \in [1,m]$ can occur in the
right-hand side of the expression obtained (written in the basis $Y$) only in $h(r_2)$ and at most
once. Moreover, the sign of this occurrence (if it exists) depends only on the orientation of the
edge $e_i$ with respect to the root $v_0$ of the tree $T$. If $r_2 = r_2'\ e_i^{\pm 1}\ r_2''$ then
all the occurrences of the variable $h(e_i)$ in the words $h_k$ written in the basis $Y$, with $h_k
\notin {\mathcal P}$, are contained in the occurrences of words of the form $h(e_i)^{\mp 1}\
h((r_2')^{-1}\ c_e\ r_2')\ h(e_i)^{\pm 1}$, that is, in occurrences of the form $h(e_i)^{\mp 1}\
h(c)\ h(e_i)^{\pm 1}$, where $c$ is a certain cycle of the graph $\Gamma$ starting at the initial
vertex of the edge $e_i^{\pm 1}$.

Therefore all the occurrences of $h(e_i),\ i \in [1,m]$ in the equations corresponding to $\lambda
\notin {\mathcal P}$ are of the form $h(e_i^{-1})\ h(c)\ h(e_i)$. Also, $h(e_i)$ does not occur in
the equations corresponding to $\lambda \in {\mathcal P}$ in the basis $Y$. The system $\Omega'$ is
equivalent to the following system in the variables $Y$:
$$u_{ie} = h(r(v_0,v_i))^{-1}\ h(c_e)\ h(r(v_0,v_i)),$$
$$h(e_i)^{-1}\ u_{ie}\ h(e_i) = z_{ie},$$
$$[u_{ie_1}, u_{ie_2}] = 1,$$
$$[h(c_1),h(c_2)] = 1, \ c_1,c_2 \in c^{(1)}, c^{(2)},$$
$$\overline\psi(Y_0) = 1,$$
where $\overline\psi(Y_0)$ does not contain $h(e_i),\ \bar c^{(2)}$. Let $K = G_{\overline\psi}$.
Then to obtain $G_\Omega$ we first take an HNN-extension of the group $K$ with abelian associated
subgroups generated by $\overline u^{(i)}$ and $\overline z^{(i)}$ and stable letters $h(e_i)$, and
then extend the centralizer of the image of $\langle \overline c^{(1)} \rangle$ by the free abelian
subgroup generated by the images of $\overline c^{(2)}$.

\end{proof}
Proposition \ref{PerSt} now follows from Lemma \ref{2.10''}.

\paragraph{(d) Abelian splitting: long shift.} If $\Omega$ does not satisfy the conditions of
(a)---(c), we perform QH-shortening, then apply the entire transformation and then, if possible, the
transformation (D7).

\begin{lemma}
\label{3.2}
Let
$$v_1 \rightarrow v_2 \rightarrow \cdots \rightarrow v_r \rightarrow \cdots$$
be an infinite path in $T(\Omega)$. Then there exists a natural number $N$ such that all the
generalized equations in vertices $v_n,\ n \geqslant N$ satisfy the general JSJ-case {(d)}.
\end{lemma}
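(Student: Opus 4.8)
The plan is to equip each vertex $v$ of the path with the lexicographically ordered pair
$$\chi(v) = \big(c(G_v),\ \tau(\Omega_v)\big),$$
where $\tau(\Omega_v)$ is the complexity of the generalized equation from Subsection \ref{subs:complexity} and $c(G_v)$ is the Delzant--Potyagailo complexity of the group $G_v$ determined by the active part of $\Omega_v$, that is, the free factor or weakly rigid subgroup with which the process is currently working. Both coordinates are non-negative integers, so $\chi$ takes values in a well-ordered set and can strictly decrease only finitely many times along the path. The lemma then reduces to two assertions: $\chi$ is non-increasing along every edge, and $\chi$ strictly decreases along every edge whose source is not in the general JSJ-case (d). Granting these, on an infinite path $\chi$ is eventually constant, say for $n \geq N$; for such $n$ no strict decrease occurs, so $v_n$ lies in none of the reducing cases, and since every vertex is treated under the linear, the quadratic/almost-quadratic, or the general JSJ case, with case (d) being the default applied exactly when (a)--(c) fail, each $v_n$ with $n \geq N$ must be in case (d).

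The first family of reducing edges are those at which the process detects a splitting of $G_v$ as the fundamental group of a graph of groups with at least two vertices: the regular (QH) sub-case of the quadratic and almost-quadratic cases and of the general JSJ-case (a), the free-product sub-cases of the linear case and of the general JSJ-case (b), and the abelian-vertex alternative of the general JSJ-case (c). In each of these the process continues with a generalized equation whose active part is a vertex group, or a proper non-free factor, $G_{v+1}$ of the detected decomposition; this group is either a proper free factor of $G_v$ or a vertex group of an abelian splitting to which Proposition \ref{DP} applies, so $c(G_{v+1}) < c(G_v)$ and the first coordinate of $\chi$ drops.

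The second family of reducing edges are those producing an HNN-extension with maximal abelian associated subgroups: the non-regular quadratic and almost-quadratic sub-cases and the HNN alternative of the general JSJ-case (c). Here $G_v$ is carried to the single vertex group of the splitting, so $c$ does not increase, while the relevant closed section is moved to the non-active part, strictly decreasing the number of bases in the active sections. Since $\tau(\Omega) = \sum_{\sigma \in A\Sigma_\Omega} \max\{0,\, n(\sigma)-2\}$ and, by the remark after the definition of $\tau$ in Subsection \ref{subs:complexity}, the entire transformation and Tietze cleaning (D7) never increase $\tau$, the second coordinate of $\chi$ strictly decreases. The linear case, when it does not already yield a free splitting, is of the same type: Tietze cleaning (D7) removes the lone base detected by $\gamma(h_i)=1$ (replacing $\Omega$ by its kernel if the linear elimination runs infinitely, by Lemma \ref{lin-el}) and pushes the resulting free section into the non-active part, again strictly dropping $\tau$ while leaving $c$ unchanged.

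Finally, on an edge in the general JSJ-case (d) none of the splittings of (a)--(c) is available, so $G_v$ is not decomposed and $c$ is unchanged; the step consists of a QH-shortening, which is an automorphism of $G_v$ and hence only changes the chosen solution while leaving $\Omega_v$ and $\tau$ intact, followed by the entire transformation and possibly (D7), neither of which increases $\tau$. Hence $\chi$ is unchanged on a (d)-edge, and both assertions hold. The main obstacle is the case-by-case bookkeeping that separates the at-least-two-vertex splittings, where the drop is in $c$ via Proposition \ref{DP}, from the HNN splittings, where the drop is only in $\tau$; in particular one must verify that passing to an HNN vertex group never raises $c$, so that the first coordinate of $\chi$ is genuinely monotone.
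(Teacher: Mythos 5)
Your proof is correct and takes essentially the same route as the paper: the paper's (much terser) argument likewise observes that case (a) and free-factor events strictly decrease the Delzant--Potyagailo complexity and so occur finitely often by Proposition \ref{DP}, while case (c)/HNN and Tietze-cleaning events strictly decrease $\tau$, so eventually only case (d) can occur. Your lexicographic pair $(c(G_v),\tau(\Omega_v))$ is just an explicit packaging of this bookkeeping, and the one issue you flag (that an HNN step must not raise $c$, and that (D1) can temporarily raise $\tau$ within an edge, so only the net change per edge matters) is left equally implicit in the paper's own proof.
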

\begin{proof} Indeed, the Tietze cleaning either replaces the group by its proper free factor or
decreases the complexity. Every time when the case (a) holds we replace $G$ by some vertex group in
a non-trivial abelian splitting of $G$. This can be done only finitely many times \cite{DP}. Every
time when case (c) takes place, we decrease the complexity.
\end{proof}

\begin{proposition}\label{(d)}
The general JSJ case (d) cannot be repeated infinitely many times.
\end{proposition}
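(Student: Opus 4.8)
The plan is to show that an infinite branch of $T(\Omega)$ staying in case (d) forever is impossible by exhibiting a strictly decreasing well-founded measure. First I would use that, as noted after the definition of complexity, neither the entire transformation nor the cleaning process (D7) raises $\tau$, so along any infinite path $v_1 \to v_2 \to \cdots$ the integer sequence $\tau(\Omega_{v_n})$ is eventually constant, say equal to $\tau_0$. Passing to this tail and invoking Lemma \ref{3.2}, I may assume every vertex is of type (d); in particular none of the splitting-producing cases (a)--(c) occurs, so the Delzant--Potyagailo complexity $c(G)$ does not drop and no QH or abelian vertex group is split off. Thus the only substantive operation performed at each step is the entire transformation applied to the (general, non-periodic) carrier base $\mu$, preceded by QH-shortening. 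Since the hypothesis of Proposition \ref{PerSt} fails here, $\mu$ does not overlap its dual $\overline\mu$ by more than half of $|\mu|$, which is precisely the long-shift situation.

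Next I would track the total length of the active part, $d_n = \sum_{h_i \in A\Sigma_{\Omega_{v_n}}} |h_i| \in \Lambda$. The QH-shortening guarantees that the length of the quadratic part is bounded by $f_1(\Omega)$ times the length of the non-quadratic part, so it suffices to control the latter. In the long-shift case the carrier $\mu$ is the maximal leading base, hence $|\mu|$ is comparable with $d_n$, and after transferring all transfer bases onto $\overline\mu$ the initial section deleted by (ET4) has length of the same height as $|\mu|$. Consequently $d_{n+1} = d_n - \delta_n$ with $ht(\delta_n) = ht(d_n)$ and $\delta_n > 0$. Since $\Lambda$ is discretely ordered, subtracting a positive element of the leading height strictly decreases the leading coefficient of $d_n$ in the quotient $\Lambda_i / \Lambda_{i-1} \cong \mathbb{Z}$, where $i = ht(d_n)$ is the height of the active part.

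Finally I would convert this into termination by well-founded induction on the height $i \in I_\Lambda$ of the active part, noting that only finitely many heights occur below $ht(d_1)$ because the items of $\Omega$ take values generated by the fixed finite data of the embedding $\xi$. As long as the height stays equal to $i$, the leading coefficients form a strictly decreasing sequence of positive integers and must terminate; when they reach the floor the height of the active part drops to some $j < i$, and the inductive hypothesis applies, so case (d) can be repeated only finitely many times. The main obstacle is exactly this last link: because $\Lambda$ need not be Archimedean or well-ordered, a mere strict decrease of $d_n$ is insufficient, and one must verify that the long-shift hypothesis forces the decrement $\delta_n$ to sit at the \emph{leading} height of the active part, ruling out an infinite cascade of ever-smaller decrements at descending heights. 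This is precisely where the periodic alternative, handled separately by the short-shift Proposition \ref{PerSt}, has been excluded, so that the entire transformation genuinely shortens the equation at top height rather than merely rotating a periodic word.
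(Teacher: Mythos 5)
Your argument has a genuine gap at its central step: the claim that in the long-shift case each application of the entire transformation deletes a piece $\delta_n$ with $ht(\delta_n) = ht(d_n)$. This is false. After all transfer bases are moved onto $\overline\mu$, the deleted initial section $[1,i+1]$ ends at the first boundary covered by a second base, and that base may be a non-leading base $\lambda$ with $\alpha(\lambda)>1$ and $\beta(\lambda)>\beta(\mu)$ whose initial boundary lies arbitrarily close to $1$ --- in particular at infinitesimal distance from $1$ relative to $|\mu|$. Excluding the short-shift hypothesis of Proposition \ref{PerSt} constrains only the overlap of $\mu$ with its own dual $\overline\mu$; it says nothing about the positions of the other bases, so a single step can remove an arbitrarily short (lower-height) piece even when no periodic structure is present. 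Once the per-step leading-height decrement fails, your leading-coefficient descent collapses, and this is precisely why the paper does not argue step-by-step. Instead it accumulates decrements over whole $\mu$-reducing subpaths: inequality (\ref{3.24}) shows that such a subpath decreases $|U_\omega|$ by at least $\frac{1}{2}|X_\mu^{(1)}|$; these subpaths are assembled into a \emph{prohibited} path (\ref{3.7}) in which every recurring carrier base occurs at least $4n(1+f_1(\Omega_{v_2}))$ times; the excess $\psi_\omega(U)$ is shown to be invariant under the entire transformation via (\ref{3.22}); and the resulting lower bound $\sum\delta_i \geqslant \psi_\omega f_2$ contradicts the upper bound $|U_\omega| \leqslant \psi_\omega f_2$ of (\ref{psi}), which comes from the QH-shortening estimate (\ref{quadr}). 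Since every infinite path in case (d) contains a prohibited subpath, the process terminates. None of this global counting machinery appears in your proposal, and without it the termination claim is unsupported.

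There are also two secondary errors in your well-founded induction. First, for a discretely ordered $\Lambda$ the quotient $\Lambda_i/\Lambda_{i-1}$ of consecutive convex subgroups need not be $\mathbb{Z}$: discreteness only provides a minimal positive element of $\Lambda$ itself, and higher quotients can be dense Archimedean groups (e.g.\ $\Lambda = \mathbb{Z}\oplus\mathbb{Q}$ with the right lexicographic order has quotient $\mathbb{Q}$), so your ``strictly decreasing sequence of positive integers'' need not terminate even if the decrements did sit at the leading height. Second, the claim that only finitely many heights occur is unjustified: the subwords of the finitely many $\Lambda$-words coming from the embedding $\xi$ can realize infinitely many heights, and $I_\Lambda$ need not be well-ordered from below, so the induction on the height of the active part is not well-founded. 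To your credit, your final paragraph correctly identifies the non-Archimedean cascade of ever-smaller decrements as the crux of the problem; but the proposed resolution --- that excluding the short-shift alternative forces top-height decrements --- does not hold, and the paper's excess-and-prohibited-path argument is what actually closes this gap.
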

\begin{proof}
Consider an infinite path
\begin{equation}
\label{3.6}
r = v_1 \rightarrow v_2 \rightarrow \cdots \rightarrow v_m \cdots
\end{equation}
We have $\tau_{v_i}' = \tau_v'$.

Denote by $\mu_i$ the carrier base of the equation $\Omega_{v_i}$. The path (\ref{3.6}) will be
called $\mu$-reducing if $\mu_1 = \mu$, and either $\mu_2$ does not overlap with its double or they
overlap but $|\mu|\leqslant 2|\alpha (\mu )-\alpha (\bar\mu )|$, and
 $\mu$ occurs in the sequence $\mu_1, \ldots, \mu_{m-1}$ at least twice.

The path (\ref{3.6}) is called {\em prohibited} if it can be represented in the form
\begin{equation}
\label{3.7}
r = r_1 s_1 \cdots r_l s_l r',
\end{equation}
such that for some sequence of bases $\eta_1, \ldots, \eta_l$ the following three properties hold:
\begin{enumerate}
\item[(1)] every base occurring at least once in the sequence $\mu_1, \ldots, \mu_{m-1}$ occurs at
least $4n(1+f_1(\Omega_{v_2}))$  times in the sequence $\eta_1,
\ldots, \eta_l$, where $n$ is the number of pairs of bases in equations $\Omega_{v_i}$,
\item[(2)] the path $r_i$ is $\eta_i$-reducing,
\item[(3)] every transfer base of some equation of path $r$ is a transfer base of some equation of
path $r'$.
\end{enumerate}
Every infinite path contains a prohibited subpath. Indeed, let $\omega$ be the set of all bases
occurring in the sequence $\mu_1, \ldots, \mu_m, \ldots$ infinitely many times, and $\widetilde\omega$
the set of all bases, that are transfer bases of infinitely many equations $\Omega_{v_i}$. If one
cuts out some finite part in the beginning of this infinite path, one can suppose that all the bases
in the sequence $\mu_1, \ldots, \mu_m, \ldots$ belong to $\omega$ and each base that is a transfer
base of at least one equation, belongs to $\widetilde\omega$. Such an infinite path for any $\mu \in
\omega$ contains infinitely many non-intersecting $\mu$-reducing finite subpaths. Hence, it is
possible to construct a subpath (\ref{3.7}) of this path satisfying the first two conditions in the
definition of a prohibited subpath. Making $r'$ longer one obtains a prohibited subpath.

Suppose we have a prohibited path (\ref{3.6}). As before, let $\mu_i$ denote the carrier base of
$\Omega_{v_i}$, and $\omega = \{\mu_1, \ldots, \mu_{m-1}\}$ and $\widetilde\omega$ denote the set of
such bases which are transfer bases for at least one equation in (\ref{3.6}). By $\omega_1$ denote
the set of such bases $\mu$ for which either $\mu$ or $\overline\mu$ belongs to $\omega \cup
\widetilde\omega$. By $\omega_2$ denote the set of all the other bases. Let
$$\alpha(\omega) = \min(\min_{\mu \in \omega_2} \alpha(\mu),j),$$
where $j$ is the boundary between active and non-active sections.

Let
$$X_{\mu} \circeq U[\alpha(\mu), \beta(\mu)].$$
If $(\Omega_{v_i}, U)$ corresponds to a vertex in the sequence (\ref{3.6}) then denote
\begin{equation}
\label{3.12}
|U_\omega| = \sum_{i = 1}^{\alpha(\omega)-1} |U_i|,
\end{equation}

\begin{equation}
\label{3.13}
\psi_{\omega}(U) = \sum_{\mu \in \omega_1} |X_{\mu}| - 2 |U|_{\omega}.
\end{equation}

We call $\psi_{\omega}(U)$ the {\em excess} of $\Omega$. Since the case (a) is not applicable,
$\psi_{\omega}(U) \geqslant 0$ and is comparable with the length of the section $[1, \alpha(\omega)]$.
Consider the quadratic part of $\widetilde\Omega_{v_1}$ which is situated to the left of
$\alpha(\omega)$. Let $A$ be the length of the quadratic part and $B$ be the length of the
non-quadratic part.

\begin{lemma}
Suppose there exists a function $f_1(\Omega_{v_1})$ such that we have an inequality
\begin{equation}
\label{quadr}
A \leqslant B f_1.
\end{equation}
Then there is a function $f_2(\Omega_{v_1})$ such that
\begin{equation}
\label{psi}
|U_{\omega}| \leqslant \psi_{\omega}(U) f_2.
\end{equation}
\end{lemma}
\begin{proof} $|U_{\omega}| = A + B \leqslant (1+f_1) B \leqslant (1+f_1) \psi_{\omega}(U)$, because
 $B \leqslant \psi_{\omega}(U)$.
\end{proof}

From the definition of the process it follows that all the words $U^{(i)}[1, \rho_i + 1]$ are the
ends of the word $U^{(1)}[1, \rho_1 + 1]$, that is
\begin{equation}
\label{3.20}
U^{(1)}[1, \rho_1 + 1] \doteq u_i U^{(i)}[1, \rho_i + 1].
\end{equation}
On the other hand any base $\mu \in \omega_2$ participates in these transformations neither as a
carrier base, nor as a transfer base. Hence, $U^{(1)}[\alpha(\omega), \rho_1 + 1]$ is the end of the
word $U^{(i)}[1, \rho_i + 1]$, that is,
\begin{equation}
\label{3.21}
U^{(i)}[1, \rho_i + 1] \doteq v_i U^{(1)}[\alpha(\omega), \rho_1 + 1].
\end{equation}
So we have
\begin{equation}
\label{3.22}
|U^{(i)}_\omega| - |U^{(i+1)}_\omega| = |v_i| - |v_{i+1}| = |u_{i+1}| - |u_i| = |X_{\mu_i}^{(i)}|
- |X_{\mu_i}^{(i+1)}|.
\end{equation}
In particular (\ref{3.13}), (\ref{3.22}) imply that $\psi_\omega(U^{(1)}) = \psi_\omega(U^{(2)}) =
\ldots = \psi_\omega(U^{(m)}) = \psi_\omega$. Denote the number (\ref{3.22}) by $\delta_i$.

Let the path (\ref{3.6}) be $\mu$-reducing. Estimate $d(U_m) = \sum_{i=1}^{m-1} \delta_i$ from below.
First notice that if $\mu_{i_1} = \mu_{i_2} = \mu$ for $i_1 < i_2$ and $\mu_i \neq \mu$ for $i_1 < i
< i_2$ then
\begin{equation}
\label{3.23}
\sum_{i = i_1}^{i_2 - 1} \delta_i \geqslant |U^{i_1 + 1}[1, \alpha(\overline\mu_{i_1 + 1})]|.
\end{equation}
Indeed, if $i_2 = i_1 + 1$ then $\delta_{i_1} = d(U^{(i_1)}[1, \alpha(\overline\mu)] =
d(U^{(i_1 + 1)}[1, \alpha(\overline\mu)]$. If $i_2 > i_1 + 1$ then $\mu_{i_1 + 1} \neq \mu$ and $\mu$
is a transfer base in the equation $\Omega_{v_{i_1 + 1}}$. Hence, $\delta_{i_1 + 1} + |U^{(i_1 + 2)}
[1, \alpha(\mu)]| = |U^{(i_1 + 1)}[1, \alpha(\mu_{i_1 + 1})]|$. Now (\ref{3.23}) follows from
$$\sum_{i = i_1 + 2}^{i_2 - 1} \delta_i \geqslant |U^{(i_1 + 2)}[1, \alpha(\mu)]|.$$
So, if the bases $\mu_2$ and $\overline \mu_2$ do not intersect in the equation $\Omega_{v_2}$ then
(\ref{3.23}) implies that
$$\sum_{i = 1}^{m - 1} \delta_i \geqslant |U^{(2)}[1, \alpha(\overline\mu_2)]| \geqslant
|X_{\mu_2}^{(2)}| \geqslant |X_\mu^{(2)}| = |X_\mu^{(1)}| - \delta_1,$$
which implies that
\begin{equation}
\label{3.24}
\sum_{i = 1}^{m - 1} \delta_i \geqslant \frac{1}{2}|X_\mu^{(1)}|.
\end{equation}
We obtain the same inequality if $\mu_2$ overlaps with $\overline\mu_2$ but $|\mu_2| \leqslant
2|\alpha(\mu_2) - \alpha(\overline\mu_2)|.$

Suppose now that the path (\ref{3.6}) is prohibited. Hence, it can be represented in the form
(\ref{3.7}). From  the definition (\ref{3.13}) we have $\sum_{\mu \in \omega_1} d(X_\mu^{(m)})
\geqslant \psi_\omega$. So, at least for one base $\mu \in \omega_1$ the inequality $d(X_\mu^{(m)})
\geqslant \frac{1}{2n} \psi_\omega$ holds. Because $X_\mu^{(m)} \doteq (X_{\overline\mu}^{(m)})^{\pm 1}$,
we can suppose that $\mu \in \omega \cup \widetilde{\omega}$. Let $m_1$ be the length of the path
$r_1 s_1 \cdots r_l s_l$ in (\ref{3.7}). If $\mu \in \widetilde{\omega}$ then by the third part of
the definition of a prohibited path there exists $m_1 \leqslant i \leqslant m$ such that $\mu$ is a
transfer base of $\Omega_{v_i}$. Hence, $d(X_{\mu_i}^{(m_1)}) \geqslant d(X_{\mu_i}^{(i)}) \geqslant
d(X_\mu^{(i)}) \geqslant d(X_\mu^{(m)}) \geqslant \frac{1}{2n} \psi_\omega$. If $\mu \in \omega$ then take
$\mu$ instead of $\mu_i$. We proved the existence of a base $\mu \in \omega$ such that
\begin{equation}
\label{3.27}
|X_\mu^{(m_1)}| \geqslant \frac{1}{2n} \psi_\omega.
\end{equation}
By the definition of a prohibited path, the inequality (\ref{3.24}) and the inequality $d(X_\mu^{(i)})
\geqslant d(X_\mu^{(m_1)})\ \ (1 \leqslant i \leqslant m_1)$, we obtain
\begin{equation}
\label{3.28}
\sum_{i = 1}^{m_1 - 1} \delta_i \geqslant \max\left\{\frac{1}{4n} \psi_\omega, 1\right\}(4n f_2).
\end{equation}
By (\ref{3.22}) the sum in the left part of the inequality (\ref{3.28}) equals $d_\omega(U^{(1)}) -
d_\omega(U^{(m_1)})$. Hence,
$$|U^{(1)}_\omega| \geqslant \max\left\{\frac{1}{4n} \psi_\omega, 1\right\} (4n f_2) = \psi_\omega f_2,$$
which, in case of the inequality (\ref{quadr}), contradicts (\ref{psi}). This contradiction was
obtained from the supposition that there are prohibited paths (\ref{3.6}). Hence there are no
prohibited paths. The proposition is proved.

\end{proof}

{\bf Proof of Theorem \ref{th:main1}.}  We perform the elimination process for the generalized
equation $\Omega = \Omega_{v_0}$ corresponding to $G$. If the process goes infinitely we obtain one
of the following:
\begin{itemize}
\item free splitting of $G$ with at least on non-trivial free factor (4.4.1) and, maybe, some surface
group factor (4.4.2),

\item a decomposition of $G$ as the fundamental group of a graph of groups with QH-vertex groups
corresponding to quadratic sections (4.4.4 (a)),

\item decomposition of $G$ as the fundamental group of a graph of groups with abelian vertex groups
corresponding to periodic structures ( see Proposition \ref{PerSt} in 4.4.4 (c)) and HNN-extensions
with stable letters infinitely longer than the generators of the abelian associated subgroups
(4.4.4 (c)).
\end{itemize}
Then we continue the elimination process with the generalized equation $\Omega_{v_0}$ where the
active part corresponds to weakly rigid subgroups. In the case of an HNN-extension the complexity
$\tau(\Omega_{v_1})$ is smaller than $\tau(\Omega_{v_0}).$ In the other cases the Delzant--Potyagailo
complexity of weakly rigid subgroups is smaller by Proposition \ref{DP}. Therefore this procedure
stops. At the end we obtain some generalized equation $\Omega_{v_{fin}}$ with all non-active sections.
Continuing the elimination process till the end, we obtain a complete system of linear equations with
integer coefficients $\Sigma_{complete}$ on the lengths of  items $h_i$'s that is automatically
satisfied, therefore the associated maximal abelian subgroups  are length-isomorphic.

Notice that if $G_{i+1} = G_{\Omega _{v_{i+1}}}$ is obtained  as the fundamental group of a graph of
groups with two vertices, where $G_i$ is one vertex group and the other vertex is a QH-vertex, then
one can also obtain $G_{i+1}$ from $G_i$  by taking a free product of $G_i$ and a free group and then
the HNN-extension with cyclic associated subgroups. If $G_{i+1}$ is an amalgamated product of $G_i$
and a free abelian group then it can be obtained from $G_i$ as a series of extensions of centralizers.

This completes the proof of Theorem \ref{th:main1}.

\hfill $\square$

\begin{remark}
\label{nonreg1}
If we begin with the group $\widetilde G$ mentioned in Remark \ref{nonreg} with free but not necessary
regular length function in $\Lambda$ then in the Elimination process we work with the generalized
equation $\Omega$ and add a finite number of elements from $\widehat G$. Thus we got an embedding of
$\widetilde G$ in a group that can be represented as a union of a finite series of groups
$$G_1 < G_2 < \cdots < G_n = G,$$
where
\begin{enumerate}
\item $G_1$ is a free group,
\item $G_{i+1}$ is obtained from $G_i$ by finitely many HNN-extensions in which associated subgroups
are maximal abelian and length isomorphic as subgroups of $\Lambda$.
\end{enumerate}
\end{remark}

\begin{remark}
\label{fin}
As a result of the Elimination process, the equation $\Omega_{v_{fin}}$ (we will denote it
$\Omega_{fin}$) is defined on the multi-interval $I$, that is, a union of closed sections which have
a natural hierarchy: a section $\sigma_1$ is smaller than a section $\sigma_2$ if the largest base
on $\sigma_2$ is infinitely larger than the largest base on $\sigma_1$.

The lengths of bases satisfy the system of linear equations $\Sigma_{complete}$.
\end{remark}

\section{Proof of Theorems \ref{th:main3} and \ref{th:main4}}

Suppose $G$, as above, is a $\Lambda$-free group with a regular action. Let $\Omega$ be a generalized
equation for $G$ corresponding to the union of closed sections $I$, $G = \langle \mathcal M \mid
\Omega({\mathcal M}) \rangle$.  Consider the Cayley graph $X = Cay(G, \cal M)$ of $G$ with respect to
the generators ${\cal M}$. Assign to edges of $Cay(G, \cal M)$ their lengths
in $\Lambda$ and consider edges as closed intervals in $\Lambda$ of the corresponding length. For
each relation between bases of $\Omega$, $\lambda_{i_1} \cdots \lambda_{i_k} = \lambda_{j_1} \cdots
\lambda_{j_m}$ (without cancelation) there is a loop in $X$ labeled by this relation. Then the path
on the boundary of this loop labeled by $\lambda_{i_1} \cdots \lambda_{i_k}$ has the same length in
$\Lambda$ as the path labeled by $\lambda_{j_1} \cdots \lambda_{j_m}$. If $x$ is a point on the path
$\lambda_{i_1} \cdots \lambda_{i_k}$ at the distance $d \in \Lambda$ from the beginning of the path,
and $x'$ is a point on the path $\lambda_{j_1} \cdots \lambda_{j_m}$ at the distance $d$ from the
beginning, then we say that {\em $x$ and $x'$ are in the same leaf}. In other words, after we
substitute generators in $\mathcal M$ by their infinite word representations, we ``fold'' loops
into segments. We consider the equivalence
relation between points of $X$ generated by all such pairs $x \sim x'$. Equivalence classes of this
relation are called {\em leaves}. We also glue an arc isometric to a unit interval between each $x$
and $x'$. Let ${\cal F}$ be the foliation (the set of leaves). One can define a foliated complex
$\Sigma = \Sigma(X, {\cal F})$ associated to $X$ as a pair $(X, {\cal F})$. We define a metric space
$T(X)$ as follows. The paths in $\Sigma$ can travel vertically (along the leaves) and horizontally
(along the intervals in $\Lambda$). The length of a path $\gamma$ in $\Sigma (X, {\cal F})$ (denoted
$\|\gamma\|$) is the sum of the lengths of horizontal intervals in $\gamma$. The pseudo-distance $d$ between
two points of $\Sigma$ is defined as the length of a minimal path between them. A metric space $T$ is obtained from $\Sigma$ by identifying points at
pseudo-distance $0$. We extend naturally the left action of $G$ on $X$  to the action on $T$.

\begin{lemma}
\begin{enumerate}
\item[(1)] $T$ is a $\Lambda$-tree.
\item[(2)] The left action of $G$ on $T$ is free.
\end{enumerate}
\end{lemma}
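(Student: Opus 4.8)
The plan is to identify $T$ with the subtree of reduced infinite words spanned by the $G$-orbit of the base point, and then to read off both statements from the known structure of the $\Lambda$-tree attached to the free Lyndon length function $L$. As a first step I would check that the pseudo-distance $d$ is a genuine $\Lambda$-valued pseudo-metric on $\Sigma$: symmetry and the triangle inequality are immediate once $d(p,q)$ is the length of a \emph{minimal} path, and every such length is a finite sum of lengths of horizontal sub-intervals, hence lies in $\Lambda$ (no passage to $\Lambda_{\mathbb{Q}}$ is needed). Since $T$ is by definition the quotient identifying points at pseudo-distance $0$, $d$ descends to an honest $\Lambda$-metric on $T$, so (M1)--(M3) hold tautologically. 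The left multiplication action of $G$ on $X=Cay(G,{\mathcal M})$ permutes the loops coming from the basic equations and preserves horizontal lengths, so it extends to $\Sigma$ and descends to an action of $G$ on $T$ by isometries; the vertex $1$ gives a base point $[1]\in T$, and the class of a vertex $g$ is denoted $[g]$.

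The crucial step, and the one I expect to be the main obstacle, is to prove that for every $g\in G$ one has $d([1],[g]) = L(g) = |\xi(g)|$, and more generally that the leaves of $T$ are exactly the prefixes of the infinite words $\xi(g)$, $g\in G$. Any edge-path from $1$ to $g$ in $X$ spells a product of pieces $\lambda_e$ equal to $g$; after substituting the infinite-word values and folding along the basic equations, I would argue that folding collapses precisely the cancellations occurring in this product --- this is exactly where $\Lambda$-freeness and the fact that the generalized-equation relations $\lambda_{i_1}\circ\cdots\circ\lambda_{i_k}=\lambda_{j_1}\circ\cdots\circ\lambda_{j_m}$ hold \emph{without cancellation} are used. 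Consequently the horizontal length of the folded path is bounded below by $|\xi(g)|$, while the reduced product $\xi(g)=\lambda_{i_1}\circ\cdots\circ\lambda_{i_k}$ realises this value exactly, and an interior point of a minimal path at horizontal distance $\delta$ from $[1]$ represents exactly the prefix of $\xi(g)$ of length $\delta$. The delicate point is to establish tightness and injectivity simultaneously: that folding produces \emph{no} shortcut shorter than $|\xi(g)|$ and introduces \emph{no extra} identifications, so distinct prefixes remain distinct in $T$.

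Granting this computation, the assignment sending a leaf of $T$ to the prefix it represents is a well-defined $G$-equivariant isometry $\Phi$ from $T$ onto the set of all prefixes of elements of $\xi(G)\subseteq CDR(\Lambda,Z)$, where $G$ acts by $w\mapsto \xi(g)\ast w$. By $G$-equivariance and the previous paragraph, the orbit points satisfy $d([g],[h]) = d([1],[g^{-1}h]) = L(g^{-1}h)$, which is exactly the metric on the orbit of the base point in the $\Lambda$-tree determined by the free Lyndon length function $L$ (see \cite{Ch,MS} and Theorem \ref{co:3.1}); moreover $T$ is the convex hull of its orbit, since every prefix of $\xi(g)$ lies on the geodesic $[[1],[g]]$ and, conversely, every point of a geodesic $[[g],[h]]$ is a prefix of $\xi(g)$ or of $\xi(h)$. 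As the convex hull of a subset of a $\Lambda$-tree is again a $\Lambda$-tree and is determined up to equivariant isometry by the metric on that subset, $\Phi$ exhibits $T$ as a subtree of the $\Lambda$-tree of infinite words; hence $T$ is geodesic and satisfies (T2)--(T3), proving (1). For (2), the based length function of the action at $[1]$ is $l_{[1]}(g)=d([1],[g])=L(g)$, which is free by hypothesis; concretely, for $g\neq 1$ the word $\xi(g)\in CDR(\Lambda,Z)$ has a cyclic decomposition $\xi(g)=c^{-1}\circ u\circ c$ with $u\neq\varepsilon$ cyclically reduced, so left multiplication by $\xi(g)$ is a hyperbolic isometry of translation length $|u|>0$ with no fixed point, and the same applies to $\xi(g^2)=c^{-1}\circ u^2\circ c$. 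Therefore neither $g$ nor $g^2$ fixes a point of $T$, and the action is free.
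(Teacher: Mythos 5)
Your overall strategy --- identifying $T$ with the prefix-closed subtree of $CDR(\Lambda,Z)$ spanned by $\xi(G)$ --- is a reasonable way to frame the statement, but as written the proof has a genuine gap, and you locate it yourself: everything hinges on the claim that $d([1],[g])=|\xi(g)|$ and that folding introduces no identifications beyond equality of prefixes, and you then proceed by ``Granting this computation.'' That claim is not a technical verification to be deferred; it \emph{is} the lemma. Both conclusions (1) and (2) follow from it in a few lines, exactly as your last paragraph shows, so a proof that assumes it has established nothing. Nothing in your sketch rules out the dangerous scenario: a chain of folds along \emph{several different} relation loops, each used in the middle of the previous one, could a priori produce a vertical path joining $x$ to $gx$ for some $g\neq 1$ (destroying freeness) or a nondegenerate loop in $T$ (destroying the tree property). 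The observation that each single relation $\lambda_{i_1}\cdots\lambda_{i_k}=\lambda_{j_1}\cdots\lambda_{j_m}$ holds without cancellation does not by itself exclude such chains.

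Compare with what the paper actually does, which is precisely to close this gap. For freeness it takes an alleged vertical path $x\sim x_1\sim\cdots\sim x_k=gx$ through relation loops $R_1,\dots,R_k$, uses the fact that both sides of each $R_i$ represent the same infinite word to convert the equal-length conditions $\ell(p_{i1}\,\mu_{(i-1)1})=\ell(p_{i2}\,\mu_{i1})$ into equalities of initial segments, and deduces that the element $p_{k2}^{-1}p_{k1}\cdots p_{12}^{-1}p_{11}$ is trivial in $G$ while at the same time carrying $gx$ to $x$; hence $gx=x$ in the Cayley graph and $g=1$, with a separate reduction when $x$ is an interior point of an edge. For the tree property it proves directly, by induction on the number of vertical components, that every loop in $\Sigma$ has degenerate image in $T$, i.e.\ that folding creates no shortcuts and no nontrivial cycles. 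These two arguments are exactly the ``tightness and injectivity'' you flag as delicate and then assume; to repair your proof you would have to supply arguments of essentially this kind, at which point your isometry $\Phi$ becomes a corollary rather than a tool. Two smaller points: your appeal to the convex hull being determined up to equivariant isometry by the metric on the orbit already presupposes $d([g],[h])=L(g^{-1}h)$, i.e.\ the same unproved claim, so the argument is circular there; and the existence of a \emph{minimal} path realizing the pseudo-distance in $\Sigma$ deserves justification, since in an arbitrary ordered abelian group $\Lambda$ infima of path lengths need not be attained.
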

\begin{proof} We will prove the second statement first. Suppose $x$ is a vertex of $X$, $g\in G$. Then $gx$ is also a vertex of $X$. If $d(gx, x)
= 0$ in $\Sigma$,  then $x$ and $gx$ are in the same leaf. Hence, there is a vertical path $x \sim x_1 \sim \cdots
\sim x_k = gx$ that uses relations from $\Omega({\mathcal M})$
$$R_1: p_{11}\ q_{11} = p_{12}\ \mu_1\ q_{12},\ \ \ R_2: p_{21}\ \mu_1\ q_{21} = p_{22}\ \mu_2\ q_{22},
\ \ \ \ldots,\ \ \ R_k: p_{k1}\ \mu_{k-1}\ q_{k1} = p_{k2}\ q_{k2},$$
where all $p_{ij}, q_{ij}$ are products of some bases ($G$ is generated by bases from ${\mathcal M}$);
$x$ is a vertex between $p_{11}$ and $q_{11}$, $x_i$ is a vertex on $\mu_i$, $x_k$ is a vertex between
$p_{k2}$ and $q_{k2}$. Then $\mu_i = \mu_{i1} \mu_{i2}$, where
$$\ell(p_{11}) = \ell(p_{12}\ \mu_{11}),\ \ell(p_{21}\ \mu_{11}) = \ell(p_{22}\ \mu_{21}), \ldots,
\ell(p_{i1}\ \mu_{(i-1) 1}) = \ell(p_{i2}\ \mu_{i1}),\ \ell(p_{k1}\ \mu_{(k-1) 1}) = \ell(p_{k2}).$$

Since relations are coming from the generalized equation $\Omega$, $p_{11}$ and $p_{12} \mu_{11}$ are
the same infinite words, as well as $p_{i1}\ \mu_{(i-1) 1}$ and $p_{i2}\ \mu_{i1}$ are the same infinite
words for $i = 2, \ldots, k-1$, and $p_{k1}\ \mu_{(k-1) 1}$ and $p_{k2}$ are the same infinite words.
Therefore $\mu_{11} = p_{12}^{-1}\ p_{11} = p_{21}^{-1}\ p_{22}\ \mu_{21},\ $ $\mu_{21} = p_{31}^{-1}\
p_{32}\ \mu_{31}, \ldots,\ $ $\mu_{(k-1) 1} = p_{k1}^{-1}\ p_{k2}$ and $p_{12}^{-1}\ p_{11} = p_{21}^{-1}\
p_{22} \ldots p_{k1}^{-1}\ p_{k2}$.

Then $p_{k2}^{-1}\ p_{k1} \cdots p_{12}^{-1}\ p_{11} = 1$ in $G$. But the element $p_{k2}^{-1}\ p_{k1} \cdots p_{12}^{-1}\ p_{11}\in G$ takes $gx\in X$
to $x\in X$, hence $ p_{k2}^{-1}\ p_{k1} \cdots
p_{12}^{-1}\ p_{11} gx = x$. Therefore $gx=x$ and $g = 1$.

If $x$ is not a vertex of $X$, then $x$ is a point on some edge of $X$ labeled by $\mu_0$ and joining
some $x_0$ and $x_0 \mu_0$, while $g x$, by translation in the Cayley graph, is a point on the edge
of $X$ labeled by $\mu_0$ and joining $g x_0$ and $g x_0 \mu_0$. There is a vertical path $x \sim x_1
\sim \cdots \sim x_k = gx$ that uses relations of $\Omega({\mathcal M})$
$$R_1: p_{11}\ \mu_0\ q_{11} = p_{12}\ \mu_1\ q_{12},\ \ \ R_2: p_{21}\ \mu_1\ q_{21} = p_{22}\ \mu_2\
q_{22},\ \ \ \ldots,\ \ \ R_k: p_{k1}\ \mu_{k-1}\ q_{k1} = p_{k2}\ \mu_0\ q_{k2}.$$
Let $y$ be a vertex of $X$ belonging to one of the loops $R_1, \ldots, R_k$ and nearest to the leaf
joining $x$ and $gx$. Suppose $y$ belongs to the loop $R_2$ and it is a vertex between $p_{21}$ and
$\mu_1$. Then there is a leaf joining $y$ and $gy$ that uses the relations $R_2, \ldots, R_k, R_1$,
and we can show that $g = 1$ as above.

\begin{figure}[htbp]
\centering{\mbox{\psfig{figure=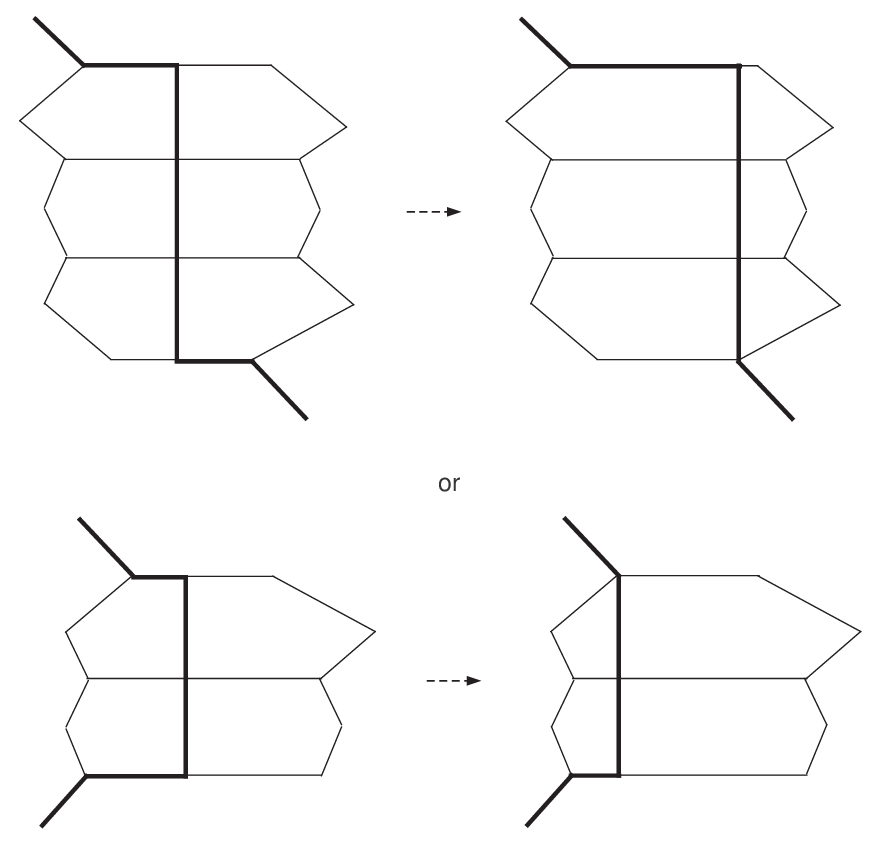,height=3in}}}
\caption{Replacing $\gamma_1$ by $\gamma_3$}
\label{pic:paths_0}
\end{figure}

Now we prove that $T$ is a $\Lambda$-tree. Suppose, we have a loop $\gamma$ in $\Sigma$ beginning
at $x$, which is a vertex of $X$ ($x \in VX$). We are going to show that the image of this loop becomes
degenerate in $T$ (if we remove all subpaths of the form $pp^{-1}$ from the image of $\gamma$ we get a point).
Notice that the image of any closed horizontal path in $\Sigma$ is degenerate in
$T$, so in the loop we can replace a horizontal subpath joining any two vertices $y,z \in VX$ by any
other horizontal path joining $y$ and $z$. Represent $\gamma = \gamma_1 \gamma_2$, where $\gamma_1$
is a sequence of vertical and horizontal paths between $x$ and another vertex $y$ of $X$, and
$\gamma_2$ is a horizontal path between $y$ and $x$ such that if there is a horizontal subpath $p$ at
the end of $\gamma_1$ then $p$ does not contain another vertex $z \in VX$. We can use induction on
the number of vertical components in $\gamma_1$, the case of zero vertical components being trivial.
We can replace the path $\gamma_1$ by a path $\gamma_3$ of equal or shorter length with the same
image in $T$ such that each vertical subpath of $\gamma_3$ is passing through some vertex of $X$
(see Figure \ref{pic:paths_0}).

\begin{figure}[htbp]
\centering{\mbox{\psfig{figure=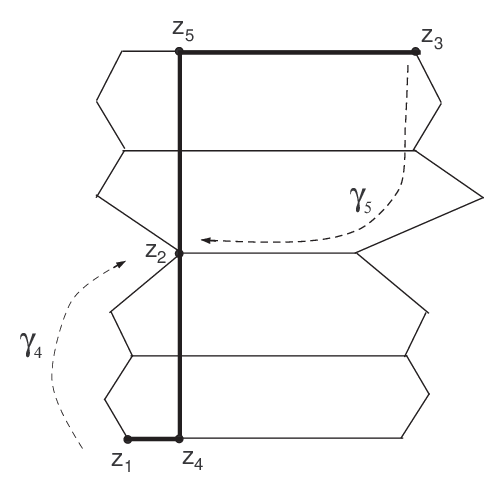,height=2.5in}}}
\caption{Getting rid of a vertical subpath}
\label{pic:paths}
\end{figure}

Let $z_1 \in VX$ be the first vertex of $\gamma _2$. Suppose the last vertical part of $\gamma_3$ begins
with $z_5$, ends with $z_4$ and is passing through $z_2 \in VX$. Then the image in $T$ of the circle
at $z_2$ consisting of the vertical path from $z_2$ to $z_4$ (see Figure \ref{pic:paths}) and the
corresponding horizontal path from $z_4$ to $z_2$ (denote it $\gamma _4$) is degenerate. The image
of the circle at $z_2$ consisting of the vertical path from $z_2$ to $z_5$ and the horizontal path
$z_2, z_3, z_5$ is also degenerate. Denote by $\gamma_5$ the horizontal path
between $z_3$ and $z_2$. We now replace $\gamma_2$ by $\gamma_5 \gamma_4^{-1} \gamma_2$ and $\gamma_1$
by the part of $\gamma_1$ from $x$ to $z_3$. The image of this path in $T$ is degenerate by induction
because it has less vertical subpaths. Therefore, $T$ is a $\Lambda$-tree.
\end{proof}

Similarly, we can construct a $\Lambda$-tree where $G$ acts freely beginning not with the original generalized equation $\Omega$ but with
a generalized equation obtained from $\Omega$ by the application of the Elimination process. In this case we may have closed sections with some items $h_i$ with $\gamma (h_i)=1. $ Then to construct the Cayley graph of $G$ in the new generators we cover items $h_i$ with $\gamma(h_i)
= 1$ by bases without doubles. (These bases were previously removed at some stages of the Elimination process
as matching pairs).

{\em Proof of Theorem \ref{th:main3}.} We begin by considering the union of the smallest closed
sections, see Remark \ref{fin}. Denote the union of these sections by $\sigma$. The group $H$ of the
generalized equation corresponding to their union is a free product of free groups, free abelian
groups and closed surface groups. Notice that for those sections for which $\gamma(h_i) = 1$ for some
maximal height items, these items $h_i$ are also products of some bases because initially every item
is a product of bases. We can assume the following:

\begin{enumerate}
\item For all closed sections of $\sigma$ such that $\gamma(h_i) = 2$ for all items of the maximal
height, the number of bases of maximal height cannot be decreased using entire transformation or
similar transformation applied from the right of the section ({\em right entire transformation});

\item  For all closed sections of $\sigma$ where we apply linear elimination, the number of bases of maximal height cannot be decreased using the transformations
as above as well as transformation (E2) (transfer) preceded  by creation of necessary boundary
connections as in (E5) (denote it ($E2_5$)), (E3) and first two types of linear elimination (D6)
(denote it ($D6_{1,2}$));

\end{enumerate}

Indeed, if we can decrease the number of bases of maximal height using the transformations described
above then we just do this and continue. Since these transformations do not change the total number
of bases we can also assume that they do not decrease the number of bases of second maximal height etc.
We now re-define the lengths of bases belonging to $\sigma$ in ${\mathbb R}^k$. We are going to show
that all components of the length of every base can be made zeros except for the components which
appear to be maximal in the lengths of bases from $\sigma$.

Let $\Omega_\sigma$ be the generalized equation corresponding to the sections from $\sigma$. Let $H$
be the group of the equation $H = \langle \mathcal M_\sigma \mid \Omega_\sigma(M) \rangle$.

Denote by $\Lambda_1$ the minimal convex subgroup of $\Lambda$ containing lengths of all bases in
$\sigma$, and by $\Lambda'$ a maximal convex subgroup of $\Lambda_1$ not containing lengths of maximal
height bases in $\sigma$ (it exists by Zorn's lemma). Then the quotient $\Lambda_1 / \Lambda'$ is a
subgroup of $\mathbb R$. Denote by $\hat\ell$ the length function in $\mathbb R$ on this quotient
induced from $\ell$. We consider elements of $\Lambda'$ as infinitesimals. Denote by $\widehat T$ the
$\mathbb R$-tree constructed from $T$ by identifying points at zero distance (see \cite{Ch1},
Theorem 2.4.7). Then $H$ acts on $\widehat T$. Denote by $\|\gamma\|_{\mathbb R}$ the induced length
of the path $\gamma$ in $\widehat T$, and by $d_{{\mathbb R}}(\bar x,\bar y)$ the induced distance.

However, the action of $H$ on $\widehat T$ is not free. The action is minimal, that is, there is no
non-empty proper invariant subtree. Notice that the canonical projection $f: T \rightarrow \widehat
T$ preserves alignment, and the pre-image of the convex set is convex. The pre-image of a point in
$\widehat T$ is an infinitesimal subtree of $T$.

\begin{lemma}
The action of $H$ on $\widehat T$ is superstable: for every non-degenerate arc $J \subset \widehat T$
with non-trivial fixator, and for every non degenerate subarc $S \subset J$, one has $Stab(S) =
Stab(J)$.
\end{lemma}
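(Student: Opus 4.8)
The plan is to prove the inclusion $Stab(S)\subseteq Stab(J)$, the reverse inclusion being automatic since $S\subset J$. Fix $1\neq h\in Stab(J)$, which exists by the standing hypothesis that $J$ has non-trivial fixator, and take $g\in Stab(S)$ with $g\neq 1$ (the case $g=1$ is trivial). Since $H$ acts freely on $T$ by the preceding lemma, both $g$ and $h$ are hyperbolic isometries of $T$; write $A_g,A_h$ for their axes and $\ell(g),\ell(h)$ for their translation lengths. Recall that $d(x,gx)=\ell(g)+2\,d(x,A_g)$ for every $x\in T$, so $g$ fixes the point $f(x)\in\widehat T$ exactly when $\ell(g)+2\,d(x,A_g)\in\Lambda'$.

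First I would record that the translation lengths are infinitesimal and that the fixed sets are the collapsed axes. If $\ell(g)\notin\Lambda'$ then $d(x,gx)\geqslant\ell(g)$ is non-infinitesimal for all $x$, so $g$ would act freely on $\widehat T$ and could not fix the non-degenerate arc $S$; hence $\ell(g)\in\Lambda'$, and likewise $\ell(h)\in\Lambda'$. Once $\ell(g)\in\Lambda'$, the displacement $d(x,gx)$ lies in $\Lambda'$ precisely when $d(x,A_g)\in\Lambda'$, so $Fix_{\widehat T}(g)$ equals the image under $f$ of the infinitesimal neighbourhood of $A_g$, which is just $f(A_g)$ since $f$ kills infinitesimals and preserves alignment. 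Symmetrically $Fix_{\widehat T}(h)=f(A_h)$. By construction $S\subseteq f(A_g)$ and $J\subseteq f(A_h)$, so in particular both $f(A_g)$ and $f(A_h)$ contain the non-degenerate arc $S$.

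The heart of the argument is the geometric claim that $A_g$ and $A_h$ share a common sub-segment of \emph{non-infinitesimal} length in $T$. Since $f$ restricted to each axis is alignment-preserving and surjects monotonically onto a geodesic containing $S$, parametrise $S$ as $[0,L]$ with $L>0$ in $\Lambda_1/\Lambda'$ and, for each $t\in S$, choose points $A_g(t),A_h(t)$ over $t$; as $f(A_g(t))=f(A_h(t))=t$ we have $d(A_g(t),A_h(t))\in\Lambda'$ for all $t$. In a tree two geodesics agree along a common sub-arc and separate at a single point outside it. If that common sub-arc had infinitesimal length then, $S$ being non-infinitesimal, a non-infinitesimal portion of $S$ would lie to one side of the sub-arc's image; choosing $t$ in that portion, the geodesic in $T$ from $A_g(t)$ to $A_h(t)$ runs through the separation point, so $d(A_g(t),A_h(t))$ is at least the non-infinitesimal distance by which $t$ overshoots it, contradicting $d(A_g(t),A_h(t))\in\Lambda'$. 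Hence $A_g\cap A_h$ has length exceeding every element of $\Lambda'$, and in particular exceeding $\ell(g)+\ell(h)$.

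The final step converts this overlap into commutation. Writing $u$ and $v$ for the cyclically reduced words representing $g$ and $h$, so that $|u|=\ell(g)$ and $|v|=\ell(h)$, the common segment is simultaneously $u$-periodic and $v$-periodic; since its length exceeds $|u|+|v|$, Lemma \ref{double shift} gives $[u,v]=1$, and the CSA-property of $G$ (already invoked in Lemma \ref{cycles}) upgrades this to $[g,h]=1$. Two commuting hyperbolic isometries of a tree under a free action share one axis, so $A_g=A_h$ and therefore $Fix_{\widehat T}(g)=f(A_g)=f(A_h)=Fix_{\widehat T}(h)\supseteq J$. Thus $g$ fixes $J$ pointwise, i.e. $g\in Stab(J)$, giving $Stab(S)\subseteq Stab(J)$ and hence the desired equality. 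I expect the geometric claim of the third paragraph to be the main obstacle, as it requires controlling how the two axes interleave inside the infinitesimal fibres of $f$; by contrast the passage from overlapping axes to commuting elements is routine given Lemma \ref{double shift} and the CSA-property, and the role of the hypothesis that $J$ has non-trivial fixator is precisely to supply the element $h$ whose collapsed axis already contains all of $J$.
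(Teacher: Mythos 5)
Your proof is correct and is essentially the argument the paper appeals to: the paper's entire proof is the one-line citation ``The proof is the same as the proof of Fact 5.1 in \cite{G},'' and Guirardel's proof of that Fact runs exactly as yours does --- infinitesimal translation lengths for arc-fixing elements, identification of $Fix_{\widehat T}(g)$ with the collapsed axis $f(A_g)$ via the displacement formula, a non-infinitesimal overlap of $A_g$ and $A_h$ forced by the fact that both collapse onto the non-degenerate arc $S$, and commutation of $g$ and $h$ from the long overlap. Your only deviation is in the last step, where Guirardel uses the standard $\Lambda$-tree fact that an axis overlap of length at least $\ell(g)+\ell(h)$ makes the commutator $[g,h]$ elliptic (hence trivial by freeness), while you route through Lemma \ref{double shift} and the CSA property --- this works in the paper's setting but tacitly requires identifying segments of the axes in $T$ with $\Lambda$-words and relating the conjugated periods back to $g$ and $h$, so the classical translation-length argument is the cleaner choice there.
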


The proof is the same as the proof of Fact 5.1 in \cite{G}.

\begin{lemma}
\label{quad-main}
Let $\sigma_1$ be a closed section on the lowest level corresponding to a closed surface group and
the lengths of the bases satisfy some system of linear equations $\Sigma$. Then one can define the
lengths of bases in $\sigma_1$ in ${\mathbb R}^k$.
\end{lemma}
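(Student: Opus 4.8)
The plan is to realize the $\Lambda$-lengths of the bases of $\sigma_1$ as lengths in a lexicographically ordered $\mathbb{R}^k$ in such a way that the system $\Sigma$ stays satisfied and the associated action of the closed surface group $H_1 = \langle \mathcal M_{\sigma_1}\mid \Omega_{\sigma_1}\rangle$ stays free. The starting observation I would use is that $\sigma_1$ lies on the lowest level of the hierarchy of Remark~\ref{fin}, so the maximal-height bases of $\sigma_1$ have length in $\Lambda_1 \setminus \Lambda'$ and no two distinct $H_1$-translates of a vertex of $X$ are at infinitesimal distance. Hence the induced action of $H_1$ on the $\mathbb{R}$-tree $\widehat T$ is free, and the real-valued length function $\hat\ell$ is a free $\mathbb{R}$-length function on $H_1$; this $\hat\ell$ is to become the top coordinate of the sought length function in $\mathbb{R}^k$.

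First I would fix the finitely generated ordered abelian group $L\leqslant \Lambda$ generated by the base lengths $|b|$ ($b\in\mathcal M_{\sigma_1}$) together with all the Gromov products occurring in the reduced solution of $\Omega_{\sigma_1}$, arguing from finite generation of $H_1$ and finiteness of the base set produced by the elimination process that all lengths and products of elements of $H_1$ are expressible inside such an $L$ of finite rank. Since a finitely generated ordered abelian group has a finite chain of convex subgroups $0=L_0<L_1<\cdots<L_k=L$, Hahn's embedding theorem (see \cite{Ch1}) provides an order embedding $\theta:L\hookrightarrow \mathbb{R}^k$ whose $i$-th coordinate is the real image of $L_i/L_{i-1}$, the top coordinate being the projection through $\Lambda_1/\Lambda'$, i.e. $\hat\ell$. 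Composing the length function of $H_1$ with $\theta$ yields an $\mathbb{R}^k$-valued function, and because $\theta$ is an order-preserving homomorphism it preserves the axioms (L1)--(L6); thus the composite is again a free regular Lyndon length function, now with values in $\mathbb{R}^k$ and realized by a free action by \cite{Ch,MS}. As $\Sigma$ is a homogeneous integer system satisfied by $(|b|)_b$, it is automatically satisfied by $(\theta(|b|))_b$, so no extra work is needed to honor $\Sigma$.

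The hard part is the finiteness/freeness input underlying this formal step. Concretely, one must show that the value group $L$ is genuinely of finite rank (so that $k$ is finite) and that the real projection already yields a \emph{free} action of the surface group. For the latter I would invoke that closed surface groups are $\mathbb{R}$-free by Rips' theorem \cite{BF,GLP}, realized here by the measured foliation that the Rips machine attaches to the minimal superstable action of $H_1$ on $\widehat T$; the nondegeneracy hypotheses preceding the lemma (that the number of maximal-height bases of $\sigma_1$ cannot be decreased by the admissible transformations) guarantee that the standard quadratic equation of $\sigma_1$ is that of an honest closed surface, so the real lengths $\hat\ell(b)$ are strictly positive and the foliation is of surface type. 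Finiteness of the rank of $L$ is the principal obstacle, and I expect to obtain it by induction on the convex filtration of $\Lambda_1$: peel off the top real coordinate $\hat\ell$, observe that by the lowest-level assumption the part supported on $\Lambda'$ contributes no new surface dynamics, and apply the same analysis on strictly fewer convex layers.
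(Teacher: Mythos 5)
Your proposal has a genuine gap, and it sits exactly where you yourself locate ``the hard part.'' The Hahn-type embedding step is fine as formal bookkeeping: composing a free regular Lyndon length function with an order-embedding $\theta : L \hookrightarrow \mathbb{R}^k$ preserves (L1)--(L6) and any homogeneous integer system $\Sigma$, and the paper indeed quotes Lemma 1.1.6 of \cite{Ch1} for the decomposition of a finitely generated ordered abelian group. But the claim that all lengths of elements of $H_1$ lie in a finitely generated subgroup $L$ generated by the base lengths and ``the Gromov products occurring in the reduced solution'' is precisely the content of the lemma, not a consequence of finite generation of $H_1$. For a finitely generated subgroup of $CDR(\Lambda,Z)$, cancellation in a product of infinite words can occur at a depth which is a \emph{new} element of $\Lambda$, rationally independent of the generator lengths and of any finite list of Gromov products: one can engineer $x = u \circ v$, $y = u \circ w$ so that $x^{-1}y = v^{-1}\circ w$ shares with $u$ a fresh prefix length $\lambda_2$, and iterate, producing lengths generating a subgroup of infinite rank. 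Ruling this out for the section $\sigma_1$ is what the paper's proof actually does, and it does so dynamically, not by general position: it runs the entire transformation on $\sigma_1$, reduces to the case where the number of bases in ${\mathcal M}_{\sigma+}$ never decreases (so bases of ${\mathcal M}_{\sigma 0}$ serve only as transfer bases), and then shows via the leaf structure of $\Sigma$ that if a product $g$ of bases not all from ${\mathcal M}_{\sigma 0}$ had infinitesimal length $\delta$, elementary operations would manufacture a base of length $\delta$, contradicting that stability; Lemma \ref{quad} then confines all infinitesimal lengths to $\Lambda''$, so there is no element of intermediate height and every non-maximal component can be zeroed, with $k$ bounded by the number of pairs of bases. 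Nothing in your sketch substitutes for this step: your proposed ``induction on the convex filtration of $\Lambda_1$'' is unavailable, since the chain of convex subgroups of $\Lambda_1$ need not be finite (the index set $I_\Lambda$ is an arbitrary linear order) --- finiteness of the relevant filtration is a conclusion of the argument, not a structure you may induct on.

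A second, more localized error: you assert that no two distinct $H_1$-translates of a vertex are at infinitesimal distance, hence that the induced action of $H_1$ on the $\mathbb{R}$-tree $\widehat T$ is free. The paper states the opposite explicitly (``However, the action of $H$ on $\widehat T$ is not free''): the short bases in ${\mathcal M}_{\sigma 0}$ have infinitesimal length and therefore fix points of $\widehat T$, and identifying these point stabilizers (they are generated by elements of ${\mathcal M}_{\sigma 0}$) is a pivot of the paper's argument rather than an obstruction it avoids. Relatedly, your appeal to Rips' theorem and the Rips machine for the surface-type foliation is foreign to the paper's method, which stays entirely within the combinatorics of the elimination process; that substitution would be legitimate only after the freeness and finite-rank issues above were resolved, which is where the real work lies.
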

\begin{proof} Denote by ${\mathcal M}_{\sigma +}$ the set of bases (on all the steps of the process
of entire transformation applied to the lowest level) with non-zero oldest component and by
${\mathcal M}_{\sigma 0}$ the rest of the bases (infinitesimals). Denote by $\hat\ell$ the projection
of the length function $\ell$ to $\Lambda_1 / \Lambda'$ as before. Then $\lambda \in
{\mathcal M}_{\sigma +}$ if and only if $\hat\ell(\lambda) > 0$. We apply the entire transformation
to $\sigma_1$. If we obtain an overlapping pair or an infinitesimal section, where the process goes
infinitely, we declare it non-active and move to the right. This either decreases, or does not change
the number of bases in the active part. Therefore, we can assume that the process goes infinitely
and the number of bases in ${\mathcal M}_{\sigma +}$ never decreases. Therefore bases from
${\mathcal M}_{\sigma 0}$ are only used as transfer bases.

We will show that the stabilizer of a pre-image of a point, an infinitesimal subtree $T_0$ of $T$ is
generated by some elements in ${\mathcal M}_{\sigma 0}$. An element $h$ from $H$ belongs to such a
stabilizer if $\hat\ell(h) = 0$. If some product of bases not only from ${\mathcal M}_{\sigma 0}$
has infinitesimal length (denote this product by $g$), then the identity and $g$ belong to leaves at
the infinitesimal distance $\delta$ in $\Sigma$. Therefore using elementary operations we can obtain
a base of length $\delta$. Denote by $\Lambda''$ the minimal convex subgroup of $\Lambda$ containing
all elements of the same height as bases from ${\mathcal M}_{\sigma 0}$.

This implies the following lemma which we need to finish the proof of lemma \ref{quad-main}.

\begin{lemma}
\label{quad}
Let $\overline\sigma_1$ be the projection of the quadratic section $\sigma_1$ to $\Lambda_1 /
(\Lambda'' \cap \Lambda_1)$. Suppose the process of entire transformation for $\sigma_1$ goes
infinitely and the number of bases in ${\mathcal M}_{\sigma +}$ never decreases. Then the process
of entire transformation for $\overline\sigma_1$ goes infinitely too and the number of bases in
${\mathcal M}_{\sigma +}$ never decreases.

If $\ell(g) \in \Lambda'$ then $g$ is a product of bases in ${\mathcal M}_{\sigma 0}$ and $\ell(g)
\in \Lambda''$.
\end{lemma}

This lemma implies that there is no element in $H$ infinitely larger than all bases in
${\mathcal M}_{\sigma 0}$, but infinitely smaller than all bases in ${\mathcal M}_{\sigma +}$.

Therefore, we can make all components of the lengths of bases in ${\mathcal M}$ zeros except for
those which are maximal components of some bases in ${\mathcal M}$. Since ${\mathcal M}$ is a finite
set, the number of such components is finite, and the length is defined in ${\mathbb R}^k$ for some
$k$ not larger than the number of pairs of bases.
\end{proof}

Let $\sigma$ be a closed section corresponding to infinite linear elimination process for a free
group as in Lemma \ref{lin-el} (c). We cover (for technical reasons) items $h_i$ with $\gamma(h_i)
= 1$ by bases without doubles. (These bases were removed at some stages of the Elimination process
as matching pairs).

\begin{lemma}
\label{lin}
Let $\sigma$ be a closed section corresponding to infinite linear elimination process for a free
group as in Lemma \ref{lin-el} (c). Let $\overline\sigma$ be the projection of section $\sigma$
to $\Lambda_1 / \Lambda''$.  Suppose the process of linear elimination on $\sigma $ goes infinitely
and some generalized equation $\Omega'$ (with the minimal possible number of bases) appears in this
process infinitely many times. Then the process of linear elimination on $\overline\sigma$ goes
infinitely. Moreover, suppose for any base $\mu$ with $\hat\ell(\mu) = 0$ obtained by elementary
transformations, $\ell(\mu) \in \Lambda''$. If $H = \langle {\mathcal M}_{\sigma} \mid \Omega_\sigma
({\mathcal M}_\sigma) \rangle$, and $g \in H, \hat\ell(g) = 0$ then $\ell(g) \in \Lambda''$.
\end{lemma}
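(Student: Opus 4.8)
The plan is to treat the two assertions in turn: the first by pushing the given infinite linear elimination through the projection $\Lambda_1 \to \Lambda_1/\Lambda''$, and the second by combining the foliated complex used to build $T$ with the standing hypothesis on infinitesimal bases. Throughout, the argument will run parallel to the proof of Lemma~\ref{quad} for the quadratic case.

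First I would show that linear elimination on $\overline\sigma$ goes infinitely. By definition of the linear case, each step of the process on $\sigma$ eliminates, via (D6), an item $h_i$ of maximal height with $\gamma(h_i)=1$; since the process is infinite, infinitely many such maximal-height eliminations occur. Now apply the projection to $\Lambda_1/\Lambda''$. The original bases of $\mathcal M_{\sigma 0}$ have length in $\Lambda''$ by the very definition of $\Lambda''$, hence collapse to length zero, while a maximal-height item $h_i$ and the unique base covering it survive with nonzero projected length; neither the maximal height of $h_i$ nor the condition $\gamma(h_i)=1$ can be altered by collapsing infinitesimals. Thus every maximal-height elimination step on $\sigma_j$ projects to a genuine linear-elimination step on $\overline\sigma_j$, and the projected sequence contains infinitely many steps. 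In particular some projected equation of minimal base-number recurs infinitely often, so the process on $\overline\sigma$ is infinite in the sense of Lemma~\ref{lin-el}, and the corresponding group splits as $\overline{G_{Ker}} \ast F(\overline K)$.

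For the second assertion, take $g\in H$ with $\hat\ell(g)=0$ and put $\delta:=\ell(g)$, so $\delta\in\Lambda'$. Realizing $H$ inside the foliated complex $\Sigma=\Sigma(X,\mathcal F)$, the vertices $x$ and $gx$ lie in leaves at pseudo-distance $\delta$, infinitesimal relative to the maximal height. Following the passage preceding Lemma~\ref{quad}, I would trace the vertical connection between these two leaves through the defining relations of $\Omega_\sigma$ and, by a finite sequence of elementary transformations (ET1)--(ET5), obtain from it a single base of length $\delta$. This base is produced by elementary transformations and satisfies $\hat\ell=0$, so the standing hypothesis of the lemma applies to it and forces $\delta\in\Lambda''$. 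Hence $\ell(g)=\delta\in\Lambda''$, as required. Equivalently, the point stabilizer $H_{\hat x}=\{h\in H:\hat\ell(h)=0\}$ of the base point in $\widehat T$ is generated by infinitesimal bases, all of whose lengths lie in $\Lambda''$, so that no element of $H$ has length strictly between $\Lambda''$ and the maximal height.

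The hard part will be the base-production step in the second assertion: converting the infinitesimal vertical leaf-connection between $x$ and $gx$ into an honest base of length exactly $\delta$. One must check that the relations of $\Omega_\sigma$ encountered along the vertical path can be reorganized through (ET1)--(ET5) so that the accumulated infinitesimal horizontal displacement $\delta$ is carried by one base, rather than being dispersed among cancellations that also involve maximal-height bases; here the regularity of the action and the leaf structure of $\Sigma$ are essential, and this is exactly the point at which the quadratic argument must be adapted to the linear setting. A more routine point, to be verified in the first assertion, is that collapsing the infinitesimal bases neither amalgamates distinct maximal-height bases nor lowers the minimal base-number of the recurrent equation $\overline{\Omega'}$.
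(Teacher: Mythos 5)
Your proposal follows essentially the same route as the paper's own (very terse) proof: the first assertion via the observation that the elimination steps depend only on the bases of maximal height, so they survive the projection to $\Lambda_1/\Lambda''$, and the second via placing $1$ and $g$ in leaves of $\Sigma$ at infinitesimal pseudo-distance $\delta$, extracting a base of length comparable to $\delta$ by elementary transformations, and invoking the standing hypothesis to conclude $\delta \in \Lambda''$. The ``hard part'' you flag --- reorganizing the vertical leaf-connection into a single base carrying $\delta$ --- is exactly the step the paper itself asserts without further detail, so your proposal is at the same level of completeness as the published argument.
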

\begin{proof} The first statement holds because the steps that we make in the process depend only on
the situation of bases of maximal height. If some product $g$ of bases not only from
${\mathcal M}_{\sigma 0}$ has infinitesimal length, then the identity and $g$ belong to leaves at
the infinitesimal distance $\delta$ in $\Sigma$. Therefore using elementary operations we can obtain
a base $\mu$ of length comparable to $\delta$. Therefore $\delta \in \Lambda''$.
\end{proof}

\begin{lemma}
\label{lin1}
Let $\overline \sigma$ be a closed section corresponding to infinite linear elimination process for
a free group.  Suppose the process of linear elimination on $\overline\sigma$ goes infinitely and
some generalized equation $\Omega$ (with the minimal possible number of bases) appears in this
process infinitely many times. Suppose all items have length in ${\mathbb R}$. Then applying
elementary transformations we cannot decrease the difference $k$ between the number of items and
the total number of basic and boundary equations. This difference is the rank of the free group
that is the coordinate group of $\Omega$.
\end{lemma}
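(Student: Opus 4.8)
The plan is to establish the two assertions of the lemma in turn: that the combinatorial number $k$ equals the rank of the free group $G_\Omega=\langle \mathcal M\mid \Omega\rangle$, and that no elementary transformation lowers it. Write $\rho$ for the number of items and $e$ for the total number of basic and boundary equations, so that $k=\rho-e$. I would first translate $k$ into a homological invariant. The presentation $\langle h_1,\dots,h_\rho\mid \Omega\rangle$ has a standard $2$-complex $P$ with a single vertex, $\rho$ edges (one per item) and $e$ two-cells (one per basic or boundary equation), so that $\partial_1=0$ and $\chi(P)=1-\rho+e=1-k$. Since $G_\Omega$ is free of some rank $n$, we have $H_0(P)=\mathbb Z$ and $H_1(P)=\mathbb Z^{n}$, whence
\[
n=k+\operatorname{rk}H_2(P)\ \geqslant\ k .
\]
Writing $\partial_2\colon\mathbb Z^{e}\to\mathbb Z^{\rho}$ for the abelianized relation matrix, one has $H_2(P)=\ker\partial_2$, so equality $k=n$ holds exactly when $\partial_2$ is injective, i.e.\ when the abelianizations of the basic and boundary equations are linearly independent.

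For the reverse inequality I would exploit the linear-elimination structure. The hypothesis that all items have length in $\mathbb R$ places us in the archimedean situation of Lemma \ref{lin}, where the process of linear elimination on $\overline\sigma$ runs infinitely and $\Omega$ recurs with the minimal number of bases. Each reduction step of this process is realized by Tietze transformations that trade a single defining equation for a single item (the basic equation $s_\mu=1$ attached to an eliminable base $\mu$ is solved for an item $h_i$ with $\gamma(h_i)=1$, and both are deleted); such a step lowers $\rho$ and $e$ simultaneously by one and therefore leaves $k=\rho-e$ unchanged. By Lemma \ref{lin-el}(c) the process exhibits $G_\Omega$ as a free group $F(K)$ on the explicit set $K$ of surviving items, and a count of that set gives $|K|=\rho-e=k$, hence $n=\operatorname{rank}G_\Omega\leqslant k$. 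Combined with the homological bound this yields $n=k$ and, equivalently, injectivity of $\partial_2$. I expect this independence step to be the main obstacle: I must rule out that some basic or boundary equation is a consequence of the others, which would force $\operatorname{rk}H_2(P)>0$ and $k<n$. The natural argument is by minimality --- a nontrivial linear dependence among the rows of $\partial_2$ would allow one to discard a base (or a matched pair) without changing $G_\Omega$, contradicting that $\Omega$ already has the minimal possible number of bases --- but turning ``homological dependence'' into ``a removable base'' is the delicate point and is where I would spend most of the effort.

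Finally, for the statement that $k$ cannot be decreased, I would use that every elementary transformation occurring in the linear case induces an isomorphism of coordinate groups (as recorded for (ET1)--(ET4), and for (ET5) in the regular setting), so the rank $n$ of $G_\Omega$ is an invariant throughout. The homological bound gives $k'\leqslant n$ after any transformation, while the first part gives $k=n$ for every equation in standard form, since standard form (obtained by applying (ET3), (D3), (D4) and the Tietze cleaning (D7) before any other move) contains no matched or otherwise redundant base and hence keeps $\partial_2$ injective. Thus any transformation that momentarily lowers $\rho-e$ must introduce a redundant equation, which the cleaning step removes, restoring $k=n$. Consequently $k$ stays equal to the invariant $n=\operatorname{rank}G_\Omega$ and in particular cannot be decreased, which is exactly the assertion of the lemma.
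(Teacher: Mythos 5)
Your proposal is correct and its load-bearing step coincides with the paper's own proof: cut the bases of $\Omega$ along the boundary connections and remove eliminable bases one at a time, each removal trading exactly one item for one equation so that $k=\rho-e$ is preserved, and since on this free-group section all bases must eventually be removed, the terminal presentation has $k$ generators and no relations, giving $\operatorname{rank}G_\Omega=k$, an invariant of the elementary transformations. Your homological wrapper is harmless but superfluous, and the step you flag as the main obstacle (injectivity of the abelianized relation matrix $\partial_2$) is vacuous: once your Tietze chain ends in a relation-free presentation on $k$ items you already have $n=k$, hence $\operatorname{rk}H_2(P)=0$, so no separate ``dependence implies a removable base'' argument is needed.
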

\begin{proof} Every equation $\Omega '$ that is obtained from $\Omega$ by elementary operations
has the free group of rank $k$ as the coordinate group. If we cut bases of $\Omega'$ along boundary
connections and start removing eliminable bases, we have to remove them all. Every time when we
remove eliminable base we express one item in terms of the others. There must be $k$ items left at
the end.
\end{proof}

Lemma \ref{lin1} implies that if $\sigma$ is a closed section corresponding to infinite linear
elimination process for a free group, then a base  $\mu$ with $\hat\ell(\mu) = 0$ can only be
obtained by elementary transformations from bases in ${\cal M}_{\sigma 0}$. Therefore, by Lemma
\ref{lin}, if $H = \langle {\mathcal M}_{\sigma} \mid \Omega_\sigma ({\mathcal M}_\sigma) \rangle$,
and $g \in H, \hat\ell(g) = 0$ then $\ell(g) \in \Lambda''$.

Finally, if a finitely generated abelian group $H$ acts freely on a $\Lambda$-tree, then it is
isometrically embedded into $\Lambda$ (see, for example, \cite{B}).
\begin{lemma}(\cite{Ch1}, Lemma 1.1.6) If $H$ is a finitely generated ordered abelian group, it has a decomposition $H =\Lambda _1\bigoplus\ldots\bigoplus\Lambda _k$ for some $k\geq 0$, where the ordering is the lexicographic ordering and $\Lambda _1,\ldots \Lambda _k$ are subgroups of ${\mathbb R}.$
\end{lemma}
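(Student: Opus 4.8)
The plan is to construct the decomposition from the chain of convex subgroups of $H$ together with Hölder's theorem. Since $H$ carries an order it is torsion-free (if $h > 0$ and $n > 0$ then $nh > 0 \neq 0$), and being finitely generated it is free abelian of some finite rank $n$, so $H \otimes_{\mathbb Z} \mathbb Q \cong \mathbb Q^n$. Recall that a subgroup $C \leqslant H$ is \emph{convex} if $0 \leqslant a \leqslant c$ with $c \in C$ forces $a \in C$. First I would record two elementary facts. The convex subgroups of $H$ are totally ordered by inclusion: if $C,D$ are convex and $c \in C$, $d \in D$ are positive with, say, $c \leqslant d$, then convexity of $D$ gives $c \in D$, and running over all such pairs forces $C \subseteq D$ or $D \subseteq C$. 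Moreover every convex subgroup $C$ is isolated (pure): if $na \in C$ with $n > 0$ then, replacing $a$ by $-a$ if needed, $0 \leqslant a \leqslant na \in C$ yields $a \in C$; in particular $H/C$ is torsion-free.

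Because convex subgroups are isolated, each strict inclusion in a chain of convex subgroups raises the $\mathbb Q$-rank by at least one, so that rank $n$ bounds the length of any such chain. I would therefore fix a maximal chain
$$0 = C_0 \subsetneq C_1 \subsetneq \cdots \subsetneq C_k = H, \qquad k \leqslant n.$$
By the order-preserving correspondence between convex subgroups of the quotient $C_i/C_{i-1}$ and convex subgroups of $H$ lying between $C_{i-1}$ and $C_i$, maximality of the chain means each $C_i/C_{i-1}$ has no proper nontrivial convex subgroup; equivalently $C_i/C_{i-1}$ is Archimedean. Hölder's theorem then supplies an order-preserving embedding of $C_i/C_{i-1}$ into $(\mathbb R,+)$, and I set $\Lambda_i$ to be its image, a subgroup of $\mathbb R$. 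Note $\Lambda_i$ is a finitely generated subgroup of $\mathbb R$, hence torsion-free and free abelian.

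It remains to assemble these pieces into an \emph{ordered} direct sum. Since each $\Lambda_i \cong C_i/C_{i-1}$ is free abelian, every short exact sequence $0 \to C_{i-1} \to C_i \to \Lambda_i \to 0$ splits (equivalently, $C_{i-1}$ is pure, hence a direct summand, in the finitely generated free group $C_i$); iterating upward produces a group isomorphism $H \cong \Lambda_1 \oplus \cdots \oplus \Lambda_k$ under which $C_i$ corresponds to the partial sum $\Lambda_1 \oplus \cdots \oplus \Lambda_i$. The step I expect to be the crux is verifying that, under this identification, the order on $H$ is exactly the lexicographic order in which the top quotient $\Lambda_k = H/C_{k-1}$ is the dominant coordinate (this is the right lexicographic convention used in Corollary \ref{chis-cor}). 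This is forced by convexity: for $h \in H$ whose image $\bar h$ in $\Lambda_k$ is nonzero, the sign of $h$ agrees with the sign of $\bar h$, because $h$ lies outside the convex subgroup $C_{k-1}$ and therefore dominates every element of $C_{k-1}$; if $\bar h = 0$ then $h \in C_{k-1}$ and one repeats the argument one level down using convexity of $C_{k-2}$, and so on. Thus the sign of any $h$ is read off from its highest nonzero coordinate, which is precisely lexicographic positivity, completing the proof.
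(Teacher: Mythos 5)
Your proof is correct and is essentially the argument behind the paper's citation: the paper gives no proof of this lemma at all, merely the reference to Chiswell \cite{Ch1}, Lemma 1.1.6, whose proof is exactly your scheme --- a chain of convex subgroups with Archimedean successive quotients, H\"older's theorem to embed each quotient into $(\mathbb{R},+)$, freeness of the finitely generated torsion-free quotients to split the extensions, and convexity to identify the resulting order as lexicographic. The only cosmetic difference is that Chiswell inducts on the maximal proper convex subgroup while you fix a maximal convex chain and assemble everything at once; both are the same argument.
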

Therefore, $H$ has a free basis $h_1,\ldots h_k$ such that $ht (h_1)>\ldots ht (h_k)$. We can now make all the components in the length of the basis elements zeros except those corresponding to $ht (h_1),\ldots , ht (h_k)$. This defines a
free length function in ${\mathbb R}^n$ on $H$. The new lengths of elements in $H$ satisfy the same system of linear equations as their lengths in
$\Lambda$.

Using induction on the number of levels obtained in the Elimination process, we  similarly prove
the statement of Theorem \ref{th:main3}.

The points of an ${\mathbb R}^n$-tree, where $G$ acts freely are the leaves in the foliation
corresponding to the new length of bases in ${\mathbb R}^n$. The new lengths of bases are exactly
their Lyndon lengths. $\Box$

\smallskip

{\em  Proof of Theorem \ref{th:main4}}. Notice that in the case when $\widetilde G$ is a finitely
presented group with a free length function in $\Lambda$ (not necessary regular) it can be embedded
in the group with a free regular length function in $\Lambda$ by Remark \ref{nonreg}. That group can
be embedded in $R(\Lambda', X)$. When we make a generalized equation for $\widetilde G$, we have to
add only a finite number of elements from $R(\Lambda', X)$. We run the elimination process for this
generalized equation as we did in the proof of Theorem \ref{th:main1} and obtain a group $G$ as in
Remark \ref{nonreg1}, where $\widetilde G$ is embedded, and then redefine the length of elements of
$G$ in ${\mathbb R}^n$ as above. Therefore $G$ acts freely and regularly on a ${\mathbb R}^n$-tree.
The theorem is proved.

Moreover, using Lemmas \ref{quad} and \ref{lin}, and induction one shows that the length function in
${\mathbb R}^n$ and in $\Lambda$ defined on $G$ is regular. Therefore we proved Theorem \ref{th:main4}.

\begin{remark}
If $\widetilde G$ is a finitely presented group which has a free (not necessary regular) length
function in $\Lambda$, then it acts freely on a $\Lambda$-tree, and one can construct, as in \cite{BF},
a foliated band complex with measured foliation with horizontal measure in $\Lambda$.
\end{remark}

\end{document}